\title{Game Semantics and the Geometry of Backtracking: a New Complexity Analysis of Interaction
%New Bounds on the Length of Interactions
%Game Semantics and the Geometry of Backtracking: On the Tower of Exponentials %\footnote{This work was partially supported by someone.}
}
\titlerunning{Game Semantics and the Geometry of Backtracking} %optional, in case that the title is too long; the running title should fit into the top page column
\author{Federico Aschieri \footnote{This work was funded by the Austrian Science Fund FWF Lise Meitner grant M 1930--N35 and previously supported by the FWF grant P25160--N25.}}
\affil{Institut f\"ur Diskrete Mathematik und Geometrie\\Technische Universit\"at Wien\\ Wiedner Hauptstra\ss e 8-10/104, 1040, Vienna, Austria}
\authorrunning{} %mandatory. First: Use abbreviated first/middle names. Second (only in severe cases): Use first author plus 'et. al.'
\newcommand{\comment}[1]{}
\newcommand{\EM}                       { {\mathsf{EM}} }
\newcommand{\pview}[1]   {{\ulcorner #1 \urcorner}}
\newcommand{\oview}[1]   {{\llcorner #1 \lrcorner}}
\newcommand{\efrom}[1]   {{\nwarrow #1}}
\newcommand{\cross}    {{\, \curvearrowleft\, }}
\newcommand{\blev}          {{\mathsf{Blevel}}}
\newcommand{\inact}          {{\,\medtriangleleft\,}}
\newcommand{\com}[2]    {{\mathsf{comp}(#1, #2)}}
\newcommand{\comu}[2]    {{\mathsf{comp}_{#1}(#2)}}
\newcommand{\trim} [2]         {{#2}^{#1}}
\newcommand{\on}         {{\varowedge}}
\newcommand{\off}         {{\varovee}}
\theoremstyle{plain}
\newtheorem{proposition}[theorem]{Proposition}
\begin{document}
\maketitle
\begin{abstract}
%We present new abstract complexity results about Coquand and Hyland-Ong game semantics, that will lead to new bounds on the length of cut-elimination, normalization and dialogues between expansion trees in first-order logic. In particular, we provide a novel method to bound the length of interactions between visible bounded strategies and to measure precisely the tower of exponentials defining the worst-case complexity.  Our study improves the old estimates by possibly several exponentials. 
We present abstract complexity results about Coquand and Hyland-Ong game semantics, that will lead to new bounds on the length of first-order cut-elimination, normalization, interaction between expansion trees and any other dialogical process game semantics can model and apply to. In particular, we provide a novel method to bound the length of interactions between visible strategies and to measure precisely the tower of exponentials defining the worst-case complexity.  Our study improves the old estimates on average by several exponentials. 

%We present new abstract complexity results about Coquand and Hyland-Ong game semantics, that will lead to new bounds on the length of cut-elimination and normalization in first-order logic. In particular, we bound the length of interactions between visible strategies in a novel way, which allows to measure precisely the tower of exponentials that defines the worst-case complexity.  Our study improves the old estimates by possibly several exponentials. 
%In particular, we describe a novel way to bound the length of interactions between visible strategies, which improves the old estimates by possibly several exponentials.
\end{abstract}
\section{Introduction}

Of all towers of the world, the one we would contemplate far more admiringly if it were lower rather than higher, is a tower in logic. It is the stack of exponentials defining the worst-case complexity of cut-elimination, normalization and dialogues between strategies interpreting  first-order proofs \cite{Beckmann, Schwichtenberg}:
$$\underbrace{2^{2^{.^{.^{2^{k}}}}}}_{r+1}$$
The tower's height is measured by the rank $r$ of the proof, which is in sequent calculus the highest among the logical complexities of the proof cut-formulas and in natural deduction the highest among the type complexities of the proof redexes; $k$ is just  the proof height. Given the use of complex cuts and redexes in logical derivations, it does not take too much to reach astronomical worst-case bounds.

Yet, thanks to the famous Curry-Howard correspondence \cite{Wadler}, we know that logical proofs, both in natural deduction \cite{AschieriZH,Krivine,Parigot} and in sequent calculus \cite{Herbelin,CurienHerbelin}, can be seen as programs of some typed functional programming language. And in the New York of a programming language, many inhabitants are indeed to be found on the highest floors of the highest skyscrapers, but the vast majority of them live in far lower buildings. That's programming: one can write enormously complex programs, but also very simple and efficient ones. 
%Surely, when programming, one can write enormously complex programs, but also very simple and efficient ones. 
Having complexity bounds only for the very worst case of the very worst program is of no use when one looks for a more precise idea of how much time it takes a particular program to terminate its execution. Unless one is terribly unlucky, his actual programs will take much less time than the worst possible program to complete their computations. How much less? We need to measure precisely the tower of exponentials, because it is its height that is to blame for the unrealistic bounds it computes.

%As we will show in this paper, 
The reason why the worst-case analysis of cut-elimination fares so roughly on average is the  complexity measure adopted: the rank of the proof is a superficial information, in a literal sense.
%The reason why the worst-case analysis of cut-elimination is so rough on average is that one uses as measure of complexity the rank of the proof, which is a very superficial information, in a literal sense. 
Indeed, one looks only at the surface of a cut, the cut-formula, without caring about how the formula is used in the proof. It is like trying to determine the maximum speed of a car just by studying its outward appearance, while ignoring completely its internal engine! But what is the engine of a cut? What is the real responsible of the worst-case complexity bounds, the mechanism that determines the height of the tower of exponentials?
\vspace{-3ex}
\subsection{Coquand and Hyland-Ong Game Semantics}
It turns out that the laws governing the complexity of cut-elimination are written in a game semantical language. If we want to  answer  our questions, we have no choice: we must translate them in terms of games and strategies. In the quest for a better complexity measure of cut-elimination and normalization, the single most important tool is then \emph{game semantics}. Game semantics was in the background of logic at least since Herbrand's Theorem: this is clear by Miller's formulation of it using expansion trees \cite{Miller}, that is, game strategies, as remarked by Heijltjes \cite{Heijltjes}. But the idea of interpreting formulas as games and proofs as winning strategies, appeared time and again in the history of logic: in Gentzen's and Novikoff's proofs of consistency of Arithmetic (see \cite{Coquand}), in G\"odel's and Kreisel's functional interpretations (see \cite{Kreiselm, AschieriZMP, OlivaPowell}), in Lorenzen's game semantics (see \cite{Felscher}).

 The old, \emph{statical} game semantics was brought to a new life with the invention of the modern, \emph{dynamical} game semantics, initiated by Coquand's \cite{Coquand0, Coquand} discovery that cut-elimination in Arithmetic mirrors
 %correspond to 
  a debate between two strategies. As opposed to ordinary games, players may every now and then revise their moves and backtrack to a previous position. Coquand proved by elegant combinatorial means that the dialogues between strategies interpreting proofs always terminate, thus proving as well the termination of cut-elimination, as  later shown by Herbelin \cite{Herbelin}. %Game semantics' success lies in forgetting inessential aspects of proofs, while illuminating their important geometrical features. 
 %Coquand game semantics forgets inessential aspects of proofs and illuminate their important geometrical features: that is why it was immediately successful. 
 The advantage of game semantics is that it ignores inessential aspects of proofs and turns the spotlight to their geometrical structure.

 %The old, \emph{statical} game semantics was brought to a new life with the invention of the modern, \emph{dynamical} game semantics, initiated by Coquand's \cite{Coquand0, Coquand} discovery that cut-elimination in Arithmetic correspond to  a debate between two strategies. Players may every now and then revise their moves and backtrack to a previous position. Coquand proved by elegant combinatorial means that the dialogues between strategies interpreting proofs alway terminate, thus proving as well the termination of cut-elimination, as  later shown by Herbelin \cite{Herbelin}. The advantage of game semantics is that one can forget about inessential aspects of proofs and concentrate only on their important combinatorial features. 

Hyland and Ong \cite{HO} took Coquand-style game semantics to a new level. The key move is realizing that one can drop altogether winning conditions:
%the idea, used by Coquand, that debates
debates need not always terminate with the win or the loss of one of the two players. Game semantics can in such a way model pure dialogues, exchanges of answers and questions, as those occurring during the evaluation of a functional program. The linear head reduction strategy of typed lambda terms can thus be modeled as a debate between strategies \cite{DRHerbelin}.
Thanks to these and several other insights, Hyland and Ong discovered how to construct fully abstract models for typed functional languages  (see also \cite{Abramsky, AbramskyMcCusker}). 

As plays correspond to cut-elimination \cite{Herbelin} and normalization \cite{DRHerbelin}, one encounters in game semantics the same complexity estimates.
%because bounding the length of plays is  bounding the length of the cut-elimination  and normalization  processes. 
The rank of a cut-formula becomes the depth of the game arena, and the very same tower of exponentials that bounds cut-elimination, bounds also the dialogues between strategies corresponding to first-order proofs. 
The result was proved by Clairambault \cite{Clairambaultconf,Clairambaultjour};  it constitutes an important demonstration that game semantics may serve as a viable complexity analysis tool.  Nevertheless, the depth of game arenas is not a precise complexity measure for the same reasons we discussed above: it corresponds just to the rank of cut-formulas. %Therefore, we shall radically depart from Clairambault's techniques, and we will start instead from the observation that his result also follows by means of Coquand's original termination proof \cite{Coquand}, one of the stepping stones leading to our main theorems. 

It  may appear that game semantics is leading us nowhere. For the moment we hardly achieved anything but to translate doubts and questions from one language to another. What is the engine of a strategy? How to measure its complexity? %What is a good complexity measure for the tower of exponentials bounding the length of interactions between strategies?
 We are missing a piece of the puzzle and to get it, we need to make two long jumps: one backward and the other forward.
%For the moment we simply moved from one language to another, but our doubts remains still.  What is the engine of a strategy? What is a strategy's good complexity measure?
%We shall radically depart from Clairambault's techniques, and we will start instead from the observation that his result also follows by means of Coquand's original termination proof \cite{Coquand}, one of the stepping stones leading to our main theorems. 
\vspace{-2ex}
\subsection{Back to Coquand's Simple Backtracking}
Debates in game semantics can be extremely involved; proving their termination, like escaping from an intricate labyrinth with no Ariadne's thread at hand.
%you just won't find your way out.  
%Coquand eventually overcame these difficulties with the invention of a good notation: pointer sequences. 
Coquand eventually overcame these difficulties by inventing a notation with deep geometrical properties: pointer sequences.
 But in a preliminary effort  \cite{Coquand0} to establish his successive termination result \cite{Coquand}, he settled for a weaker one: that debates between \emph{simple} backtracking strategies, now called \emph{$1$-backtracking strategies}, always terminate. 
%Debates in game semantics can be extremely intricate; proving their termination, a strenuous  fight. In a preliminary effort \cite{Coquand0} to simplify the problem and establish his successive termination result \cite{Coquand}, Coquand settled for a weaker one: that debates between \emph{simple} backtracking strategies, now called \emph{$1$-backtracking strategies}, always terminate. 
According to Coquand, a $1$-backtracking strategy ``never changes its mind about a value it has previously considered wrong''. This kind of strategies are by design less involved than general ones and their debates are more predictable; that allowed Coquand to get away at least with some result from his first struggle. 
\vspace{-2ex}
\subsection{Berardi-de'Liguoro Backtracking Level of a Strategy}

The idea of classifying strategies according to some complexity measure had an extraordinary potential, yet it was completely abandoned.  Only much later, did Berardi and de'Liguoro \cite{BerBack}  generalize the concept of $1$-backtracking to the more general \emph{$n$-backtracking} for any $n\in\mathbb{N}$. It was a major conceptual advancement.
%It took a lot of time until Berardi and de'Liguoro \cite{BerBack}  generalized the concept of $1$-backtracking to the more general $n$-backtracking: it was a major conceptual advancement. 
Their definition is inspired by Gold's learning theory \cite{Gold,Jain} and by the  Berardi-Coquand-Hayashi \cite{BerCoqHay} notion of $n$-backtracking game  -- an alternative to Coquand and Hyland-Ong games up to now only partially explored. However, it was not at all obvious how to formulate a notion of $n$-backtracking strategy in Coquand and Hyland-Ong games as well, which is what they did.\\
%The result has been proved by Clairambault \cite{Clairambaultconf,Clairambaultjour};  it also follows by means of Coquand's original proof of termination \cite{Coquand}, which is one of the stepping stones leading to our main theorems. Again, the depth of game arenas is not a precise complexity measure for the same reasons we discussed above: it corresponds just to the rank of cut-formulas. %In any case, because of its elegance and its capability of representing cut-elimination,  game semantics will be the setting where we shall carry out our analysis.
%However, Clairambault's result was an important demonstration that game semantics can be effectively used as a complexity analysis tool. 
%It took a lot of time until Berardi and de'Liguoro \cite{BerBack}  generalized the concept of $1$-backtracking to the more general $n$-backtracking: it was a major conceptual advancement. 
The definition is logically complex and has a geometrical and temporal flavour, but its meaning is simple to grasp. For example, a strategy is $2$-backtracking if it is allowed to change its mind about previous mind changes of level $1$. That reminds us of some juridical systems: only a judge of second level can change the decision of a judge of level one. Thus a judge of an appeal court can very well revise an ordinary judge's decision, but two ordinary judges cannot question each other's sentences. With Coquand-Hyland-Ong pointer notation, a mind change of level $1$ is represented by a move pointing backward: 
$$\begin{tikzpicture}
\node (a) at (0,0) {$\bullet_1$};
\node at (1,0)  {$\ldots$};
\node (c) at (2,0) {$\circ_1$};
\draw[-latex,bend right]  (c) edge (a);
\end{tikzpicture}$$
\vspace{-1.5ex}
A mind change of level $2$ can be depicted as a crossing: 
%That reminds us of some juridical systems: a judge of the Appeal Court can very well revise an ordinary judge's decision, because the first occupies an higher level in the hierarchy than the second one. 
$$\begin{tikzpicture}
\node (a) at (0,0) {$\bullet_1$};
\node at (1,0)  {$\ldots$};
\node (b) at (2,0) {$\bullet_2$};
\node at (3,0)  {$\ldots$};
\node (c) at (4,0) {$\circ_1$};
\node at (5,0)  {$\ldots$};
\node (d) at (6,0) {$\circ_2$};
\draw[-latex, bend right]  (d) edge (b);
\draw[-latex,bend right]  (c) edge (a);
\end{tikzpicture}$$
Though they are \emph{not} physically removed, we consider the moves between the ending points of a pointer to be \emph{ideally} erased by the mind change. Thus, the move $\circ_1$ is a first mind change erasing the moves before itself and after $\bullet_{1}$; the move $\circ_2$ \emph{is a mind change about the first mind change} and recovers one of the formerly retracted moves, $\bullet_{2}$. There is a proviso: for the move $\circ_{2}$  to be considered a \emph{real} mind change of level $2$, at the moment $\circ_{2}$ is played the pointer from $\circ_{1}$ must be \emph{active}: a pointer is active if it is crossed only by inactive pointers; a pointer is inactive if there is an active pointer crossing it. By considering mind changes about mind changes about mind changes... one arrives at the concept of $n$-backtracking. A $n$-backtracking strategy has backtracking level $b=n$. 
%Though they are \emph{not} physically removed, the moves between the ending points of an edge are considered to be \emph{erased} by the mind change. Thus, the move $\circ_1$ erases the moves before itself and after $\bullet_{1}$; the move $\circ_2$ \emph{change its mind about its first mind change} -- if the first mind change is still active -- and recover some of the formerly retracted moves. By considering mind changes about mind changes about mind changes... one arrives at the concept of $n$-backtracking.A $n$-backtracking strategy has backtracking level $b=n$. The adjective \emph{active} is crucial here: the retraction relation is not the mere crossing of edges, as in the figure.
We shall have to describe precisely the notion and familiarize extensively with it in the following. For the moment it is enough to say that the backtracking level is a measure of how much involved and intricate the geometrical structure of  a strategy is. The backtracking level is also easy to compute: if a strategy interprets a first-order proof or a simply typed lambda term, one can determine its backtracking level in plain quadratic time, and we conjecture linear. 
\vspace{-2ex}
\subsection{Backtracking Level as Complexity Measure}
%\subsection{New Bounds on the Length of Interactions}

We have  found the complexity measure we were looking for: it is the backtracking level of a strategy. We shall prove that the height of the tower of exponentials bounding the length of interactions between strategies is determined by the backtracking level of the strategies. The explanation of this complexity result is our paper's\\

\noindent\textbf{Fundamental Thesis}: \emph{the real engine of a strategy is its backtracking mechanism; the power of this engine is measured by the strategy's backtracking level; the higher the backtracking level, the more involved the geometrical structure of the backtracking mechanism is and the longer it will take to conclude a debate with the strategy}.\\

\noindent As a consequence of our thesis, at first we conjectured a weaker result than the one presented here, which was: if we consider two strategies with backtracking level respectively $n$ and $m$, $b=\mathsf{max}(m,n)$, and the size of the two strategies is bounded by $k$, then the complexity bound for the length of any interaction between them is 
\vspace{-1.5ex}
$$\underbrace{2^{2^{.^{.^{2^{k(\log k)2}}}}}}_{b+1}$$
This estimate already outperforms
%represents a progress over 
the old ones, because the backtracking level of a strategy is completely independent of the arena depth. A $2$-backtracking strategy can live in an arena of depth $1000$; then one has a  new bound given by a tower of exponentials of height $3$ versus an old one of height $1001$. Moreover, as we shall see, the backtracking level of a strategy is bound by the depth of the arena, modulo a trivial transformation. That explains perfectly why the rank of a proof worked as a worst-case complexity measure; in a sense, one looked at the right place, without knowing the good reason. %The difference between the old and our new measure can thus be enormous. 

\noindent Our fundamental thesis seemed  to suggest that our first conjecture was already optimal. For example, one would expect that when a $5$-backtracking strategy interacts with a $1$-backtracking one, the debate should take in general no less time than prescribed by a tower of exponential of height $6$. \\ 
We were struck when we found that the case it's not. 
The complexity bound in this situation is much tighter: a tower of height $2$, that is just an exponential
\vspace{-1.5ex}
$$2^{(k\log k)2}$$
And $1$-backtracking strategies are no  fortunate exceptions. If a $5$-backtracking strategy  interacts with a $2$-backtracking one, the complexity bound  is just a tower of height $3$, a double exponential:
\vspace{-1ex}
$$2^{2^{(k \log k) 2}}$$
More in general, given two strategies of backtracking level respectively $n$ and $m$ and of size less than $k$, we shall prove that the length of their interaction is determined by the \emph{minimum} among their backtracking levels, that is by $b=\mathsf{min}(m,n)$, and will be less than
\vspace{-1.5ex}
$$\underbrace{2^{2^{.^{.^{2^{k(\log k)2}}}}}}_{b+1}$$
We can interpret this result as a \emph{safety of interactions} result. No matter how unpredictable and complex the context in which a strategy is thrown in, the strategy will always have safe interactions, from the complexity point of view, whose rate of growth is determined by the strategy's own backtracking level. 
\vspace{-2ex}
\subsection{$n$-Backtracking, $n$-Quantifiers Excluded-middle $\EM_{n}$ and Learning}

One may acknowledge that indeed the concept of $n$-backtracking provides the means for a very refined analysis of the length of dialogues in logic, but still have the suspicions that the analysis is of no practical consequence. Are there around enough $1$-backtracking strategies, $2$-backtracking strategies and so on, or are they odd entities appearing, like the 29th of February, once in every four years of logical and mathematical practice?
Are almost all formulas of complexity $n$  going to have only $n$-backtracking winning strategies? The answer is no. In Arithmetic the landscape is already partially studied, thanks to the correspondence between the excluded middle $\EM_{n}$ over formulas with $n$ quantifiers and the concept of learning. In \cite{Aschierigames} it was shown, by means of Interactive realizability \cite{ABlmcs}, that every arithmetical proof using only $\EM_{1}$ as classical principle represents a winning recursive strategy for the related $1$-backtracking game in the sense of \cite{BerCoqHay}. In \cite{BerCoqHay} was proved -- in a less refined way -- that any  arithmetical formula derivable using $\EM_{n}$ as a classical principle, has a winning recursive strategy in $n$-backtracking games. As these results are very likely to hold also for first-order logic in the formulation of \cite{AschieriZH}, it is clear that strategies of low backtracking level are the norm rather than the exception. Moreover, as a matter of fact many theorems in Arithmetic are proved just by using $\EM_{1}$ and many theorems in Analysis with $\EM_{2}$.

The results of this work also definitely confirm a point that has been made several times \cite{ABlmcs,AschieriHAS,BerardiKnow}: proofs limited to $\EM_{1}$ or $\EM_{2}$ yield relatively  efficient strategies, and in general the \emph{learning theory}  is the correct one to understand and improve the complexity of programs  extracted from classical proofs. The surprise, here, is the connection with the complexity of intuitionistic proofs as well.   
\vspace{-2ex}
\subsection{Content of the Paper}
In this paper we establish the discussed bounds on the interaction lengths between visible bounded strategies in Coquand and Hyland-Ong game semantics. 
%The aim of this paper is to establish the discussed combinatorial result about the length of interactions between visible bounded strategies in Coquand and Hyland-Ong game semantics. 
They will certainly lead to new bounds also for cut-elimination, normalization and dialogues between expansion trees in first-order logic; but given the great generality of game semantics, our results are interesting in their own right. For space reasons, we will not study in detail concrete applications and we delay the matter to forthcoming papers. As explained in \cite{Clairambaultconf,Clairambaultjour, DRHerbelin}, however, bounds in game semantics automatically translate into bounds for linear head reduction in typed lambda calculus. There is also Herbelin's result \cite{Herbelin}, showing a step-by-step correspondence between plays in game semantics and the weak head reduction cut-elimination strategy in an infinitary sequent calculus for Arithmetic. The situation in first-order logic is a bit subtler, because atomic formulas are undecidable, thus winning conditions must be different; yet we do not see major obstacles to adapting Herbelin's methods also to that setting. 

We devote the introductory part of the paper to recalling the game semantical ideas of Coquand, Hyland-Ong and Berardi-de'Liguoro, with new examples and viewpoints that we feel are essential to understand the technical part of the paper. In particular, in Section \S\ref{sec-CHO}, we describe the main concepts of Coquand and Hyland-Ong game semantics and study an example based on Miller's expansion trees. In Section \S\ref{sec-BacktrackingLev}, we explain in detail the Berardi-de'Liguoro notion of backtracking level of a strategy. In Section \S\ref{sec-proofidea}, we outline our proof plan, which is carried out in detail in Sections \S\ref{sec-maxmin}, \S\ref{sec-trimm}, \S\ref{sec-bounds}, \S\ref{sec-gadget}.

\vspace{-2ex}
\section{Coquand-Hyland-Ong Game Semantics}\label{sec-CHO}
As Wadler \cite{Wadler} put it, ``Curry-Howard is a double-barrelled name which ensures the existence of other double-barrelled names''. Since logical proofs are isomorphic to computer programs, many concepts happen to be discovered twice: once in logic, once in computer science. Girard-Reynolds system $F$ is an example, Coquand-Hyland-Ong game semantics is another. Actually, most literature on game semantics fails to mention that Coquand and Hyland-Ong game semantics are by no means different concepts but just \emph{one} semantics that turns out to have several applications:
%they are just the same thing! Where the two semantics diverge is in the applications: 
cut-elimination in the case of Coquand, denotational semantics for typed lambda calculus  in the  case of Hyland-Ong. Since the latter has in turn as many variants as functional languages out there, to avoid the \emph{one, none, one hundred thousand} Pirandellian fragmentation, we adopt the most abstract Hyland-Ong terminology (see \cite{Clairambaultjour}), calling it Coquand-Hyland-Ong game semantics.
%We start by recalling notions and concepts from modern treatments \cite{Curiennotes,Clairambaultjour} of Coquand and Hyland-Ong game semantics. We adopt Hyland-Ong terminology \cite{HO}, but it should be clear that, at the level of abstraction of this paper, there is no difference between the game semantical concepts presented here and  Coquand's \cite{Coquand}. Where the two semantics really diverge is not in the abstract framework, but in the applications: cut-elimination in the case of Coquand, denotational semantics for typed lambda calculus  in the  case of Hyland-Ong.

%Of course, the ramifications of game semantics arise from its different applications: cut-elimination in the case of Coquand, model theory for typed lambda calculus  in the  case of Hyland-Ong.

Arenas are the structures where games take place. As in many real-life two-player games, there are a set of moves $M$ and a binary relation of justification $\vdash$ between moves. The elements of $M$ may represent elementary actions of a player, like moving a single piece in chess or choosing a witness in logical dialogues; but they may also encode complex positions, like the entire chessboard or a full logical formula. Often a game can be formalized in both ways. The relation $\vdash$ tells whether a move is a direct answer to another one or whether from a configuration of the game a player can pass to another. Every move is labeled by a function $\lambda$ as either Player or Opponent move, and there are special initial moves that start the game.    

%Arenas are the structures where games take place. As in any real life classical two player game, there is a set of moves $M$, which represent single moves or complex positions and configurations of the game, such as the chessboard in chess or a formula in logical dialogues, and there is a binary relation of justification $\vdash$ between moves, which tells whether from a configuration of the game a player can pass to another. Every move is labeled by the function $\lambda$ as either Player move or Opponent move, and there are special initial moves that start the game.    

\begin{definition}[Arenas]
An \textbf{arena}  $\mathbb{A}$ is a structure $(M, \vdash, \lambda, I)$ such that:
\begin{itemize}
\item
$M,\vdash$ is a graph: $M$ is a set of nodes, called \textbf{moves}, and $\vdash$ is a binary relation over $M$, called \textbf{enabling}. If $m\vdash n$, then $m$ is said to \textbf{justify} $n$ and $n$ is said to \textbf{answer} $m$. 
\item $\lambda: M\rightarrow \{P,O\}$ is a function indicating whether any move $m$ is a Player move or an Opponent move and is such that for every $m,n\in M$, $m\vdash n \implies \lambda(m)\neq \lambda(n)$, which is called the \textbf{alternation condition}.
%\item $I\subseteq M$ and there is a special move $*\in I$ called \textbf{initial move} and is such that  $\lambda(*)=O$.
\item $I$ is a subset of $M$ called the set of \textbf{initial moves} and is such that for every $m\in I$, $\lambda(m)=O$. We assume $*\in I$.
\item A \textbf{simple play} over $\mathbb{A}$ is any sequence of moves $m_{1}\, m_{2}\, m_{3} \ldots m_{i}$ such that $m_{1}\in I$ and $$m_{1}\vdash m_{2} \vdash m_{3}\vdash\cdots \vdash m_{i}$$ and represents a backtracking free play.
\item $\mathbb{A}$ is said to be of \textbf{finite depth} whenever there exists a $k\in\mathbb{N}$ such that no simple play over $\mathbb{A}$ is of length greater than $k$; in this case, $k$ is said to be the \textbf{depth} of $\mathbb{A}$. \\
\end{itemize}
\end{definition}
When we shall want to talk about a player without specifying who he is, we will call him  player $p,q, \ldots$ and the other player will be $\overline{p}, \overline{q}, \ldots$; that is, if $p=P$, then $\overline{p}=O$ and if $p=O$, then $\overline{p}=P$. Often moves will be denoted with  white $\circ$ or black $\bullet$ circles, that may be indexed; in a play, Player and Opponent moves will always be represented with different colors.  Concatenations of sequences $s$ and $s'$ of moves will just be denoted as $s\, s'$. An \textbf{initial segment} of a sequence $s_{0}\, s_{1}\ldots s_{k}$ is any prefix  $s_{0}\, s_{1}\ldots s_{i}$, with $0\leq i \leq k$.

So far, we have described just a standard game: we have considered only \emph{simple} plays, where players move in an alternating fashion and each player immediately answers the last move of the opponent. A simple play in chess would be: $*$1.e4 e5 2.Nf3 Nf6 3.Nxe5, which means moving in order a white pawn to e4, a black pawn to e5, a white knight on f3, a black knight on f6 and taking the pawn on e5 with the white knight.  Graphically, with arrows representing the justification relation $\vdash$, one has:
\vspace{-1ex}
$$\begin{tikzpicture}
\node (0) at (-1, 0) {$*$};
 \node (x) at (0,0) {e4};
\node (a) at (1.5,0) {e5};
\node (b) at (3,0) {Nf3};
\node (c) at (4.5,0) {Nf6};
\node (d) at (6,0) {Nxe5};
\draw[-latex, right]  (x) edge (0);
\draw[-latex, right]  (a) edge (x);
\draw[-latex, right]  (b) edge (a);
\draw[-latex, right]  (c) edge (b);
\draw[-latex, right]  (d) edge (c);
\end{tikzpicture}$$
When interpreting dialogues in logic and computer science, however, it is necessary to allow players to delay answers, take back moves, backtrack to a previous opponent move and answer differently. All of that  can be represented by an act of answering a previous move with a pointer. For example, black, instead of answering the move 3.Nxe5, may change his mind and answer differently the move 2.Nf3 by placing his knight to c6:
\vspace{-2ex}
$$\begin{tikzpicture}
\node (0) at (-1, 0) {$*$};
 \node (x) at (0,0) {e4};
\node (a) at (1.5,0) {e5};
\node (b) at (3,0) {Nf3};
\node (c) at (4.5,0) {Nf6};
\node (d) at (6,0) {Nxe5};
\node (e) at (7.5, 0) {Nc6};
\draw[-latex, right]  (x) edge (0);
\draw[-latex, right]  (a) edge (x);
\draw[-latex, right]  (b) edge (a);
\draw[-latex, right]  (c) edge (b);
\draw[-latex, right]  (d) edge (c);
\draw[-latex, bend right]  (e) edge (b);
\end{tikzpicture}$$
Globally, there are now two plays on the board: $*$1.e4 e5 2.Nf3 Nf6 3.Nxe5 and $*$1.e4 e5 2.Nf3 Nc6. The play might go on like this
\vspace{-2ex}
$$\begin{tikzpicture}
\node (0) at (-1, 0) {$*$};
 \node (x) at (0,0) {e4};
\node (a) at (1.5,0) {e5};
\node (b) at (3,0) {Nf3};
\node (c) at (4.5,0) {Nf6};
\node (d) at (6,0) {Nxe5};
\node (e) at (7.5, 0) {Nc6};
\node (f) at (9, 0) {e5};
\node (g) at (10.5, 0) {d6};
\draw[-latex, right]  (x) edge (0);
\draw[-latex, right]  (a) edge (x);
\draw[-latex, right]  (b) edge (a);
\draw[-latex, right]  (c) edge (b);
\draw[-latex, right]  (d) edge (c);
\draw[-latex, bend right]  (e) edge (b);
\draw[-latex, right]  (f) edge (e);
\draw[-latex, bend right]  (g) edge (d);
\end{tikzpicture}$$
The plays  on the board are now: $*$1.e4 e5 2.Nf3 Nf6 3.Nxe5 d6 and $*$1.e4 e5 2.Nf3 Nc6 3.e5.
We can see in the example above the two different characters  that backtracking moves may possess:  the move d6 is a delayed first answer to Nxe5 and represents a mind change about which game line to play; the move Nc6 is a second answer to Nf3 and represents a mind change about how to answer a move that was already answered. As for our purposes it would be useless to take account of that difference, we shall regard each backtracking move as a mind change, without further distinctions.\\
Now we turn to a crucial concept, that of \emph{view}: a backtracking strategy needs to know what part of the game history to consider when deciding a new move. In any game, what really matters from the perspective of a player is: starting with a move, waiting for an answer,  making a move, waiting for an answer, making a move, waiting for an answer and so on...
% A \emph{visible} strategy, 
The kind of strategy that arises from the interpretation of proofs and programs, just needs this sequence of move-answer, move-answer, move-answer... in order to decide the next move. The view of white in the last game above is thus: $*$ e4 e5 Nf3 Nf6 Nxe5 d6. In general, to determine the current view, a player looks at the last move of the opponent, then follows the pointer to the move that was answered, then consider the immediately preceding move, then follows the pointer to another move and so on... until an initial move is reached. In the picture
$$\begin{tikzpicture}

%\node  at (0.5,0) {$\ldots$};
\node at (3,0.03)   {$\bullet$};
\node  (z) at (3.5,0) {$ \circ_{z}$};
\node  at (4,0) {$\ldots$};
\node (bk) at (4.5,0) {$\bullet_{z}$};
\node  at (5,0) {$\ldots$};
\node (ck-1) at (5.5,0) {$\circ_{3}$};
\node (z2) at (6.15,0) {$\ldots$};
\node (bk-1) at (6.7,0) {$\bullet_{3}$};
\node (c2) at (7.2,0) {$\circ_{2}$};
\node (z2) at (7.7,0) {$\ldots$};
\node (b2) at (8.2,0) {$\bullet_{2}$};
\node  (c1) at (8.7,0) {$\circ_{1}$};
\node  at (9.2,0) {$\ldots$};
\node  (b1) at (9.7,0) {$\bullet_{1}$};
%\draw[-latex,bend right]  (d) edge (z0);
\draw[-latex,bend right]  (b1) edge (c1);
\draw[-latex,bend right]  (b2) edge (c2);
\draw[-latex,bend right]  (bk) edge (z);
\draw[-latex,bend right]  (bk-1) edge (ck-1);
\end{tikzpicture}$$
the displayed moves, where $\bullet$ may or may not appear, represent the view of the player with the obligation to play after the last move $\bullet_{1}$.

\begin{definition}[Plays, Views]
Let $\mathbb{A}=(M, \vdash, \lambda, I)$ be an arena. 
\begin{itemize}
\item A finite sequence $s=s_{0}\, s_{1}\,\ldots\, s_{n}$ of moves of $M$ is said to be a \textbf{justified sequence} over $\mathbb{A}$ if $s_{0}$ is initial and every move $s_{i}$ of $s$, with $i>0$, is equipped with a pointer to a previous move $s_{j}$ justifying it: that is, $j< i$ and $s_j\vdash s_i$. 
\item A justified sequence $s=s_{0}\, s_{1}\,\ldots\, s_{n}$ is said to be a \textbf{play} over $\mathbb{A}$ if satisfies the alternation condition: for every $0\leq i< n$, $\lambda(s_i)\neq \lambda(s_{i+1})$. 

\item The pairs of moves $e=(\bullet, \circ)$ such that $\bullet\vdash \circ$ are called \textbf{edges}; $e$ is said to be the \textbf{edge from $\circ$} and is denoted as $\efrom{\circ}$; if $\lambda(\circ)=P$, $e$  is a \textbf{P-edge}, if $\lambda(\circ)=O$, $e$ is an \textbf{O-edge}.

\item The  \textbf{Player-view} $\pview{s}$ of a justified sequence $s$ is defined recursively as follow:
\begin{itemize}
\item $\pview{s\, \circ}=\pview{s}\, \circ\ $, if $\lambda(\circ)=P$.
\item $\pview{s\, \circ \, s'\, \bullet}=\pview{s}\, \circ\, \bullet\ $, if $\lambda(\bullet)=O$ and $\bullet$ points to $\circ$.
\item $\pview{\bullet}=\bullet$, if $\bullet\in I$ .
\end{itemize}
Therefore, whenever $s=\bullet_{0}\, \circ_{1}\, s_{1}\, \bullet_{1}\,  \circ_{2}\, s_{2}\, \bullet_{2}\ldots  \circ_{k}\, s_{k}\, \bullet_{k}$ and $\bullet_{0}$ is an initial move and for each $i\geq 1$, $\bullet_{i}$ is an Opponent move pointing to $\circ_{i}$, then $\pview{s}=\bullet_{0}\,\circ_{1}\, \bullet_{1}\,  \circ_{2}\, \bullet_{2}\ldots \circ_{k}\, \bullet_{k}$.

\item The \textbf{Opponent-view} $\oview{s}$ of a justified sequence $s$ is defined recursively as follow:
\begin{itemize}
\item $\oview{s\, \circ}=\oview{s}\, \circ\ $, if $\lambda(\circ)=O$.
\item $\oview{s\, \circ \, s'\, \bullet}=\oview{s}\, \circ\, \bullet\ $, if $\lambda(\bullet)=P$ and $\bullet$ points to $\circ$.
\item $\oview{s}=\epsilon$, if $s$ is the empty sequence $\epsilon$.
\end{itemize}
Therefore, whenever $s=\bullet_{0}\, s_{0}\,  \circ_{0}\, \bullet_{1}\, s_{1}  \circ_{1}\ldots  \bullet_{k}\, s_{k}\, \circ_{k}$ and $\bullet_{0}$ is an  initial move and for each $i\geq 0$, $\circ_{i}$ is a Player move pointing to $\bullet_{i}$, then $\oview{s}=\bullet_{0}\,\circ_{0}\, \bullet_{1}\,  \circ_{1}\ldots \bullet_{k}\, \circ_{k}$.\\

\end{itemize}
\end{definition}
Every play $s$ over  an arena $\mathbb{A}$ comes with a set of edges, representing the justification pointers. We say that an edge $e$ is in $s$ whenever $e$ is among the edges of $s$. As standard in the literature, we will not bother with how, formally, justification pointers are represented; it could be, for example, with a function $f: \mathbb{N}\rightarrow \mathbb{N}$ such that $f(x) < x$ for every $x>0$. However, we will just consider plays as sequences of moves with arrows pointing back to a previous move, as in the examples that we consider throughout the paper. In general, when taking subsequences of a play, we will maintain the arrows of the original play. We remark that, according to our definition, in any justified sequence every move different from the first points back to a previous move, like in \cite{Coquand, HO}; in particular, initial moves that occur in the middle of the play must obey the rule. There is no real technical need to enforce this restriction, but it is useful, for it allows to treat uniformly backtracking moves as those pointing before the immediately preceding move. Otherwise, an initial move in the middle of a play that does not point to anything should still be considered a backtracking to the start of the play, but this would not be recorded by an explicit pointer. In some literature, that behaviour is permitted;  but one can always put in front of  any play a pair of dummy moves, ``Let's begin'' and ``I'm ready'', the second of which justifies all Opponent moves, so every formerly initial move can point back to it.

 The definition of strategy is standard: it is a set of plays, those considered to ``follow'' the strategy; alternatively, a strategy for a player can be interpreted as a function, mapping any play $s$ ending with an adversary move to some suggested answers. A \emph{bounded} strategy is the kind of strategy that is supposed to interpret a simply typed lambda term or a first-order proof: there is a bound on the length that the view of any play in the strategy can have. The idea is that these strategies always stop playing after  a number of moves below a bound which is known in advance. In this paper we shall only be concerned with \emph{visible} strategies: those only needing a view as the history to refer back to with their answers. Visible sequences represent interactions between visible strategies. %We shall see in a moment an example. 
\begin{definition}[Visibility, Strategies, Interactions]\label{def-stratvisint}
Let $\mathbb{A}=(M, \vdash, \lambda, I)$ be an arena. 
\begin{itemize}
\item Let $s=s_{0}\, s_{1}\, \ldots s_{n}$ be a play over $\mathbb{A}$. $s$ is said to be \textbf{visible} if for all $0<i\leq n$, $s_{i}$ points to a move in $\pview{s_{0}\, s_{1}\, \ldots s_{i-1}}$ when $\lambda(s_{i})=P$ and $s_{i}$ points to a move in $\oview{s_{0}\, s_{1}\, \ldots s_{i-1}}$ when $\lambda(s_{i})=O$.
\item A \textbf{Player strategy} over $\mathbb{A}$ is any set  $\sigma$ of even-length plays over $\mathbb{A}$ such that if $s\, m\, n\in \sigma$, then $s\in \sigma$. $\sigma$ is said to be \textbf{visible} if for every $s\, m\in\sigma$, $m$ points to a move in $\pview{s}$.

\item An \textbf{Opponent strategy} over $\mathbb{A}$ is any set  $\sigma$ of odd-length plays over $\mathbb{A}$ such that if $s\, m\, n\in \sigma$, then $s\in \sigma$. $\sigma$ is said to be \textbf{visible} if for every $s\, m\in\sigma$, with $s$ non empty, $m$ points to a move in $\oview{s}$.

\item A visible Player strategy $\sigma$ is said to be \textbf{bounded} by $k$ if $k\in\mathbb{N}$ and for no $s\in\sigma$,  $\pview{s}$  is of length greater than $k$. A visible Opponent strategy $\sigma$ is said to be \textbf{bounded} by $k$ if $k\in\mathbb{N}$ and for no $s\in\sigma$,  $\oview{s}$  is of length greater than $k$. 

\item Let $\sigma$ and $\tau$ be respectively a strategy for Player and a strategy for Opponent over $\mathbb{A}$. The set of \textbf{interactions} $\sigma\star\tau$ is defined as 
$$\sigma \star \tau:=\{s\, m\ |\ (\lambda(m)=P\implies s\, m\in\sigma \land s\in\tau )\land (\lambda(m)=O\implies s\, m\in\tau\land s\in\sigma) \}$$ 
It represents a match between $\sigma$ and $\tau$, i.e. a play which follows both the strategies. \\
\end{itemize}
\end{definition}
The goal of this paper is to study the length of the interactions in $\sigma\star\tau$, whenever $\sigma$ and $\tau$ are bounded: by Coquand's termination Theorem \cite{Coquand}, in that case, $\sigma\star\tau$ contains only finite sequences. But first an example. 
\subsection{Example: Space, Time and Expansion Trees}

%Game semantics is not just concerned with debates between strategies; 
Game semantics does more than just describe debates between strategies;
it also sheds light on the roles that the very concepts of space and time play in proof representation. %In our everyday experience it is impossible to separate the two, yet in proof theory space and time have long lived apart. 
But what are space and time in logic?

Conventional wisdom suggests that proofs, on one hand, have a geometrical structure which is important to investigate: after all, a proof is a tree in a two dimensional \emph{space}. On the other hand, conventional wisdom suggests that the temporal process of constructing a proof is as important as its geometrical appearance: proofs pertain also to \emph{time}. Time is no issue: the act itself of drawing an inference takes place in time and to each logical step is possible to associate the moment in which it was made. But where exactly is the geometry of a proof to be sought? Here conventional wisdom is wrong. 

Think about a cut-free proof of a prenex formula in a one-sided sequent calculus. The geometrical structure of the proof tree is trivial: it is a linear sequence of $\exists$ and $\forall$ inferences introducing respectively first-order terms and eigenvariables; after those introductions, a propositional tautology is proved, but at that very point the proof becomes no longer interesting! The geometrical structure of a proof does not coincide with its tree structure: it is concealed in expansion trees.

Miller's expansion trees \cite{Miller} allow to formulate Herbrand's Theorem for what it is: a \emph{game semantical interpretation of proofs}. For each cut-free Tait-style sequent calculus proof of a sequent $\Gamma, A$, one can trace how the formula $A$ is used in the proof, keeping note  of what first-order terms $t$ the existential quantifiers of $A$ are instantiated to and what eigenvariables $a$ the universal quantifiers of $A$ introduce. The result is an expansion tree. For example, the formula $\exists x\,\forall y\,\exists z\, P(x,y,z)$ may have the following expansion tree:
$$\begin{tikzpicture}
\tikzstyle{level 1}=[sibling distance=35mm]
\tikzstyle{level 2}=[sibling distance=15mm]

\node{$\exists x\,\forall y\,\exists z\, P(x,y,z)$}
[grow=north]
child{node{$\forall y\,\exists z\, P(t_{3},y,z)$}  
child{node{$\exists z\, P(t_{3},a_{3},z)$} 
%child{node{$ P(t_{3},a_{3},t_{4})$} edge from parent node[left]{$t_{4}$}} 
edge from parent node[left]{$a_{3}$}} 
edge from parent node[left]{$t_{3}$}}
child{node{$\forall y\,\exists z\, P(t_{2},y,z)$} 
child{node{$\exists z\, P(t_{2},a_{2},z)$} edge from parent node[left]{$a_{2}$}} 
edge from parent node[left]{$t_{2}$}}
child{node{$\forall y\,\exists z\, P(t_{1},y,z)$} 
child{node{$\exists z\, P(t_{1},a_{1},z)$} 
child{node{$ P(t_{1},a_{1},t_{4})$} edge from parent node[left]{$t_{4}$}} 
edge from parent node[left]{$a_{1}$}} 
edge from parent node[left]{$t_{1}$}};
\end{tikzpicture}$$
The representation of a proof as a two dimensional object, like an expansion tree, makes prominent space at the expense of time; the representation of a proof as a linear sequence, like in Hilbert's systems, gives priority to time at the cost of space. How to represent at once space and time? 

We first transform a cut-formula of a proof under focus into an expansion tree; this operation extracts the geometrical information hidden in the proof. We then rearrange the expansion tree in linear form, according to the order in which its nodes were introduced in the proof, obtaining a pointer sequence: this operation unfolds the temporal information coded in the proof. Pointer sequences then solve the problem of packing space and time information into a single object: the geometrical information is preserved by means of backwards pointers, while the temporal information is represented by the linear ordering of nodes!  %In particular, we can finally \emph{see} the edges of the tree crossing each other. 
%Now something appears right before our eyes, something we were completely blind of: the edges of the tree crossing each other. 
And finally something appears right before our eyes, something very clear, yet something we were completely blind to: the edges of the tree crossing each other. That is all we need to measure a proof's complexity.
%All of a sudden we can study the real complexity of a proof, because we finally have a good representation of it, that takes into account both its geometrical and temporal structure. 
Let us study an example.

The arena for the Coquand-style backtracking game which corresponds to the formula $\exists x\,\forall y\,\exists z\, P(x,y,z)$  comprises as set of moves and justification relation:

$$M\, =\, \{*\}\cup\{\mathsf{x}:=t\ |\ t \mbox{ is a closed term}\}\cup \{\mathsf{y}:=t\ |\ t \mbox{ is a closed term}\}\cup \{\mathsf{z}:=t\ |\ t \mbox{ is a closed term}\}$$
$$\vdash\ =\ \{(*, \mathsf{x}:=t)\ |\ t \mbox{ closed}\}\cup\{(\mathsf{x}:=t_{1}, \mathsf{y}:=t_{2}) \ |\ \mbox{ $t_{1}, t_{2}$ closed}\} \cup \{(\mathsf{y}:=t_{1}, \mathsf{z}:=t_{2}) \ |\ \mbox{ $t_{1}, t_{2}$ closed}\} $$
%$$I\, =\, \{\mathsf{x}:=t\ |\ t \mbox{ is a closed term}\}$$

\noindent As remarked by Heijltjes \cite{Heijltjes}, the expansion tree above may  be seen as a strategy: the term labels $t_{1}, t_{2}, t_{3}, t_{4}$ are the witnesses to be played by Player for the corresponding formula of the tree, while the eigenvariable labels $a_{1}, a_{2}, a_{3}$ are to be replaced by the answers of Opponent. In order for the tree to be a correct specification of a strategy, Player must know in which order to play the moves, because the terms $t_{i}$ might contain eigenvariables $a_{j}$. The tree above is \textbf{correct}  if there is a total ordering of its edge labels $\ell_{1}, \ell_{2}, \ldots, \ell_{n}$ such that: 

\begin{enumerate}
\item if $i< j$ and $\ell_{i}$ and $\ell_{j}$ are on the same branch, then $\ell_{i}$ comes before $\ell_{j}$ in that branch.

\item for every $i, j$, if $\ell_{i}$ is an eigenvariable and $\ell_{j}$ a term containing it, $i<j$.
\end{enumerate}

\noindent An expansion tree extracted from a cut-free proof is correct by construction \cite{Heijltjes}. A correct ordering of the edge labels of the tree above could be: $t_{1}\, a_{1}\, t_{2}\, a_{2}\, t_{3}\, a_{3}\, t_{4}$, which would give the following strategy for Player: play first $t_{1}$ for the root formula, then wait for the answer $a_{1}$, then backtrack and play $t_{2}$ (which can contain at most $a_{1}$ as variable) for the root formula,  then wait for the answer $a_{2}$, then backtrack and play $t_{3}$ (which can contain at most $a_{1}, a_{2}$ as variables) for the root formula,  then wait for the answer $a_{3}$ and finally play $t_{4}$ (which can contain at most $a_{1}, a_{2}, a_{3}$ as variables).  Formally,  the strategy $\sigma$ for Player is the set of even plays $s$ such that $\pview{s}$ is an initial segment of a play of the following shape:
 %$$*\ \mathsf{x}:= t_{1}\ \mathsf{y}:= u_{1}\  \mathsf{x}:= t_{2}[u_{1}/a_{1}]\ \mathsf{y}:= u_{2}\  \mathsf{x}:= t_{3}[u_{1}/a_{1} u_{2}/a_{2}]\ \mathsf{y}:= u_{3}\  \mathsf{z}:=t_{4}[u_{1}/a_{1}\, u_{2}/a_{2}\, u_{3}/a_{3}]$$ 
 \vspace{-1ex}
 $$ \begin{tikzpicture}
 \node (0) at (-1.2,0.015)  {$*$};
\node (a) at (0,0.015) {$\mathsf{x}:= t_{1}$};
\node (b)  at (1.5,0)    {$\mathsf{y}:= u_{1}$};
 \node (c)  at (3,0.02)    {$ \mathsf{x}:= t_{2}'$};
 \node (d)  at (4.5,0)    {$\mathsf{y}:= u_{2}$};
\node (e)  at (6,0.02)    {$\mathsf{x}:= t_{3}'$};
\node (f)  at (7.5,0)    {$\mathsf{y}:= u_{3}$};
\node (g)  at (9,0.015)    {$ \mathsf{z}:=t_{4}'$}; 
\draw[-latex,right]  (b) edge (a);
 \draw[-latex,bend right]  (c) edge (0);
 \draw[-latex, right]  (a) edge (0);
\draw[-latex,right]  (d) edge (c);
\draw[-latex, bend right]  (e) edge (0);
\draw[-latex,right]  (f) edge (e);
\draw[-latex, bend right]  (g) edge (b); 
\end{tikzpicture}$$
 where $t_{2}'=t_{2}[u_{1}/a_{1}]$, $t_{3}'=t_{3}[u_{1}/a_{1}\ u_{2}/a_{2}]$,   $t_{4}'=t_{4}[u_{1}/a_{1}\ u_{2}/a_{2}\ u_{3}/a_{3}]$ and $u_{1}, u_{2}, u_{3}$ are terms representing the values for $a_{1}, a_{2}, a_{3}$ selected by Opponent. The definition of $\sigma$ might seem mysterious at first, but essentially there are no other choices: to determine his next move in a play $s$, Player must compute $\pview{s}$, which is the part of the history that concerns him, and then look at the corresponding initial segment of the linearized expansion tree.  It is immediate to verify that $\sigma$ is visible and bounded by $8$.  
 
 A strategy for Opponent, likewise, is given by any expansion tree for the involutive negation of the Player formula, for example: 
 $$\begin{tikzpicture}
\tikzstyle{level 1}=[sibling distance=35mm]
\tikzstyle{level 2}=[sibling distance=35mm]

\node{$\forall x\, \exists y\,\forall z\, \lnot P(x,y,z)$}
[grow=north]
child{node{$\exists y\,\forall z\, \lnot P(b,y,z)$} 
child{node{$\forall z\, \lnot P(b,u_{3},z)$}  
child{node{$\lnot P(b,u_{3},b_{3})$} 
%child{node{$ P(t_{3},a_{3},t_{4})$} edge from parent node[left]{$t_{4}$}}
edge from parent node[left]{$b_{3}$}} 
edge from parent node[left]{$u_{3}$}}
child{node{$ \forall z\,\lnot P(b,u_{2},z)$} 
child{node{$\lnot P(b,u_{2},b_{2})$} edge from parent node[left]{$b_{2}$}} 
edge from parent node[left]{$u_{2}$}}
child{node{$\forall z\, \lnot P(b,u_{1},z)$} 
child{node{$\lnot P(b,u_{1},b_{1})$} edge from parent node[left]{$b_{1}$}} 
edge from parent node[left]{$u_{1}$}}
edge from parent node[left]{$b$}
};
\end{tikzpicture}$$
 A correct ordering of the edge labels of the tree above could be: $b\, u_{1}\, b_{1}\, u_{2}\, b_{2}\, u_{3}\, b_{3}$ and the corresponding  strategy $\tau$ for Opponent  is the set of odd plays $s$ such that $\oview{s}$ is an initial segment of a play of the following shape:
 %$$*\ \mathsf{x}:= v\ \mathsf{y}:= u_{1}[v/b]\  \mathsf{z}:= v_{1}\ \mathsf{y}:= u_{2}[v/b\, v_{1}/b_{1}]\  \mathsf{z}:= v_{2}\ \mathsf{y}:= u_{3}[v/b\, v_{1}/b_{1}\, v_{2}/b_{2}]$$ 
 \vspace{-1ex}
$$ \begin{tikzpicture}
 \node (0) at (-1.2,0.0)  {$*$};
\node (a) at (0,0) {$\mathsf{x}:= v$};
\node (b)  at (1.5,0)    {$\mathsf{y}:= u_{1}'$};
 \node (c)  at (3,-0.01)    {$ \mathsf{z}:= v_{1}$};
 \node (d)  at (4.5,0)    {$\mathsf{y}:= u_{2}'$};
\node (e)  at (6,-0.01)    {$\mathsf{z}:= v_{2}$};
\node (f)  at (7.5,0)    {$\mathsf{y}:= u_{3}'$};
\draw[-latex,right]  (b) edge (a);
 \draw[-latex,bend right]  (d) edge (a);
 \draw[-latex, right]  (a) edge (0);
\draw[-latex,right]  (c) edge (b);
\draw[-latex, bend right]  (f) edge (a);
\draw[-latex,right]  (e) edge (d);
\end{tikzpicture}$$
 where $u_{1}'=u_{1}[v/b]$, $u_{2}'=u_{2}[v/b\ v_{1}/b_{1}]$, $u_{3}'=u_{3}[v/b\ v_{1}/b_{1}\ v_{2}/b_{2}]$ and  $v, v_{1}, v_{2}$ are terms representing the values for $b_{1}, b_{2}, b_{3}$ selected by Player. It is immediate to verify that $\tau$ is visible and bounded by $7$. 
 
The strategies $\sigma$ and $\tau$ are both eager to backtrack, but each one assumes that the other does not. Innocence \cite{HO} is the key tool to reconcile in a consistent manner these opposite desires: each strategy concentrates only on the current view to determine its answer, discarding the many backtracking moves that the other strategy may perform.  Computing the interactions in $\sigma\star\tau$ is thus very instructive for familiarizing with the concept of view and for seeing the already big combinatorial explosion that is generated. We leave to the readers the details of the computations. For simplicity of notation, we assume now that the terms $t_{1}, t_{2}, t_{3}, t_{4}, u_{1}, u_{2}, u_{3}$ do not contain variables, so that the substitutions have no effect. This simplification has no influence on the length of the interaction, which is totally independent of the shape of terms. An initial segment of the interaction between the two strategies then looks like this:
 %\vspace{-3ex}
% \begin{figure*}[!htb]
 $$ \begin{tikzpicture}[font=\scriptsize]
\node (0) at (0.4,0) {$*$};
\node (1) at (1.4,0) {$\mathsf{x}:=t_{1}$};
\node (2) at (2.7,0) {$\mathsf{y}:=u_{1}$};
\node (3) at (3.7,0) {$\mathsf{x}:=t_{2}$};
\node (4) at (5,0) {$\mathsf{y}:=u_{1}$};
\node (5) at (6,0) {$\mathsf{x}:=t_{3}$};
\node (6) at (7.3,0) {$\mathsf{y}:=u_{1}$};
\node (7) at (8.3,0) {$\mathsf{z}:=t_{4}$};
\node (8) at (9.3,0) {$\mathsf{y}:=u_{2}$};
\node (9) at (10.3,0) {$\mathsf{x}:=t_{2}$};
\node (10) at (11.6,0) {$\mathsf{y}:=u_{1}$};
\node (11) at (12.6,0) {$\mathsf{x}:=t_{3}$};
\node (12) at (13.9,0) {$\mathsf{y}:=u_{1}$};
\node (13) at (14.8,0) {$\mathsf{z}:=t_{4}$};
\node (14) at (15.7,0) {$\mathsf{y}:=u_{3}$};
  \draw[-latex,right]  (1) edge (0);
 \draw[-latex,right]  (2) edge (1);
 \draw[-latex,bend right]  (3) edge (0);
 \draw[-latex,right]  (4) edge (3);
 \draw[-latex,bend right]  (5) edge (0);
 \draw[-latex,right]  (6) edge (5);
  \draw[-latex,bend right]  (7) edge (2);
  \draw[-latex,bend right]  (8) edge (1);
    \draw[-latex,bend right]  (9) edge (0);
    \draw[-latex,right]  (10) edge (9);
    \draw[-latex,bend right]  (11) edge (0);
    \draw[-latex,right]  (12) edge (11);
    \draw[-latex, bend right]  (13) edge (8);
     \draw[-latex, bend right]  (14) edge (1);
\end{tikzpicture}
$$
%\caption{Term Assignment Rules for}\label{fig:system}
%\end{figure*}
and the interaction finishes with $\mathsf{x}:=t_{2}\ \mathsf{y}:=u_{1}\ \mathsf{x}:=t_{3}\ \mathsf{y}:=u_{1}\ \mathsf{z}:=t_{4}$. 

In general, a cut between two cut-free proofs of respectively $\Gamma, A$ and $\Gamma, A^{\bot}$  generates a number of interactions between two expansion trees, one for $A$ and one for the involutive negation $A^{\bot}$; the length of the cut-elimination process is then a simple function of the maximal length of these interactions. In this paper we do not investigate the exact connection between cut-elimination and strategy interaction,  the situation in first-order logic being slightly different from the situation in Arithmetic \cite{Herbelin}.  

\subsection{A Simple Property of Views}

The reader should make sure to be proficient with the concepts of view and visibility: several  proofs will require computations on views and checking visibility of plays. 
To ``warm up'', a simple property of views that we shall use very often is the following.

\begin{proposition}[Shape of Views]\label{prop-view}
Suppose $s= r\, \bullet\,r'\, \circ$ is a visible play over an arena $\mathbb{A}$ and there is an edge $(\bullet, \circ)$ in $s$. Then $s$ is of the form:

\vspace{-2ex}
$$\begin{tikzpicture}
%\node  at (0.5,0) {$\ldots$};
\node  (z0) at (3.5,0.04) {$r\, \bullet$};
\node  (z) at (4,0) {$ \circ_{k}$};
\node  at (4.5,0) {$\ldots$};
\node (bk) at (5,0) {$\bullet_{k}$};
\node (ck-1) at (5.5,0) {$\circ_{k-1}$};
\node (z2) at (6.1,0) {$\ldots$};
\node (bk-1) at (6.7,0) {$\bullet_{k-1}$};
\node at (7.2, 0)   {$\ldots$};
\node (c2) at (7.7,0) {$\circ_{2}$};
\node (z2) at (8.2,0) {$\ldots$};
\node (b2) at (8.7,0) {$\bullet_{2}$};
\node  (c1) at (9,0) {$\circ_{1}$};
\node  at (9.5,0) {$\ldots$};
\node  (b1) at (10,0) {$\bullet_{1}$};
\node (d) at (10.4,0.03) {$\circ$};
\draw[-latex,bend right]  (d) edge (z0);
\draw[-latex,bend right]  (b1) edge (c1);
\draw[-latex,bend right]  (b2) edge (c2);
\draw[-latex,bend right]  (bk) edge (z);
\draw[-latex,bend right]  (bk-1) edge (ck-1);
\end{tikzpicture}$$
where, for $1\leq i \leq k$, $\bullet_{i+1}$ immediately precedes $\circ_{i}$ and there is an edge $(\circ_{i}, \bullet_{i})$ in $s$.

\end{proposition}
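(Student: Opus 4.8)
The plan is to reduce the statement to a structural analysis of the relevant view of the prefix $r\, \bullet\, r'$, and then to read off the claimed shape by unfolding the recursive definition of that view. First I would split on the polarity of $\circ$, treating $\lambda(\circ)=P$ in detail and leaving the case $\lambda(\circ)=O$ as the symmetric argument with $\oview{s'}$ in place of $\pview{s'}$. Since $\circ$ is the last move of the visible play $s$ and is a Player move, visibility forces $\circ$ to point into $\pview{r\, \bullet\, r'}$; as $\circ$ points to $\bullet$ by hypothesis, this yields the crucial fact that $\bullet\in\pview{r\, \bullet\, r'}$. Moreover, by the alternation condition in $s$, the move immediately preceding $\circ$ is an Opponent move, which I will call $\bullet_{1}$; it is the last move of $r\, \bullet\, r'$.

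Next I would make precise the right-to-left process computing $\pview{r\, \bullet\, r'}$ from the two recursive clauses: a Player move at the current end is kept and one steps back by one; an Opponent move $m$ at the current end is kept together with its justifier $\circ'$ (a Player move, by alternation), after which one resumes from the position immediately before $\circ'$. Because $\bullet$ is an Opponent move lying in the view, and Opponent moves can enter the view only as such ``current'' moves — the justifiers recruited by the Opponent clause are always Player moves, so $\bullet$ is never reached by a jump — the process must eventually arrive exactly at $\bullet$. I would then trace the process from $\bullet_{1}$ down to $\bullet$: it produces an alternating chain $\bullet_{1}, \circ_{1}, \bullet_{2}, \circ_{2}, \ldots, \bullet_{k}, \circ_{k}$ in which each $\bullet_{i}$ is an Opponent move whose justifier is the Player move $\circ_{i}$, giving the edges $(\circ_{i}, \bullet_{i})$, and where the step following the recruitment of $\circ_{i}$ lands on the move immediately to the left of $\circ_{i}$. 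By alternation that move is an Opponent move; if it is $\bullet$ the chain stops, with $\circ_{k}$ immediately following $\bullet$, and otherwise it is the next link $\bullet_{i+1}$.

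Assembling these observations gives the displayed form: $\bullet_{1}$ immediately precedes $\circ$, each $\bullet_{i}$ points back to $\circ_{i}$, and each $\bullet_{i+1}$ immediately precedes $\circ_{i}$ — reading $\bullet_{k+1}$ as $\bullet$, so that $\bullet$ immediately precedes $\circ_{k}$. The moves discarded by the jump clause are precisely the stretches between $\circ_{i}$ and $\bullet_{i}$ left implicit in the statement's picture, and the degenerate case $r'=\epsilon$ (so that $\bullet$ itself immediately precedes $\circ$) is the base case $k=0$.

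I expect the adjacency-in-$s$ claims, rather than the pointer claims, to be the main obstacle. That each $\bullet_{i}$ points to $\circ_{i}$ is immediate from the Opponent-move clause of the view. What takes care is arguing that $\bullet_{i+1}$ is the \emph{immediate} predecessor of $\circ_{i}$ in the whole play $s$: this does not follow from the view as an abstract subsequence, but from combining the view construction (which, after recruiting $\circ_{i}$, resumes exactly at the position before $\circ_{i}$) with the alternation condition in $s$ (which guarantees that this predecessor position carries an Opponent move, hence the very next link of the chain). One must also invoke visibility at precisely the right point to be sure the chain terminates at $\bullet$ and does not jump past it; this is where the hypothesis that $s$ is visible is indispensable.
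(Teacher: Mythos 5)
Your proposal is correct and takes essentially the same approach as the paper: the paper's proof likewise unfolds the recursive definition of the $p$-view of $r\,\bullet\,r'$ into the chain $a\,\circ_{z}\ldots\bullet_{z}\ldots\circ_{1}\ldots\bullet_{1}$ with edges $(\circ_{i},\bullet_{i})$ and each $\bullet_{i+1}$ immediately preceding $\circ_{i}$, computes the view as $a\,\circ_{z}\,\bullet_{z}\ldots\circ_{1}\,\bullet_{1}$, and then uses visibility of $s$ at $\circ$ to conclude $\bullet=a$ or $\bullet=\bullet_{k}$. The only differences are bookkeeping — you apply visibility first and truncate the chain at $\bullet$, and you case-split on $\lambda(\circ)$ where the paper argues uniformly for a generic player $p$ — neither of which changes the substance.
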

\begin{proof}
Let $\lambda(\circ)=p$. We can write $s$ as
$$\begin{tikzpicture}

%\node  at (0.5,0) {$\ldots$};
\node at (2.7,0.04)   {$s=$};
\node at (3.1,0.04)   {$a$};
\node  (z) at (3.5,0) {$ \circ_{z}$};
\node  at (4,0) {$\ldots$};
\node (bk) at (4.5,0) {$\bullet_{z}$};
\node  at (5,0) {$\ldots$};
%\node (ck-1) at (5.5,0) {$\circ_{l}$};
%\node (z2) at (6.15,0) {$\ldots$};
%\node (bk-1) at (6.7,0) {$\bullet_{l}$};
\node at (5.5, 0)   {$\ldots$};
\node (c2) at (6,0) {$\circ_{2}$};
\node (z2) at (6.5,0) {$\ldots$};
\node (b2) at (7,0) {$\bullet_{2}$};
\node  (c1) at (7.5,0) {$\circ_{1}$};
\node  at (8,0) {$\ldots$};
\node  (b1) at (8.5,0) {$\bullet_{1}$};
\node at (8.9,0.04)    {$\circ$};
%\draw[-latex,bend right]  (d) edge (z0);
\draw[-latex,bend right]  (b1) edge (c1);
\draw[-latex,bend right]  (b2) edge (c2);
\draw[-latex,bend right]  (bk) edge (z);
%\draw[-latex,bend right]  (bk-1) edge (ck-1);
\end{tikzpicture}$$
%$$s=a \circ_{z}\ldots\bullet_{z}  \ldots\circ_{2}\ldots\bullet_{2}\circ_{1}\ldots\bullet_{1}\, \circ$$
  where, for $1\leq i \leq z$, $\bullet_{i+1}$ immediately precedes $\circ_{i}$ and there is an edge $(\circ_{i}, \bullet_{i})$ in $s$ and $a\in I$ or $a$ is the empty sequence. Therefore
the  $p$-view of $r\,\bullet\,r'$ is 
$$w=a\,\circ_{z}\,\bullet_{z}  \ldots\circ_{2}\,\bullet_{2}\circ_{1}\,\bullet_{1}$$ 
  Since $s$ is visible,  $\circ$ must point to a move in $w$, so either $\bullet=a$ or $\bullet=\bullet_{k}$, with $1\leq k< z$, which is what we  wanted to show. 
%Let $\lambda(\circ)=p$. Since $s$ is visible,  $\circ$ must point to a move in the $p$-view of $s$, so $\bullet$ must be in the $p$-view of $s$. Therefore the picture is correct, because it never happens, for $1\leq i \leq k$, that $ \efrom{\bullet_{i}}\cross (\circ, \bullet)$, otherwise $\bullet$ would not be in the $p$-view of $s$. 
\end{proof}
Another well-known and simple-to-prove property of views (\cite{HO}, pp. 330--332) that we recall is that for every visible play $s$ and player $p$, the $p$-view of $s$ is itself a visible play.

\section{Berardi-de'Liguoro Backtracking Level}\label{sec-BacktrackingLev}

We now introduce the Berardi-de'Liguoro notion \cite{BerBack} of backtracking level of a strategy, which measures in a sophisticated and precise way the complexity of the backtracking moves made by the strategy. We shall have to extend the notion to arbitrary plays between strategies and investigate the resulting properties, thus continuing from where \cite{BerBack} stopped. 

Our first goal is to describe the concept of ``mind changing about a mind change'' or equivalently ``retraction of a retraction'', which will be formalized as the binary \emph{inactivation relation} $\inact$ between edges of a play. The idea, here, is that in a play any edge that points to a move back in the past of the play, implicitly erases, \emph{retracts}, all the moves between the two moves connected by the edge. A player act of pointing back to a past move, immediately implies that he is not satisfied with his former reactions to that move. It does not matter whether the reactions were ignoring the move altogether or answering it, the player has now changed his mind: either he has chosen to stop ignoring the move and answer it, or he decided to answer it differently. So, everything that was generated starting from his first former reaction no longer interests him, and he discards that part of the history. 
% If the reaction was ignoring it, because something more urgent was first to be accomplished with the aim of answering that move, than the player does not need to recall what he has done to find the answer, and discards that part of the history. If the reaction was answering that move, the player has now made a mind changing, so everything that was generated starting from his former answer no longer interests him, and he discards that part of the history. 
To continue the discussion we need the concept of crossing edges.

\begin{definition}[Crossing Edges]
Let $s=s_{0}\, s_{1}\, \ldots s_{n}$ be a play over an arena $\mathbb{A}$. We say that the edge $(s_{m_{1}}, s_{n_{1}})$ \textbf{crosses} the edge $(s_{m_{2}}, s_{n_{2}})$, if 
$$ m_{2} < m_{1}<n_{2}<n_{1}$$
which is represented in a picture as: 
\vspace{-1ex}
$$\begin{tikzpicture}
\node (a) at (0,0) {$s_{m_{2}}$};
\node at (1,0)   {$\ldots$};
\node (b) at (2,0) {$s_{m_{1}}$};
\node at (3,0)   {$\ldots$};
\node (c) at (4,0) {$s_{n_{2}}$};
\node at (5,0)   {$\ldots$};
\node (d) at (6,0) {$s_{n_{1}}$};
\draw[-latex,bend right]  (d) edge (b);
\draw[-latex,bend right]  (c) edge (a);
\end{tikzpicture}$$
We write $e_{2} \cross e_{1}$ whenever an edge $e_{1}$ crosses an edge $e_{2}$, that is when $e_{2}$ \textbf{is crossed} by $e_{1}$. 
\end{definition}
Now, how to interpret a situation in which a player edge is crossed by another edge of the same player? This means that the player no longer agrees with his previous act of discarding the history between the ending points of the first edge. He wants to access something that was erased, he retracts his retraction. It is tempting to say that if an edge is crossed by another edge, then the latter always represents a mind change about the retraction made by the former and thus that the latter \emph{inactivates} the former. This definition would  not technically work: we shall see this in the discussion before Proposition \ref{prop-back1}, which should be already enough of a reason to stop considering the idea. However, such a definition would be just wrong in itself and inconsistent with our discussion. Consider the following situation:
\vspace{-1ex}
$$\begin{tikzpicture}
\node (a) at (0,0) {$\ldots \bullet_{1}$};
\node (z1) at (1,0) {$\ldots$};
\node (b) at (2,0) {$\bullet_{2}$};
\node (z2) at (3,0) {$\ldots$};
\node (c) at (4,0) {$\circ_{1}$};
\node (z3) at (5,0) {$\ldots$};
\node (d) at (6,0) {$\circ_{2}$};
\node (z5) at (7,0) {$\ldots$};
\node (e) at (8,0) {$\circ_{3}$};
\draw[-latex,bend right]  (d) edge (b);
\draw[-latex,bend right]  (c) edge (a);
\draw[-latex,bend right]  (e) edge (z1);
\end{tikzpicture}$$
If we assume the second edge $\efrom{\circ_{2}}$ inactivates the first edge $\efrom{\circ_{1}}$, then the moves between $\bullet_{1}$ and $\circ_{1}$ may very well be accessible at the moment of the move $\circ_{3}$; in other words, since the second edge $\efrom{\circ_{2}}$ has already retracted the first retraction $\efrom{\circ_{1}}$, the third edge $\efrom{\circ_{3}}$ retracts nothing at all, even if it crosses the first. The point is that the first edge is \emph{active} at the moment the second edge is played, while it could be \emph{inactive}, i.e. not ``existing'', when the third edge is played. An edge is said to be inactive if  there is an active edge crossing it: a  real later retraction.  An edge is said to be active if all edges crossing it are inactive: no real retractions. One edge is inactivated by another one, if the former is active at the very moment in which the latter is created. Since in order to determine whether an edge is active or inactive  the future of the play is involved, the Berardi-de'Liguoro Definition \ref{def-inact} of activity  and the inactivation relation is by backward induction.

\begin{definition}[Active and Inactive Edges of a Play, Inactivation]\label{def-inact} Let $s=s_{0}\, s_{1}\, \ldots s_{n}$ be a play over an arena $\mathbb{A}$. 
\begin{itemize}
\item We define by backward induction, for all $0\leq i\leq n$, whether the edge from $s_{i}$ is \textbf{active} or \textbf{inactive in} $s$.
\begin{enumerate}
\item $\efrom{s_{n}}$ is active.
\item $\efrom{s_{i}}$, with $i<n$, is inactive if there is a $j>i$ such that $\efrom{s_{i}}\cross \efrom{s_{j}}$ and $\efrom{s_{j}}$ is active. 
\item $\efrom{s_{i}}$, with $i<n$, is active if for all  $j>i$, if  $\efrom{s_{i}}\cross \efrom{s_{j}}$, then $\efrom{s_{j}}$ is inactive. 
\end{enumerate}
\item We say that $\efrom{s_{i}}$ \textbf{inactivates} $\efrom{s_{j}}$ \textbf{in} $s$ or that $\efrom{s_{j}}$ \textbf{is inactivated by} $\efrom{s_{i}}$ \textbf{in} $s$, and write $s_{j}\inact s_{i}$ in $s$, if  $\efrom{s_{j}}$ is active in $s_{0}\, s_{1}\ldots \, s_{i-1}$ and $\efrom{s_{j}}\cross \efrom{s_{i}}$. \end{itemize}
\end{definition}
The combinatorial power of the definition lies in its logical complexity: the condition determining whether a edge is active or inactive in a play is a formula with possibly as many alternating quantifiers as the length of the play. We remark that the concept of inactivation is always relative to a specified play and has a temporal connotation; we will always make sure to declare with respect to what play the inactivation relation is considered. As an example of active and inactive edges, let us consider the following visible play:
\vspace{-1ex}
 $$\begin{tikzpicture}
 \node (x) at (0,0) {$\bullet_{1}$};
\node (a) at (1,0) {$\circ_{1}$};
\node (b) at (2,0) {$\bullet_{2}$};
\node (c) at (3,0) {$\circ_{2}$};
\node (d) at (4,0) {$\bullet_{3}$};
\node (e) at (5,0) {$\circ_{3}$};
\node (f)  at (6, 0)  {$\bullet_{4}$};
\node (g)  at (7, 0)  {$\circ_{4}$};
\node (h)  at (8, 0)  {$\bullet_{5}$};
\draw[-latex, right]  (b) edge (a);
\draw[-latex, right]  (a) edge (x);
\draw[-latex,bend right]  (c) edge (x);
\draw[-latex, right]  (d) edge (c);
\draw[-latex,bend right]  (e) edge (b);
\draw[-latex, right]  (f) edge (e);
\draw[-latex,bend right]  (g) edge (d);
\draw[-latex,bend right]  (h) edge (c);
\end{tikzpicture}$$
The edges $\efrom{\bullet_{5}}$ are $\efrom{\circ_{4}}$ active in the whole play, thus the edge $\efrom{\circ_{3}}$ is inactive and the edge $\efrom{\circ_{2}}$ is active in the whole play. Moreover, $\efrom{\circ_{3}}$ is active in the initial segment of the play ending with the move $\bullet_{4}$, therefore $\efrom{\circ_{3}}\inact \efrom{\circ_{4}}$. However, it is not the case that $\efrom{\circ_{3}}\inact \efrom{\bullet_{5}}$, for $\efrom{\circ_{3}}$ is inactive in the initial segment of the play ending with the move $\circ_{4}$. 

The time has come to define the backtracking level of a move, play and strategy. The idea is to consider the chain of mind changes that originates from a move. A move might be a simple retraction, and thus have backtracking level $1$, or it might be a retraction of a retraction, and thus have backtracking level $2$, or it might be a retraction of a retraction of a retraction, and thus have backtracking level $3$. In general, one measures the length of a maximal chain of such retractions of retractions. More formally, given a move $a$, if $a$ does not retract anything because it answers the immediately previous move, then it is declared to have backtracking level $0$. Otherwise, one defines the backtracking level of $a$ as the length $k+1$ of a maximal chain of the shape $e_{1}\inact e_{2}\inact \ldots\inact e_{k}\inact\efrom{a}$; in particular, if there is no $e$ such that $e\inact a$, the backtracking level of $a$ is $1$. The backtracking level of a play is the maximum backtracking level of its moves, while the backtracking level of a bounded strategy is the maximum among the backtracking levels of the views of the plays in it.% Of course, one considers only the views, because one is only interested in visible strategies, which are those associated to proofs and programs.

\begin{definition}[Backtracking level of a Move, Play and Strategy]\label{def-backlevel}
Let $s=s_{0}\, s_{1}\, \ldots s_{n}$ be a play over an arena $\mathbb{A}$. 
\begin{itemize}
\item We define by induction, for all $0\leq i\leq n$, the \textbf{backtracking level in $s$ of the move} $s_{i}$ ($\blev(s_{i})$) as following. 

If  $(s_{i-1}, s_{i})$ is an edge of $s$ or ${i}=0$, then $\blev(s_{i})=0$.

If  $(s_{i-1}, s_{i})$ is not an edge of $s$ and ${i}>0$, then  $\blev(s_{i})=k+1$, where $k$ is the length of the longest chain $e_{1}, e_{2}, \ldots, e_{k}$ of edges of $s$ such that in $s$
$$e_{1}\inact e_{2}\inact \ldots\inact e_{k}\inact\efrom{s_{i}}$$
%Equivalently, we may define by induction, for all $0\leq i\leq n$, the \textbf{backtracking level} ($\blev$) in $s$ of the move $s_{i}$ as following: 
%\begin{enumerate}
%\item $\blev(s_{0})=0$. 
%\item $\blev(s_{i})=0$ if  $s_{i-1}\vdash s_{i}$ in $s$.
%\item Otherwise, if $0<i\leq n$, 
%$$\blev(s_{i})=1+\mathsf{max}(\{0\}\cup\{\blev(s_{j})\ |\ j<i\ \land \mbox{$\efrom{s_{j}}\inact\efrom{s_{i}}$}\})$$
%\end{enumerate}
\item We define the \textbf{backtracking level of the play $s$} as: $$\blev(s)=\mathsf{max}\{\blev(s_{i})\ | \ 0\leq i\leq n\}$$
We define the \textbf{backtracking level of the player $p$ in the play $s$} as: $$p\blev(s)=\mathsf{max}\{\blev(s_{i})\ | \ 0\leq i\leq n\land \lambda(s_{i})=p\}$$

\item For any bounded Player strategy $\sigma$, we define its backtracking level as: $$\blev(\sigma)=\mathsf{max}\{\blev(\pview{s})\ | \ s\in \sigma\}$$
\item For any bounded Opponent strategy $\sigma$, we define its backtracking level as: $$\blev(\sigma)=\mathsf{max}\{\blev(\oview{s})\ | \ s\in \sigma\}$$
\item If $\blev(\sigma)=n$, we say  that $\sigma$ is a \textbf{$n$-backtracking strategy}.\\
\end{itemize}
\end{definition}
Let us consider the following example:
 $$\begin{tikzpicture}
 \node (x) at (0,0) {$\bullet_{1}$};
\node (a) at (1,0) {$\circ_{1}$};
\node (b) at (2,0) {$\bullet_{2}$};
\node (c) at (3,0) {$\circ_{2}$};
\node (d) at (4,0) {$\bullet_{3}$};
\node (e) at (5,0) {$\circ_{3}$};
\node (f)  at (6, 0)  {$\bullet_{4}$};
\node (g)  at (7, 0)  {$\circ_{4}$};
\draw[-latex, right]  (b) edge (a);
\draw[-latex, right]  (a) edge (x);
\draw[-latex,bend right]  (c) edge (x);
\draw[-latex, right]  (d) edge (c);
\draw[-latex,bend right]  (e) edge (b);
\draw[-latex, right]  (f) edge (e);
\draw[-latex,bend right]  (g) edge (d);
\end{tikzpicture}$$
The move $\circ_{1}$ is of backtracking level $0$, $\circ_{2}$ of backtracking level $1$, $\circ_{3}$ of backtracking level $2$ and $\circ_{4}$ of backtracking level $3$.  But in the following play
$$\begin{tikzpicture}
 \node (x) at (0,0) {$\bullet_{1}$};
\node (a) at (1,0) {$\circ_{1}$};
\node (b) at (2,0) {$\bullet_{2}$};
\node (c) at (3,0) {$\circ_{2}$};
\node (d) at (4,0) {$\bullet_{3}$};
\node (e) at (5,0) {$\circ_{3}$};
\node (f)  at (6, 0)  {$\bullet_{4}$};
\node (g)  at (7, 0)  {$\circ_{4}$};
\draw[-latex, right]  (b) edge (a);
\draw[-latex, right]  (a) edge (x);
\draw[-latex,bend right]  (c) edge (x);
\draw[-latex, right]  (d) edge (c);
\draw[-latex,bend right]  (e) edge (b);
\draw[-latex, right]  (f) edge (e);
\draw[-latex,bend right]  (g) edge (b);
\end{tikzpicture}$$
$\circ_{4}$ is of backtracking level $1$, since in the initial segment of the play terminating with $\bullet_{4}$ the edge $\efrom{\circ_{3}}$ is active and thus $\efrom{\circ_{2}}$ inactive.

\section{Ideas of the Proof}\label{sec-proofidea}
We now have all the tools needed first to enter  and then survive the agitated waters of complexity analysis. In order not to lose ourselves in the details before reaching the final destination, we  trace a map with the principal objectives to achieve and their role in our proof.
The lowest layer of our proof strategy is built upon some reflections about two crucial features of Coquand's proof of debate termination \cite{Coquand}, so it is wise to  look back at it first.
\subsection{Back to Coquand's proof: Uncrossed Edges and the Trimming Operation}
Coquand proved in \cite{Coquand} that in any arena of finite depth debates between well-founded strategies always terminate. The proof's main idea is to consider the so called \emph{orphan} moves, which are those moves that during the debate under focus are not directly answered by any later move. 
%The main idea of Coquand's original proof \cite{Coquand} that in any arena of finite depth debates between well-founded strategies always terminate, is to consider the so called \emph{orphan} moves, which are those moves that during the debate under focus are not directly answered by any later move. 
One can easily prove than in any visible play $s$, the edges from such moves can never be crossed. If we cut away from $s$  all the moves that are in the interior of an edge from some orphan move, removing also the edge's ending points, we obtain again a visible play $s'$. This play takes place by construction in an arena of depth strictly less than the original one, because all the moves of maximal depth are removed from the original play $s$. Therefore one can argue by induction that $s'$ is of finite length and then  easily calculate the length of the original play $s$. Interestingly, if we ignore all the other edges of the play, the edges from orphan moves can be seen from outside as $1$-backtracking moves, because they never cross each other. Thus if we watch the play flowing from left to right like a sequence of photograms and, as soon as an edge from an orphan move appears, we erase all the moves in its interior, we see a $1$-backtracking game \cite{BerCoqHay}, for example of the form:
\vspace{-2ex}
$$\circ_{1};\ \ \ \circ_{1}\, \circ_{2};\ \ \  \circ_{1}\, \circ_{2}\, \circ_{3};\ \ \  \circ_{1}\, \circ_{2}\, \circ_{3}\, \circ_{4};\ \ \  \circ_{1};\ \ \  \circ_{1}\, \circ_{5}; \ \ \  \circ_{1}\, \circ_{5}\, \circ_{6}; \ \ \  \circ_{1}\, \circ_{5}\, \circ_{6}\, \circ_{7}; \ \ \  \circ_{1}\, \circ_{5};\ \ \ldots$$
That is, each sequence of moves is either an extension or an initial segment of the previous sequence. These $1$-backtracking plays are easy to bound, so the proof goes through. 

\subsection{The Max-Min Choice Technique}

If one thinks carefully about the argument above, one can identify two crucial features: \\
\begin{enumerate}
\item a choice of edges with the property that they cannot be crossed by any other edge;
\item a trimming operation that cuts away some unwanted part of the original play -- the moves in the intervals defined by the previously chosen edges -- to produce  a play in an arena of lower depth. 
\end{enumerate}
The problem is that the induction that one winds up to employ is on the depth of the arena. Thus Coquand's argument yields again the old, well-known tower of exponentials, as high as the depth of the arena. If we want to obtain finer results, we need a more sophisticated induction. The idea is to choose differently the edges to remove and thus make  use of a different trimming operation:

\begin{enumerate}
\item we choose as edges to remove those from the moves of \emph{maximal} backtracking level among those made by the player with \emph{minimal} backtracking level in the play;
\item the trimming operation cuts away some unwanted part of the original play -- those moves contained in the interior of the previously chosen edges -- to produce a play in which the previously chosen player has backtracking level strictly less than in the original play.
\end{enumerate}
These concepts inspire the following definition.

\begin{definition}[Real Backtracking Level of a Play, Principal Player]
Let $s$ be a visible play.
\begin{itemize}
\item Let $n$ be the backtracking level of Player in $s$ and $m$ the backtracking level of Opponent in $s$. We define the \textbf{real backtracking level of} $s$ as $\mathsf{min}(n,m)$. If $\mathsf{min}(n,m)=n$, then Player is called the \textbf{principal player of} $s$; otherwise, Opponent is the \textbf{principal player of} $s$.
 \end{itemize}
\end{definition}
According to this terminology, the proof strategy described above identifies the moves by the principal player of a play that have backtracking level equal to the real backtracking level of the play. We speak of \emph{real} backtracking level, because thanks to the deep symmetry between the two players of the game, one can choose to look at a play \emph{with the eyes} of the principal player. If one looks instead at the very same play from the perspective of the other player, one sees an \emph{apparent} backtracking level higher than the real one and thus the play looks, wrongly, more complex than really is. 

With this approach, we can make a refined induction on the backtracking level of the principal player of the play, thus obtaining a tower of exponential possibly much lower than the old, well-known one. However, we shall need a lot of technical preparatory study to make our proof strategy succeed. The fact that one can focus just on the principal player of a play is surprising. Actually, it may seem a stroke of luck that such an argument can work, because it faces many obstructions that we are going to describe in a moment. As a matter of fact, we discovered our results by means of a more intuitive geometrical proof; as it was much more difficult to formalize, we discarded it and turned to the present approach. 

\subsection{Obstacles}

The \textbf{first obstacle} to address is the following. Given a visible play, consider the edges from the moves of \emph{maximal} backtracking level among those made by the principal player $p$, that is the one with \emph{minimal} backtracking level in the play: can these edges be crossed? It is not trivial that they do not -- let alone that they can assume the role that orphan moves have in Coquand's proof. Since the other player $\overline{p}$ may have backtracking level greater than that of $p$, the edge from some $\overline{p}$-move could in principle cross and inactivate the edge from a $p$-move of maximal backtracking level \emph{among those of} $p$. Therefore, we need to prove that this cannot happen: it will be a task of  Section \S\ref{sec-maxmin}. 

The \textbf{second obstacle} to address is: if two strategies interact together, what is the backtracking level of the resulting play? Could it be, for example, that if a Player strategy has backtracking level $2$ and an Opponent strategy has backtracking level $5$, then in the play resulting from their interaction Player has backtracking level $5$ or even $5+2$, due to the interference of the Opponent moves? In Section \S\ref{sec-maxmin}, we rule out this possibility and show that in the resulting play the backtracking level of Player is still $2$ and that of Opponent still $5$. %This is not obvious, because there might be chains of edges each crossing the previous one of length $5+2$ in the play. 
This is not obvious, because chains of crossing edges of length $5+2$ could pop up at any moment in the play. 

The \textbf{third obstacle} to address is whether the Trimming operation really transforms a visible play, first of all, into a visible play, secondly, into a play in which the principal player of the initial play has backtracking level strictly smaller. In Coquand's proof the Trimming operation is easily shown to transform visible plays into visible plays; in our proof we shall have to investigate in depth the shape of views to check that it is the case. The study will be carried out in Section \S\ref{sec-trimm}.

There is a last tricky \textbf{fourth obstacle} to address. What if a strategy has backtracking level greater than the depth of the arena? This is a curious situation, but the answer is essentially: such a  strategy is \emph{badly defined}. Section \S\ref{sec-gadget} will settle the matter: it will be shown that inserting some dummy moves in a strategy, that form a sort of switch-on/switch-off graph-theoretic gadget, one can get a strategy of backtracking level less than or equal to the depth of the arena. The same can be done with an arbitrary play.  This provides conclusive evidence that in principle, a strategy should never have backtracking level greater than the depth of the correspondent arena: if it does, it has been constructed in a careless manner.

\section{Maximal Edges and The Max-Min Theorem of Backtracking Levels Interaction}\label{sec-maxmin}
In this Section  we establish  several key properties of the inactivation relation between edges, removing the first two obstacles in our way to the main results.  Namely, we shall see that the edges from the moves of maximal backtracking level, among those  of one player,  cannot be crossed, let alone inactivated. Moreover, we will show that any interaction between  two strategies of backtracking level respectively $n$ and $m$ is a play of backtracking level less than or equal to the maximum among $n$ and $m$ and of real backtracking level less than or equal to the minimum among $n$ and $m$. We shall call this property the Max-Min Theorem of Backtracking Levels Interaction.  

We start off by proving that only the very same player that made a still active backtracking can retract it with another backtracking: in other terms, he alone has the power to recover the moves that he himself ``erased''. Unless he decides that those moves have to be reconsidered, the other player will never be able to see them again.  More precisely, if $e$ is a $p$-edge inactivated by the edge $e'$ in  a visible play $s$, then $e'$ is a $p$-edge as well. Moreover, the inactivation relation in $s$ reflects exactly the inactivation relation in the views of $s$: if two $p$-edges are in the inactivation relation in $s$ and the second is in the $p$-view of $s$, then  they are in the inactivation relation also in the $p$-view of $s$, and vice versa.  These properties are due to the special nature of the inactivation relation and are absolutely essential for the rest of the paper, so we check them carefully. Already the first would not hold with other naive definitions of $\inact$, namely if $\inact$ had been  taken to be equal to the crossing relation $\cross$. For example, whenever in some visible play $\efrom{\circ_{1}}\cross \efrom{\circ_{2}}$, the move $\bullet$ immediately following the move $\circ_{2}$ is of opposite polarity and the corresponding player can see in the interior of the first edge and thus point to a move in it, enabling the crossing relation $\efrom{\circ_{1}}\cross \efrom{\bullet}$:
 %\vspace{-1ex}
$$\begin{tikzpicture}
\node (a) at (0.8,0) {$\bullet_{1}$};
\node (z1) at (1.4,0) {$\circ$};
\node (b) at (2,0) {$\bullet_{2}$};
\node (z2) at (3,0) {$\ldots$};
\node (c) at (4,0) {$\circ_{1}$};
\node (z3) at (5,0) {$\ldots$};
\node (d) at (6,0) {$\circ_{2}$};
\node (e) at (6.6,0) {$\bullet$};
\draw[-latex,bend right]  (d) edge (b);
\draw[-latex,bend right]  (c) edge (a);
\draw[-latex,bend right]  (e) edge (z1);
\end{tikzpicture}$$
% \vspace{-2ex}
From now on we fix an arena $\mathbb{A}=(M, \vdash, \lambda, I)$ and we shall consider only plays over $\mathbb{A}$.

\begin{proposition}[Preservation of Inactivations]\label{prop-back1}
Suppose $s$ is a visible play, $e$ is a $p$-edge of $s$ and $e'=(\bullet, \circ)$ is another edge of $s$. 
\begin{enumerate}
\item  If  $e\inact e'$ in $s$, then $e'$ is a $p$-edge.
\item Assume $e'$ is in the $p$-view $w$ of $s$. Then $e \inact e'$ in $s$ if and only if $e\inact e'$ in  $w$.
\end{enumerate}

%If  $e\inact e'$ in $s$, then  $e'$ is a $p$-edge and $e\inact e'$ in every  $p$-view $w$ of every prefix of $s$ such that $w$ contains $e'$. {\color{red}{$e\inact e'$ in $s$ if and only if $e\inact e'$ in the $p$-view of $s$}}
 \end{proposition}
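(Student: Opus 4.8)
The plan is to deduce both parts from a single \emph{shielding} property of views: an \emph{active} $p$-edge hides the moves in its interior from the view of the opposite player $\overline{p}$. Throughout write $e=(s_a,s_b)$ with $\lambda(s_b)=p$ and $e'=(\bullet,\circ)$, and recall that $e\inact e'$ means $e\cross e'$ together with the condition that $e$ be active in the prefix $u$ of $s$ ending immediately before $\circ$. Positionally, $e\cross e'$ forces the base $\bullet$ of $e'$ to lie strictly inside $e$ (between $s_a$ and $s_b$) and the tip $\circ$ to lie after $s_b$. The property I would isolate is the following.

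\emph{Shielding Lemma.} If a $p$-edge $e=(s_a,s_b)$ is active in a visible play $t$, then no move occurring strictly between $s_a$ and $s_b$ belongs to the $\overline{p}$-view of $t$. Granting it, Part~1 is immediate: since $\circ$ points to $\bullet$, visibility makes $\bullet$ a move of the $\lambda(\circ)$-view of $u$; but $\bullet$ is interior to $e$ and $e$ is active in $u$, so if $\lambda(\circ)$ were $\overline{p}$ the Shielding Lemma would forbid $\bullet$ from the $\overline{p}$-view of $u$, a contradiction. Hence $\lambda(\circ)=p$, i.e. $e'$ is a $p$-edge.

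For the Shielding Lemma itself, which I expect to be the main obstacle, I would trace the $\overline{p}$-view of $t$ backward from its last move and examine the first moment the traversal lands strictly inside $e$. The traversal descends either by one position through an $\overline{p}$-move or by following the justification pointer of a $p$-move. Let $s_c$ (with $a<c<b$) be the first interior move reached and $s_k$ its immediate predecessor in the view; then $k>b$, because $s_c$ is the first interior move and a single backward step from an $\overline{p}$-move could enter the interior only across the tip $s_b$, which is impossible as $\lambda(s_b)=p$. So the step is a pointer jump of a $p$-move: $s_k$ points to $s_c$ and the edge $(s_c,s_k)$ crosses $e$, i.e. $e\cross(s_c,s_k)$. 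Because $e$ is active in $t$, this crosser is inactive, hence crossed by some edge $g=(s_{a'},s_{b'})$ active in $t$ with $c<a'<k<b'$. Since two active edges cannot cross one another, $g$ cannot cross $e$; this forces $a'\ge b$, so $g$ sits entirely past the tip $s_b$. An induction on the length of $t$ should then turn the activeness of $g$ against the assumption that $s_k$ is reached by the $\overline{p}$-view, closing the case. Making this final contradiction precise is the principal difficulty.

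Part~2 combines Part~1 with the stability of views under restriction. Crossing depends only on the relative order of moves and their pointers, so for edges whose endpoints all survive in $w$ one has $e\cross e'$ in $s$ iff $e\cross e'$ in $w$. Moreover, since $\circ$ lies in $w$, the initial segment of $w$ ending at $\circ$ is exactly the $p$-view of $u'$, where $u'=s_0\cdots\circ$ (the portion of a view preceding a move it contains depends only on the prefix ending at that move); applying the Shape of Views proposition (Proposition~\ref{prop-view}) to $u'$, which is legitimate because $\circ$ is a $p$-move by Part~1, exhibits $e$ and the relevant moves between $\bullet$ and $\circ$ as moves of $w$. It then remains to show that the active status of the $p$-edge $e$ in the prefix before $\circ$ is the same whether computed in $s$ or in $w$. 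Here Part~1 is decisive: every edge that inactivates $e$ is a $p$-edge and hence survives in $w$, while the Shielding Lemma guarantees that an $\overline{p}$-edge can cross $e$ only when $e$ has already been inactivated by a $p$-edge crosser, so deleting the $\overline{p}$-material in forming $w$ neither creates nor destroys inactivations of $e$. Propagating this observation through the backward-inductive definition of activeness yields that $e$ is active in the prefix of $s$ before $\circ$ iff it is active in the prefix of $w$ before $\circ$, which together with the preservation of crossing gives $e\inact e'$ in $s$ iff $e\inact e'$ in $w$.
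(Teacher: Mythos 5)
Your reduction of Part~1 to the Shielding Lemma is clean and correct as a reduction, but the Lemma itself---which you yourself flag as ``the principal difficulty''---is left unproven, and the induction you sketch does not close as described. After extracting the $p$-move $s_k$ pointing to the interior move $s_c$ and the active edge $g=(s_{a'},s_{b'})$ with $b\le a'<k<b'$, you have no handle on $g$: nothing forces $g$ to be a $p$-edge, so neither the Lemma's statement nor an induction hypothesis formulated for $p$-edges applies to it, and an active $\overline{p}$-edge routinely exposes its interior to the $\overline{p}$-view (the move immediately preceding its tip, for instance), so ``the activeness of $g$'' contradicts nothing by itself. What actually rules the configuration out is visibility: e.g.\ when $b'=k+1$ and $\lambda(s_{b'})=\overline{p}$, the justifier $s_{a'}$ must lie in the $\overline{p}$-view of the prefix before $s_{b'}$, and once that view has descended through $s_k$ to $s_c$ it contains no move in the interval $[b,k)$ at all, so the required pointer is unavailable. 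Making this precise forces you to compute the shape of the relevant view at the moment each crosser is played, which is exactly what the paper does: it applies Proposition~\ref{prop-view} to display the interior of $e'$ as the chain $\circ_{k}\ldots\bullet_{k}\ldots\circ_{1}\ldots\bullet_{1}$, proves by induction on $i$ that each chain edge $(\circ_i,\bullet_i)$ is active in the prefix $s'\,\bullet\,s''$ (visibility of the $\circ_j$ being the decisive ingredient), and concludes that the only active edge $e'$ can cross is some $\efrom{\circ_l}$, whose tip has the polarity of $\circ$. Your Shielding Lemma is in substance a repackaging of this analysis, not a shortcut around it, and as submitted it is a genuine gap.

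Part~2 contains a second gap. From Part~1 you infer that every inactivator of $e$ is a $p$-edge ``and hence survives in $w$''---but the $p$-view discards moves of both polarities wholesale (everything skipped by a jump of an $\overline{p}$-move disappears, $p$-moves included), so polarity alone guarantees no survival. In the paper, survival is structural: the only candidate, namely $e=\efrom{\circ_l}$, has its tip among the chain moves that by construction appear in $w$; and the transfer of activeness is then established by a second explicit induction, showing that each $\efrom{\circ_i}$ is active in the truncated view $w'$ if and only if it is active in $s'\,\bullet\,s''$, using again that all discarded crossers (edges from moves lying inside the segments between $\circ_j$ and $\bullet_j$) are automatically inactive because the chain edges are active. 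Your closing sentence---``propagating this observation through the backward-inductive definition of activeness''---is precisely this second induction, asserted rather than performed. In short, the architecture of your proposal is sound and close in spirit to the paper's, but its two load-bearing steps, the Shielding Lemma and the $s$-versus-$w$ activeness transfer, are exactly the paper's two inductions, and neither is actually carried out.
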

\begin{proof}
%Moves by $\overline{p}$ will be represented as black circles, while those by ${p}$ as white circles. 
Moves by $p$ and $\overline{p}$ will be represented by different colors: for the moment we do not assume that white is for $p$, nor that black is for $p$. We need first to establish some facts.  By hypothesis $s=s'\, \bullet\, s'' \circ\, s'''$; by Proposition \ref{prop-view}, $s''=\circ_{k}\ldots\bullet_{k}\circ_{k-1}\ldots\bullet_{k-1}  \ldots\circ_{2}\ldots\bullet_{2}\,\circ_{1}\ldots\bullet_{1}$
and
 \vspace{-2ex}
$$\begin{tikzpicture}
\node at (2.5,0.04) {$s \ =\ $};
\node  at (3.2,0.1) {$s'$};
\node  (z0) at (3.5,0.02) {$\bullet$};
\node  (z) at (4,0) {$ \circ_{k}$};
\node  at (4.5,0) {$\ldots$};
\node (bk) at (5,0) {$\bullet_{k}$};
\node (ck-1) at (5.5,0) {$\circ_{k-1}$};
\node (z2) at (6.1,0) {$\ldots$};
\node (bk-1) at (6.7,0) {$\bullet_{k-1}$};
\node at (7,1.4)   {$e'$};
\node at (7.2, 0)   {$\ldots$};
\node (c2) at (7.7,0) {$\circ_{2}$};
\node (z2) at (8.2,0) {$\ldots$};
\node (b2) at (8.7,0) {$\bullet_{2}$};
\node  (c1) at (9,0) {$\circ_{1}$};
\node  at (9.5,0) {$\ldots$};
\node  (b1) at (10,0) {$\bullet_{1}$};
\node (d) at (10.5,0.02) {$\circ$};
\node (e) at (10.9,0.1) {$s'''$};
\draw[-latex,bend right]  (d) edge (z0);
\draw[-latex,bend right]  (b1) edge (c1);
\draw[-latex,bend right]  (b2) edge (c2);
\draw[-latex,bend right]  (bk) edge (z);
\draw[-latex,bend right]  (bk-1) edge (ck-1);
\end{tikzpicture}$$
We argue, by induction on $i$, that  each edge $(\circ_{i}, \bullet_{i})$, with $1\leq i\leq k$ is active in $s'\,\bullet\, s''$.\\
 For $i=1$, that holds, since $(\circ_{1}, \bullet_{1})$ is crossed by no edge in $s'\,\bullet\, s''$, being it the last edge. \\
So, suppose $i>1$. Then \vspace{-2ex}
$$\begin{tikzpicture}
\node at (2.5,0.04) {$s \ =\ $};
\node  at (3.2,0.1) {$s'$};
\node  (z0) at (3.5,0.02) {$\bullet$};
\node  (z) at (4,0) {$ \circ_{k}$};
\node  at (4.5,0) {$\ldots$};
\node (bk) at (5,0) {$\bullet_{k}$};
\node  at (5.5,0) {$\ldots$};
\node (ck-1) at (6,0) {$\circ_{i}$};
\node (z2) at (6.5,0) {$\ldots$};
\node (bk-1) at (7,0) {$\bullet_{i}$};
\node at (7,1.4)   {$e'$};
\node at (7.5, 0)   {$\ldots$};
\node (c2) at (8,0) {$\circ_{2}$};
\node (z2) at (8.5,0) {$\ldots$};
\node (b2) at (9,0) {$\bullet_{2}$};
\node  (c1) at (9.3,0) {$\circ_{1}$};
\node  at (9.8,0) {$\ldots$};
\node  (b1) at (10.3,0) {$\bullet_{1}$};
\node (d) at (10.8,0.02) {$\circ$};
\node (e) at (11.3,0.1) {$s'''$};
\draw[-latex,bend right]  (d) edge (z0);
\draw[-latex,bend right]  (b1) edge (c1);
\draw[-latex,bend right]  (b2) edge (c2);
\draw[-latex,bend right]  (bk) edge (z);
\draw[-latex,bend right]  (bk-1) edge (ck-1);
\end{tikzpicture}$$
By induction hypothesis, for any $j<i$, $(\circ_{j}, \bullet_{j})$ is active in $s'\,\bullet\, s''$; therefore every edge from every move $a$ lying between the moves $\circ_{j}$ and $\bullet_{j}$  and such that $\efrom{a}\cross (\circ_{j}, \bullet_{j})$, is by Definition \ref{def-inact} inactive in $s'\,\bullet\, s''$. Then, the only active edges that could cross $(\circ_{i}, \bullet_{i})$ are those of the form $\efrom{\circ_{j}}$, for some $j<i$; but that cannot happen, because by visibility of $s$, $\circ_{j}$ must point to an element  in the $\lambda(\circ_{j})$-view $v$ of the play
$$s'\,\bullet\,\circ_{k}\ldots\bullet_{k}\ldots\circ_{i}\ldots\bullet_{i}  \ldots\circ_{j+1}\ldots\bullet_{j+1}$$
and since $v=v'\,\bullet\,\circ_{k}\,\bullet_{k}\ldots \circ_{i}\,\bullet_{i}\ldots\circ_{j+1}\,\bullet_{j+1}$, it  is not the case that $(\circ_{i},\bullet_{i})\cross \efrom{\circ_{j}}$ in $s$.\\
\begin{enumerate}
\item

Now, assume $e\inact e'=(\bullet,\circ)$ in $s$. By Definition \ref{def-inact} of the inactivation relation, $e$ is active in $s'\,\bullet\, s''$ and $e\cross e'$. For every move $a$ lying between $\circ_{i}$ and $\bullet_{i}$ for some $1\leq i\leq k$ and such that $\efrom{a}\cross (\bullet,\circ)$, we deduce that $\efrom{a}$ is inactive in $s'\,\bullet\, s''$, since $\efrom{a}\cross (\circ_{i}, \bullet_{i})$ and $(\circ_{i}, \bullet_{i})$ is active in $s'\,\bullet\, s''$.  Therefore, it must be the case that $e=\efrom{\circ_{l}}$, for some $1\leq l\leq k$, which establishes that  white is the color for $p$ moves and that $e'$ is a $p$-edge.\\

\item  Let $w$ be the $p$-view of $s$. If  $e'$ is not a $p$-edge, it is impossible both that $e\inact e'$ in $s$, by 1., and that $e\inact e'$ in $w$, because every $\overline{p}$ move in $w$ points to the immediately preceding move; thus the thesis is trivial. We can hence assume that $e'$ is a $p$-edge. Since $e'$ is in $w$, for some sequences $r, r'$ we have 
$$w=r\,\bullet\,\circ_{k}\,\bullet_{k}\circ_{k-1}\,\bullet_{k-1}  \ldots\circ_{2}\,\bullet_{2}\,\circ_{1}\,\bullet_{1}\circ\, r'$$
and for $1 \leq i \leq k$, the edge $(\circ_{i}, \bullet_{i})$ is in $w$. Let
$$w'=r\,\bullet\,\circ_{k}\,\bullet_{k}\circ_{k-1}\,\bullet_{k-1}  \ldots\circ_{2}\,\bullet_{2}\,\circ_{1}\,\bullet_{1}$$ 
We argue, by induction on $i$, that each edge $\efrom{\circ_{i}}$, with $1\leq i\leq k$, is active  in $w'$ if it is active in $s'\,\bullet\, s''$ and is inactive in $w'$ if it is inactive in $s'\,\bullet\, s''$.\\
 For $i=1$, that holds, since $\efrom{\circ_{1}}$ is not crossed by any edge in $w'$ and is thus active in $w'$; moreover, $\efrom{\circ_{1}}$ is active in $s'\,\bullet\, s''$, because $(\circ_{1},\bullet_{1})$ is active in $s'\,\bullet\, s''$ and therefore whenever  $\efrom{\circ_{1}}\cross d$ with $d$ in $s'\,\bullet\, s''$, surely $d\neq  (\circ_{1},\bullet_{1})$, so  one has $d\cross (\circ_{1},\bullet_{1})$ and so $d$ is inactive in $s'\,\bullet\, s''$. \\
 So, suppose $i>1$. 
 \begin{itemize}
 \item If $\efrom{\circ_{i}}$ is active in $s'\,\bullet\, s''$, all the edges $\efrom{\circ_{j}}$, such that $j<i$ and $\efrom{\circ_{i}}\cross \efrom{\circ_{j}}$, are inactive in $s'\,\bullet\, s''$; by induction hypothesis, they are inactive in $w'$ as well, and since there is  no other edge crossing $\efrom{\circ_{i}}$ in $w'$, we have that $\efrom{\circ_{i}}$ is active in $w'$. 
 \item If $\efrom{\circ_{i}}$ is inactive in $s'\,\bullet\, s''$, then there is an active edge $d$ in $s'\,\bullet\, s''$ such that $\efrom{\circ_{i}}\cross d$; but $d$ must be equal to $\efrom{\circ_{j}}$, for some $j<i$, since all the other edges crossing $\efrom{\circ_{i}}$ are inactive, because they are crossed by some active edge $(\circ_{l}, \bullet_{l})$; by induction hypothesis, $\efrom{\circ_{j}}$ is active in $w'$, moreover $\efrom{\circ_{i}}\cross \efrom{\circ_{j}}$ in $w'$ and so $\efrom{\circ_{i}}$ is inactive in $w'$.  
\end{itemize}
Finally, let us prove the thesis. Assume $e\inact e'$ in $s$. By the proof of 1., $e=\efrom{\circ_{l}}$, with $1\leq l\leq k$, and $e$ is active in $s'\,\bullet\, s''$ and thus in $w'$. Therefore $e\inact e'$ in $w$. Assume now  $e\inact e'$ in $w$. Then $e=\efrom{\circ_{l}}$, with $1\leq l\leq k$, and $e$ is active in $w$ and thus in $s'\,\bullet\, s''$. Therefore $e\inact e'$ in $s$.
\end{enumerate}
\end{proof}
Proposition \ref{prop-back2} below provides a simple way to compute the backtracking level of a play: it is enough to determine the backtracking level of the views of its initial segments. This is possible because the inactivation chains of a visible play are exactly those already contained in the views of the play's initial segments. 
\begin{proposition}[Preservation of Inactivation Chains]\label{prop-back2}
Let $s=s_{0}\, s_{1}\ldots s_{k}$ be a visible play.  Let  $e_{1}, e_{2},\ldots, e_{n}$ be edges of $s$, assume $e_{n}=\efrom{s_{i}}$ is a $p$-edge, for some player $p$, and $w$ is the $p$-view of $s_{0}\, s_{1}\ldots s_{i}$. Then
$$e_{1}\inact e_{2}\inact \ldots\inact e_{n}\mbox{ in } s \iff e_{1}\inact e_{2}\inact \ldots\inact e_{n}\mbox{ in } w$$

 \end{proposition}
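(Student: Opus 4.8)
The plan is to prove the equivalence by induction on the length $n$ of the chain, peeling off the topmost link with Proposition \ref{prop-back1}.2 and recursing on the remainder. Two auxiliary observations drive the argument. First, the inactivation relation is \emph{local}: by Definition \ref{def-inact}, whether $e\inact e'$ holds, with $e'=\efrom{s_{j}}$, depends only on the initial segment $s_{0}\ldots s_{j}$ (it asks that $\efrom{s_{j}}$ cross $e$ and that $e$ be active in $s_{0}\ldots s_{j-1}$). Hence for an entire chain whose top edge ends at position $j$, the chain holds in $s$ iff it holds in any initial segment $s_{0}\ldots s_{l}$ with $l\geq j$: if it holds, its endpoints strictly increase along the links (each link $e_{r}\inact e_{r+1}$ forces $e_{r+1}$ to cross $e_{r}$), so they all lie at or below $j\leq l$ and every link is decided inside $s_{0}\ldots s_{l}$; conversely every link holding in the shorter play holds in $s$. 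Since $e_{n}=\efrom{s_{i}}$, this lets me replace $s$ by $s_{0}\ldots s_{i}$ at the outset, so that $w$ becomes exactly the $p$-view of the whole (truncated) play and Proposition \ref{prop-back1}.2 applies directly.

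Second, I would settle the polarity bookkeeping: by Proposition \ref{prop-back1}.1 an inactivating edge shares the polarity of the edge it inactivates, so along $e_{1}\inact\ldots\inact e_{n}$ all edges share the polarity of $e_{n}$ and are therefore $p$-edges. I also use the standard fact that if a $p$-move $m$ occurs in the $p$-view of a play $t$, then the $p$-view of the prefix of $t$ ending at $m$ is precisely the prefix of $\pview{t}$ ending at $m$ (views are computed by a backward, suffix-determined recursion, cf. \cite{HO}).

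For the induction, the base case $n=1$ is immediate, since $e_{n}=\efrom{s_{i}}$ is the edge of the last move $s_{i}$ and hence lies in $w$. For $n>1$, I would first treat the last link: as $e_{n}\in w$ and $e_{n-1}$ is a $p$-edge, Proposition \ref{prop-back1}.2 gives that $e_{n-1}\inact e_{n}$ holds in $s$ iff it holds in $w$. If it fails, the full chain is false in both $s$ and $w$ and we are done. If it holds, then $e_{n-1}$ inactivates $e_{n}$, so by the structural analysis already carried out in the proof of Proposition \ref{prop-back1}.1 the edge $e_{n-1}$ equals $\efrom{\circ_{l}}$ for a spine move $\circ_{l}$ of the view; writing $e_{n-1}=\efrom{s_{i'}}$ with $i'<i$, this exhibits $s_{i'}$ as a $p$-move lying in $w$, and hence $e_{n-1}\in w$. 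The induction hypothesis applied to the shorter chain $e_{1}\inact\ldots\inact e_{n-1}$ (whose top edge $e_{n-1}$ is a $p$-edge) gives its equivalence between $s$ and $w':=\pview{s_{0}\ldots s_{i'}}$; by the view-prefix fact $w'$ is the prefix of $w$ up to $s_{i'}$, and by locality the sub-chain holds in $w'$ iff it holds in $w$. Conjoining this with the already-settled last link yields the desired equivalence of the full chain in $s$ and in $w$.

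The main obstacle I anticipate is the careful interplay between the two views $w$ and $w'$ in the recursive step: one must know both that $e_{n-1}\in w$ (so the top of the sub-chain survives inside the view) and that $w'$ is genuinely an initial segment of $w$ (so that locality transports the sub-chain equivalence from $w'$ to $w$ without distortion). Both rely on the structural description of views in Proposition \ref{prop-view} together with the proof of Proposition \ref{prop-back1}; getting the prefix/suffix bookkeeping exactly right — including the degenerate case in which the last link fails and the chain collapses to falsity on both sides — is where the argument must be handled with care.
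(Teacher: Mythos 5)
Your proof is correct and takes essentially the same route as the paper's, which likewise transfers the chain between $s$ and $w$ by repeated application of Proposition \ref{prop-back1}. Your explicit auxiliary steps — locality of $\inact$ (so $s$ may be truncated at $s_{i}$), polarity propagation along the chain, spine membership of each $e_{r}$, and the view-prefix fact — are precisely the bookkeeping that the paper's terse two-line argument leaves implicit, so your version is a faithful elaboration rather than a different method.
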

\begin{proof}  $\Longrightarrow$)
Assume $e_{1}\inact e_{2}\inact \ldots\inact e_{n}\mbox{ in } s$. By definition of view and visibility of $s$, $e_n$ belongs to the $p$-view $w$ of $s_{0}\,s_{1}\ldots s_{i}$. By repeated application of Proposition \ref{prop-back1}, $e_{n-1}\inact e_{n}$ in $w$, $e_{n-2}\inact e_{n-1}$ in $w$, $\ldots$, $e_{1}\inact e_{2}$ in $w$, which is the thesis. \\
$\Longleftarrow$) As in the other direction.
\end{proof}
We prove now the key property of maximal backtracking moves and of the edges from them, which states that if a player $p$ of backtracking level $n$ in some visible play has made a move of  backtracking level $n$, then the edge from that move cannot be crossed by any other edge, let alone inactivated. These are the edges that will play the role that the edges from unanswered moves played in Coquand's termination proof \cite{Coquand}.

\begin{proposition}[Maximal Backtracking Edges]\label{prop-back3}
Suppose $s=s_{0}\, s_{1}\ldots s_{k}$ is a visible play  and $p$ a player of backtracking level $n$ in $s$. Assume $\lambda(s_{i})=p$ and $\blev(s_{i})=n$.  Then there is no $j$ such that $\efrom{s_{i}}\cross \efrom{s_{j}}$ in $s$.
\end{proposition}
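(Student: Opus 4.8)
The plan is to argue by contradiction: assuming some edge crosses $\efrom{s_i}$, I would manufacture a $p$-move whose backtracking level exceeds $n$, contradicting the hypothesis that $p$ has backtracking level $n$ in $s$. The first observation is that any edge crossing $\efrom{s_i}$ must end strictly after $s_i$: writing $\efrom{s_i}=(s_a,s_i)$ and a crossing edge as $\efrom{s_l}=(s_c,s_l)$, the crossing inequalities $a<c<i<l$ force $l>i$. Hence the set $J$ of indices $j$ with $\efrom{s_i}\cross\efrom{s_j}$ lies entirely past $i$. Also, a move answering its immediate predecessor cannot be crossed (a crossing edge would need a target $s_c$ with $i-1<c<i$), so if $\efrom{s_i}$ is crossed then it is a genuine backtracking edge, $n\geq 1$, and there is a maximal chain $e_1\inact\cdots\inact e_{n-1}\inact\efrom{s_i}$ in $s$.

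The key idea is to pick the \emph{earliest} crossing edge, $j^*=\min J$, and to certify that $\efrom{s_i}$ is active exactly at the moment $\efrom{s_{j^*}}$ is played. I would show $\efrom{s_i}$ is active in the prefix $s_0\,s_1\ldots s_{j^*-1}$: by Definition \ref{def-inact} it suffices that no edge of this prefix crosses $\efrom{s_i}$, and any such crossing edge $\efrom{s_l}$ would have $l\leq j^*-1$ together with $l\in J$, contradicting minimality of $j^*$. With no crossing edges present in the prefix, $\efrom{s_i}$ is vacuously active there. Combining this activity with the crossing $\efrom{s_i}\cross\efrom{s_{j^*}}$, the definition of the inactivation relation immediately yields $\efrom{s_i}\inact\efrom{s_{j^*}}$ in $s$.

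From here I would invoke the two preservation facts. First, Proposition \ref{prop-back1}(1), applied to the $p$-edge $\efrom{s_i}$ and the inactivation $\efrom{s_i}\inact\efrom{s_{j^*}}$, guarantees that $\efrom{s_{j^*}}$ is again a $p$-edge, so $\lambda(s_{j^*})=p$. Second, appending this single inactivation to the maximal chain for $s_i$ produces a chain $e_1\inact\cdots\inact e_{n-1}\inact\efrom{s_i}\inact\efrom{s_{j^*}}$ in $s$ of length $n$; and since $s_{j^*}$ points strictly before its immediate predecessor (its target $s_c$ satisfies $c<i$, while $i\leq j^*-1$), the move $s_{j^*}$ is a genuine backtracking, so Definition \ref{def-backlevel} forces $\blev(s_{j^*})\geq n+1$. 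As $s_{j^*}$ is a $p$-move, this contradicts the hypothesis that the backtracking level of $p$ in $s$ is $n$, and therefore no $j$ with $\efrom{s_i}\cross\efrom{s_j}$ can exist.

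The step I expect to be the real crux is the selection of the minimal crossing edge $\efrom{s_{j^*}}$ together with the activity check: this is precisely what upgrades a mere crossing (which carries no player information and could a priori involve the opponent $\overline{p}$) into a genuine inactivation. Once the crossing is promoted to an inactivation, Proposition \ref{prop-back1} keeps us inside player $p$ and the chain-extension argument becomes routine. Everything else is a direct unwinding of Definitions \ref{def-inact} and \ref{def-backlevel}.
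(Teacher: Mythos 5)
Your proof is correct and takes essentially the same route as the paper's: you pick the minimal crossing edge, observe that $\efrom{s_{i}}$ is then vacuously active in the prefix ending just before it so the crossing is promoted to an inactivation $\efrom{s_{i}}\inact\efrom{s_{j^{*}}}$, invoke Proposition \ref{prop-back1} to conclude $\efrom{s_{j^{*}}}$ is a $p$-edge, and extend the maximal chain to force $\blev(s_{j^{*}})\geq n+1$, contradicting the hypothesis. The only difference is that you make explicit a few checks the paper leaves implicit (that crossing edges end strictly after $s_{i}$, the degenerate case $n=0$, and that $s_{j^{*}}$ does not point to its immediate predecessor), which only strengthens the write-up.
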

\begin{proof}
Let us consider, for the sake of contradiction, the smallest $j$ such that $\efrom{s_{i}}\cross \efrom{s_{j}}$ in $s$. By choice of $j$,  $\efrom{s_{i}}$ is not crossed by any edge in $s_{0}\,s_{1}\ldots s_{j-1}$, therefore $\efrom{s_{i}}$ is active in $s_{0}\,s_{1}\ldots s_{j-1}$ and so $\efrom{s_{i}}\inact \efrom{s_{j}}$ in $s$.  By Proposition \ref{prop-back1}, $\efrom{s_{j}}$ is a $p$-edge. Since $\blev(s_{i})= n$, there is in $s$ an inactivation chain 
$$e_{1}\inact e_{2}\inact \ldots\inact e_{n-1}\inact \efrom{s_{i}}\inact \efrom{s_{j}}$$ But this means that $s_{j}$ and thus $p$ are of backtracking level $n+1$ in $s$, which contradicts the hypothesis. 
\end{proof}

The next Theorem is the first major result of this paper: if two strategies respectively of backtracking level $n$ and $m$ play one against the other, they generate a visible play of backtracking level equal to the maximum among $n$ and $m$. However, the strategy with backtracking level equal to the minimum among $n$ and $m$ will maintain its backtracking  level also in the generated play. Therefore, the real backtracking level of the generated play is equal to the minimum among $n$ and $m$.

\begin{theorem}[Max-Min Property of Backtracking Levels Interaction]\label{thm-maxmin}\mbox
%{\color{red}{Let $s=s_{0}\,s_{1}\ldots s_{k}$ be a visible play. Then 
%$$P\blev(s)=\mathsf{max}\{P\blev(\pview{s_{0}\,s_{1}\ldots s_{i}})\ |\ 0\leq i\leq k\}$$
%$$O\blev(s)=\mathsf{max}\{O\blev(\oview{s_{0}\,s_{1}\ldots s_{i}})\ |\ 0\leq i\leq k\}$$}}
 Let $\sigma$ be a Player $n$-backtracking strategy, $\tau$  an Opponent $m$-backtracking strategy and assume $s\in\sigma\star\tau$. Then $s$ is a visible play of backtracking level less than or equal to $\mathsf{max}(n,m)$ and of real backtracking level less than or equal to $\mathsf{min}(n,m)$.

\end{theorem}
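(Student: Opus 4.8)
The plan is to reduce the whole statement to the two inequalities $P\blev(s)\le n$ and $O\blev(s)\le m$, where $P\blev(s)$ and $O\blev(s)$ denote the backtracking levels of Player and Opponent in $s$. Once these are in place both conclusions follow at once: since $\blev(s)=\mathsf{max}(P\blev(s),O\blev(s))$ we get $\blev(s)\le\mathsf{max}(n,m)$, and since the real backtracking level of $s$ is $\mathsf{min}(P\blev(s),O\blev(s))$, monotonicity of $\mathsf{min}$ gives $\mathsf{min}(P\blev(s),O\blev(s))\le\mathsf{min}(n,m)$. The conceptual heart of the theorem is thus that Player's backtracking level in the interaction is governed by $\sigma$ \emph{alone} and Opponent's by $\tau$ \emph{alone}, with no inflation coming from the adversary's moves.

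First I would establish that $s$ is a visible play. Writing $s=s_{0}\,s_{1}\ldots s_{k}$, the definition of $\sigma\star\tau$ together with the closure of strategies under removal of the last two moves shows, by downward induction, that every even-length prefix of $s$ ending in a Player move belongs to $\sigma$ and every odd-length prefix ending in an Opponent move belongs to $\tau$. Visibility then follows move by move: if $\lambda(s_{i})=P$ then $s_{0}\ldots s_{i}\in\sigma$, so by visibility of $\sigma$ the move $s_{i}$ points into $\pview{s_{0}\ldots s_{i-1}}$; dually for Opponent moves using $\tau$.

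Next I would bound $P\blev(s)$, the Opponent case being symmetric. Fix a Player move $s_{i}$ and set $w=\pview{s_{0}\ldots s_{i}}$. Since $\efrom{s_{i}}$ is a $P$-edge, Proposition \ref{prop-back2} transports every inactivation chain $e_{1}\inact\cdots\inact\efrom{s_{i}}$ of $s$ to an identical chain in $w$ and conversely, so the longest such chains have equal length in $s$ and in $w$. One routine check handles the boundary between $\blev=0$ and $\blev\ge 1$: the immediate predecessor of $s_{i}$ in $w$ is exactly $s_{i-1}$, because the last move of $\pview{s_{0}\ldots s_{i-1}}$ is the Opponent move $s_{i-1}$ itself; hence $s_{i}$ answers its immediate predecessor in $w$ iff it does so in $s$. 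Consequently the value of $\blev(s_{i})$ computed in $s$ equals the one computed in $w$, so $\blev(s_{i})\le\blev(w)$. Finally $w=\pview{s_{0}\ldots s_{i}}$ with $s_{0}\ldots s_{i}\in\sigma$, whence $\blev(w)\le\blev(\sigma)=n$. Taking the maximum over all Player moves gives $P\blev(s)\le n$, and symmetrically $O\blev(s)\le m$, which finishes the proof.

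The main obstacle is precisely the step controlled by Proposition \ref{prop-back2}: a priori an inactivation chain of length $n+m$ could be assembled inside $s$ by interleaving the retractions of both players, so that a single Player move would acquire backtracking level far above $n$. What rules this out is that, by Proposition \ref{prop-back1}, inactivation never mixes the two colours, so every chain ending in a $P$-edge consists entirely of $P$-edges and already lives inside a single Player view, which is in turn constrained by $\sigma$. I expect the only genuinely delicate bookkeeping to be the $\blev=0$ versus $\blev\ge 1$ boundary above and the verification that the prefixes of $s$ really lie in the component strategies; everything else is a direct application of the preservation results.
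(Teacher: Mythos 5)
Your proposal is correct and follows essentially the same route as the paper's own proof: visibility via prefix-closure of $\sigma$ and $\tau$, then transporting each inactivation chain ending in a $p$-edge into the $p$-view of the corresponding prefix by Proposition \ref{prop-back2} (with colour-purity of chains from Proposition \ref{prop-back1}), and bounding that view's level by the strategy's. Your extra check of the $\blev=0$ versus $\blev\geq 1$ boundary is a sound detail the paper leaves implicit, but it changes nothing in substance.
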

\begin{proof}
 In order to prove that $s$ is visible, suppose that $s=s'\, a$. Assume $\lambda(a)=P$. Since by Definition \ref{def-stratvisint} of interaction, $s'\,a \in \sigma$ and $s'\in \tau$, and $\sigma$ and $\tau$ are closed by even and odd prefixes, every prefix of $s$ belongs to $\sigma$ or $\tau$. Since $\sigma$ and $\tau$ are visible, we are done. If $\lambda(a)=O$, exchange $\sigma$ and $\tau$ in the previous argument. \\ 
%In order to prove that  the backtracking level of $s$ is less than or equal to $\mathsf{max}(n,m)$, suppose that $c=e_{1}\inact e_{2}\inact \ldots\inact e_{i}$ is an inactivation chain in $s$. By Proposition \ref{prop-back2}, $c$ is an inactivation chain in the $p$-view $w$ of an initial segment $s'$ of $s$. Since $s'\in\sigma$ or $s'\in\tau$, and $\sigma$ and $\tau$ are of backtracking level less or equal than $\mathsf{max}(n,m)$, by Definition \ref{def-backlevel}  $w$ has backtracking level less or equal than $\mathsf{max}(n,m)$, so $i\leq \mathsf{max}(n,m)$. \\
In order to prove the thesis, it is enough to show that Player and Opponent have backtracking level in $s$ less than or equal to respectively $n$ and $m$. Suppose that $c=e_{1}\inact e_{2}\inact \ldots\inact e_{i}$ is an inactivation chain in $s$, where $e_{i}$ is a $p$-edge, for some player $p$. By Proposition \ref{prop-back2}, $c$ is an inactivation chain in the $p$-view $w$ of the initial segment $s'$ of $s$ terminating with the $p$-move corresponding to $e_{i}$. Since $s'\in\sigma$ when $p=P$, and $s'\in\tau$ when $p=O$,  by Definition \ref{def-backlevel}, $w$ has backtracking level less than or equal to $n$ if $p=P$ and less or equal than $m$ if $p=O$. We conclude that $i\leq n$ if $p=P$, and $i\leq m$ if $p=O$. 
\end{proof}

\section{The Trimming Operation}\label{sec-trimm}

In this Section we study the Trimming operation sketched in Section \S\ref{sec-proofidea}, we show that it transforms visible plays into visible plays and we determine the resulting backtracking level, thus removing the third obstacle in our way to the main results.

We will  show in Proposition \ref{prop-ord} that any visible play of real backtracking level $n$ can be interpreted as a sequence of visible plays of real backtracking level \emph{less} than $n$. These plays cannot grow infinitely and are compressed and stretched like springs by the player of backtracking level $n$, who, having a finite amount of energy, at some point must stop.
%Each move of backtracking level $n$ by the \emph{principal} player of the play can be seen as interrupting and cutting down a suffix of the last play in that sequence obtaining a new play which will be the new last element of the sequence. 
In order to identify such sequence of plays, we want to describe an operation of identifying in  any visible play $s$ of real backtracking level $n$ a subsequence which represents  a visible play $s'$ of real backtracking level strictly less than $n$. The idea is to erase whatever has been retracted by moves of backtracking level $n$: the sequence $s'$ is obtained from $s$ by removing all the moves that are in the interior of an edge from any move of backtracking level $n$ made by the principal player of $s$. More generally, we define an operation removing every move which is in the interior of an edge from a move of some fixed backtracking level made by a fixed player.

\begin{definition}[Trimming of a Play]\label{def-trim}
Suppose $s=s_{0}\, s_{1}\ldots\, s_{k}$ is any visible play, $n\in \mathbb{N}$, $n>0$ and $p$ a player. Let $\mu=(n,p)$. We define $\trim{\mu}{s}$ as the subsequence of $s$ obtained from $s$ by removing all the moves $s_{l}$ such that for some $i, j$ it holds: $a)$ $j < l < i$ and $\efrom{s_{i}}=(s_{j}, s_i)$; $b)$ $\blev(s_{i})=n$ and $\lambda(s_{i})=p$.
\end{definition}
It will be useful to exploit the following simple characterization of the Trimming operation.

\begin{lemma}\label{lemma-trim}
Suppose $s=s_{0}\, s_{1}\ldots\, s_{k}$ is any visible play, $p$ a player of backtracking level  $n$ in $s$  and $\mu=(n,p)$. 
\begin{enumerate}
\item
If  $\blev({s_{i}})\neq n$  or $\lambda(s_{i})\neq p$,  %assuming  $s_{i}$ is in $\trim{\mu}{s}$,
 then 
$$ \trim{\mu}{(s_{0}\, s_{1}\ldots s_{i})}=\trim{\mu}{(s_{0}\, s_{1}\ldots s_{i-1})} s_{i}$$
%for some subsequence $s_{i}\ldots s_{k}$ of $s$.
%$$XXXXX \trim{\mu}{(s_{0}\, s_{1}\ldots s_{k})}=\trim{\mu}{(s_{0}\, s_{1}\ldots s_{j})}\ldots s_{i}\dots{s_{k}}$$ 
%If  $\blev({s_{i}})\neq n$  or $\lambda(s_{i})\neq p$, assuming  $\efrom{s_{i}}=(s_{j}, s_{i})$, then $$ \trim{\mu}{(s_{0}\, s_{1}\ldots s_{i})}=\trim{\mu}{(s_{0}\, s_{1}\ldots s_{i-1})} s_{i}$$ $$ \trim{\mu}{(s_{0}\, s_{1}\ldots s_{i})}=\trim{\mu}{(s_{0}\, s_{1}\ldots s_{j})}\ldots s_{i}$$ 
\item If $\blev({s_{i}})=n$  and $\lambda(s_{i})=p$, assuming  $\efrom{s_{i}}=(s_{j}, s_{i})$, then
$$ \trim{\mu}{(s_{0}\, s_{1}\ldots s_{i})}=\comment{\trim{\mu}{(s_{0}\, s_{1}\ldots s_{j-1})}s_{j}\, s_{i}=}\trim{\mu}{(s_{0}\, s_{1}\ldots s_{j})}\, s_{i}$$

\item Assume 
$$s=r\,\circ_{z}\ldots\bullet_{z}  \ldots\circ_{2}\ldots\bullet_{2}\circ_{1}\ldots\bullet_{1}\, r'$$
and that: $r$ is the empty sequence or $r\in I$; $r'=s_{i} \ldots s_{k}$ and either $\bullet_{1}$ is in $\trim{\mu}{s}$ or  $s_{i}$ in $\trim{\mu}{s}$; for $1\leq i \leq z$, $\bullet_{i+1}$ immediately precedes $\circ_{i}$ and there is an edge $(\circ_{i}, \bullet_{i})$ in $s$. Then 
$$\trim{\mu}{s}=r\, \circ_{i_m}\ldots\bullet_{i_m}  \ldots\circ_{i_{2}}\ldots\bullet_{i_2}\circ_{i_1}\ldots\bullet_{i_{1}}\,r''$$
where $r''$ is a subsequence of $r'$ and $1\leq i_{1}< i_{2}\ldots < i_{m}\leq z$.

\end{enumerate}
\end{lemma}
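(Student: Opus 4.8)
The plan is to prove the three items in order, using items~1 and~2 as the computational engine for item~3. The one fact I would set up first is that, by Proposition~\ref{prop-back2}, the backtracking level of a move $s_i$ is read off from the $p$-view of the prefix $s_0\ldots s_i$, so it does not depend on moves played later; consequently the condition ``$\blev(s_i)=n$ and $\lambda(s_i)=p$'' that makes $\efrom{s_i}$ a \emph{deleting edge} in Definition~\ref{def-trim} is an absolute property of $s_i$, and the deletions occurring among indices $\le i-1$ are literally the same in $\trim{\mu}{(s_0\ldots s_{i-1})}$ and in $\trim{\mu}{(s_0\ldots s_i)}$. For item~1 I would then note that the last move of a prefix lies in the interior of no edge, so $s_i$ is always kept, and that when $s_i$ is not a level-$n$ $p$-move it is not a deleting edge and hence triggers no new deletion; the identity $\trim{\mu}{(s_0\ldots s_i)}=\trim{\mu}{(s_0\ldots s_{i-1})}\,s_i$ follows at once.

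For item~2 the move $s_i$ is now a deleting edge $(s_j,s_i)$, so it erases exactly the interior indices $j<l<i$ while $s_i$ and the endpoint $s_j$ survive. The only thing to check is that no move at an index $\le j$ changes status. Such a move can be deleted only by a deleting edge $\efrom{s_{i'}}$ whose source lies strictly before $j$; the case $i'=i$ is excluded because $\efrom{s_i}=(s_j,s_i)$ has source exactly $j$, and the case $j<i'<i$ would give the position pattern $\mathrm{src}<j<i'<i$, i.e.\ $\efrom{s_{i'}}\cross\efrom{s_i}$, contradicting Proposition~\ref{prop-back3} applied to the maximal-level move $s_{i'}$ of $p$. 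Thus the deletions at indices $\le j$ agree and $\trim{\mu}{(s_0\ldots s_i)}=\trim{\mu}{(s_0\ldots s_j)}\,s_i$.

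For item~3 I would argue by induction on the length of $s$, peeling the last move and applying item~1 or item~2; by item~2 a deleting last move simply ``rolls back'' the trimmed sequence to the trimming of the prefix ending at its justifier, much as the orphan edges behave in Coquand's termination proof. Two parts are routine: the initial segment $r$ is never touched, since an initial move heads no edge's interior; and because trimming only deletes moves and preserves their order, whatever survives of $r'$ is automatically a subsequence $r''$ sitting after all surviving view moves. The real content is to show that the surviving view moves still occur in \emph{complete} nested pairs $\circ_{i_\ell}\ldots\bullet_{i_\ell}$ with $i_1<\cdots<i_m$, that is, to rule out a deleting edge $e=(s_c,s_d)$ that removes exactly one endpoint of some view pair $(\circ_i,\bullet_i)$.

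This is the main obstacle, and I expect it to split into the two geometric situations that Proposition~\ref{prop-view} makes available. If $e$ deleted $\circ_i$ but kept $\bullet_i$, then its target would have to sit strictly inside $(\circ_i,\bullet_i)$, forcing $e\cross(\circ_i,\bullet_i)$ and contradicting Proposition~\ref{prop-back3} (the endpoint case $s_d=\bullet_i$ is impossible, since the justifier of $\bullet_i$ is $\circ_i$, not $s_c$). If instead $e$ deleted $\bullet_i$ but kept $\circ_i$, its source $s_c$ lies in $[\circ_i,\bullet_i)$ and its target $s_d$ lies past $\bullet_i$. When $s_c$ is strictly inside the pair, the backtrack $\bullet_i\to\circ_i$ has already erased $s_c$ from the relevant view, so the $p$-move $s_d$ cannot legally point to it; when $s_c=\circ_i$, the edge $e$ straddles the whole tail of the prefix, and here I would use alternation, since the first move of $r'$ has polarity $\overline{p}$ while $s_d$ has polarity $p$ and hence lies strictly beyond it, to conclude that the interior of $e$ contains both $\bullet_1$ and the first move of $r'$; both are then deleted, contradicting the standing hypothesis that at least one of them survives $\trim{\mu}{s}$. (If $s_d$ lay inside the prefix rather than in $r'$, the same straddling would instead make $e$ crossed by a later view edge, again contradicting Proposition~\ref{prop-back3}.) Discharging these crossing and visibility checks is where the hypotheses of the lemma, in particular the clause requiring $\bullet_1$ or the first move of $r'$ to survive, earn their keep, and it is the step I expect to demand the most care.
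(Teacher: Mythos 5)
Your items 1 and 2, and in item 3 your case A and your case B2, follow essentially the paper's own argument. The genuine gap is in case B1. The claim that ``the backtrack $\bullet_i\to\circ_i$ has already erased $s_c$ from the relevant view, so the $p$-move $s_d$ cannot legally point to it'' is false as a general principle for visible plays: views are not permanently erased. The opposite player can legally point into the interior of a backtracked-over interval, and through the view recursion this re-exposes the interior even to the player who backtracked --- this is exactly the configuration the paper exhibits in the discussion preceding Proposition \ref{prop-back1}, where a visible play has $\efrom{\circ_{1}}\cross\efrom{\circ_{2}}$ with $\circ_{2}$ pointing strictly inside the edge $(\bullet_{1},\circ_{1})$; it is the whole reason the inactivation relation $\inact$ is not simply the crossing relation $\cross$. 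Note also that nothing in the hypotheses fixes the polarity of the pair $(\circ_i,\bullet_i)$ relative to $p$, so you cannot even appeal to the view ``through $\bullet_i$'' being the one relevant to $s_d$. To close B1 you must locate $s_d$, exactly as you do in B2 and as the paper does in its proof of claim ii): if $s_d$ lies strictly inside a later view pair $(\circ_h,\bullet_h)$, then $\efrom{s_d}\cross(\circ_h,\bullet_h)$, contradicting Proposition \ref{prop-back3}; if $s_d$ lies in $r'$ strictly beyond its first move, the interior of $e$ swallows both $\bullet_1$ and the first move of $r'$, against the survival hypothesis; if $s_d=\bullet_h$, its justifier is $\circ_h\neq s_c$; and if $s_d$ is a chain move $\circ_h$ or the first move of $r'$, then by the view shape (Proposition \ref{prop-view}) and visibility it must point to $r$ or one of the $\bullet_j$, none of which lies strictly inside the pair. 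This on-chain case also shows your B2 parenthetical is misstated: when $s_d$ sits \emph{on} the chain rather than inside a later pair, $e$ is not ``crossed by a later view edge''; the exclusion there comes from the view shape (or from the alternation condition on enabling), not from a crossing.

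A smaller omission: reducing item 3 to ``no deleting edge removes exactly one endpoint'' establishes that chain moves survive in pairs, but the displayed conclusion (and its use in Proposition \ref{lem-trim}, where $\bullet_{i_{\ell+1}}$ must immediately precede $\circ_{i_\ell}$ in $\trim{\mu}{s}$ for the view computations to go through) also needs that when a pair $(\circ_h,\bullet_h)$ dies, every move strictly between $\circ_h$ and $\bullet_h$ is deleted as well --- the paper proves this as its claim ii). This does follow from your exclusions with one extra line: once B-configurations are ruled out, any deleting edge whose interior contains $\bullet_h$ must have its source strictly to the left of $\circ_h$, so its interior contains the whole segment from $\circ_h$ to $\bullet_h$; but the line needs to be said, since endpoint-pairing alone does not forbid a surviving stray move inside a dead pair.
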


\begin{proof}\mbox{}
\begin{enumerate}
\item %Since $s_{i}$ is in $\trim{\mu}{s}$,  for no $l, l'$, with $l' < i < l$, it is the case that $\blev(s_{l})=n$ and a $p$-edge $(s_{l'}, s_{l})$ is in $s$. Moreover 
By hypothesis $\blev(s_{i})\neq n$ or $\lambda(s_{i})\neq p$. Therefore the moves $s_{l}$, with $l< i$, that are in $\trim{\mu}{(s_{0}\,s_{1}\ldots s_{i})}$, are  exactly those in $\trim{\mu}{(s_{0}\,s_{1}\ldots s_{i-1})}$; since $s_{i}$ is in $\trim{\mu}{(s_{0}\,s_{1}\ldots s_{i})}$,  we obtain the thesis.
\vspace{1ex}
\item %Since $s_{i}$ is in $\trim{\mu}{s}$,  for no $l, l'$, with $l' < i < l$, it is the case that $\blev(s_{l})=n$ and a $p$-edge $(s_{l'}, s_{l})$ is in $s$. Furthermore,
 For no $l, l'$, with $l' < j < l < i$, it is the case that $\blev(s_{l})=n$ and a $p$-edge $(s_{l'}, s_{l})$ is in $s$, otherwise $\efrom{s_{l}}\cross \efrom{s_{i}}$, which is impossible by Proposition \ref{prop-back3}. Therefore the moves $s_{l}$, with $l\leq j$, that are in $\trim{\mu}{(s_{0}\,s_{1}\ldots s_{i})}$, are  exactly those in $\trim{\mu}{(s_{0}\,s_{1}\ldots s_{j})}$. Moreover all the moves $s_{l}$, with $j<l<i$ are not in $\trim{\mu}{(s_{0}\,s_{1}\ldots s_{i})}$, which is the thesis.
\vspace{1ex}

%is not crossed by any edge in $s_{0}\, s_{1}\ldots s_{i}$, by Definition \ref{def-trim}, $s_{i}$ is the last element of  $\trim{\mu}{(s_{0}\, s_{1}\ldots s_{i})}$. Moreover,  since  $\blev({s_{i}})\neq n$  or $\lambda(s_{i})\neq{p}$, a move different from $s_{i}$ is in $\trim{\mu}{(s_{0}\, s_{1}\ldots s_{i-1})}$ if and only if is in $\trim{\mu}{(s_{0}\, s_{1}\ldots s_{i})}$. Hence $$ \trim{\mu}{(s_{0}\, s_{1}\ldots s_{i})}=\trim{\mu}{(s_{0}\, s_{1}\ldots s_{i-1})} s_{i}$$ Finally, for no $l,l'$, with $l'\leq j< l <i$, it is the case that $\blev({s_{l}})=n$ and $s_{l'}\cross s_{l}$: otherwise, $\efrom{s_{l}}\cross \efrom{s_{i}} $, which by Theorem \ref{prop-back3} cannot happen. As a consequence, $$\trim{\mu}{(s_{0}\, s_{1}\ldots\, s_{i})}=\trim{\mu}{(s_{0}\, s_{1}\ldots\, s_{j})}\ldots s_{i}$$

 \item By hypothesis $$s=r\, \circ_{z}\ldots\bullet_{z}  \ldots\circ_{2}\ldots\bullet_{2}\circ_{1}\ldots\bullet_{1}\, r'$$ where, for $1\leq j \leq z$, $\bullet_{j+1}$ immediately precedes $\circ_{j}$ and there is an edge $(\circ_{j}, \bullet_{j})$ in $s$. We must show $$\trim{\mu}{s}=r\,\circ_{i_m}\ldots\bullet_{i_m}  \ldots\circ_{i_{2}}\ldots\bullet_{i_2}\circ_{i_1}\ldots\bullet_{i_{1}}\, r''$$
By hypothesis $r'=s_{i}\ldots s_{k}$ and either $\bullet_{1}$ is in $\trim{\mu}{s}$ or  $\circ_{0}:=s_{i}$ is in $\trim{\mu}{s}$; therefore,  every $p$ move $s_{l}$ of $s$, such that $\blev(s_{l})=n$ and $i<l$, points in $s$ to a move $s_{l'}$, with $i-1 \leq l'$. %Furthermore,  for no $l,l'$, with $l'< j< l <i$, it is the case that $s_{l}$ is a $p$ move such that $\blev({s_{l}})=n$ and $\efrom{s_{l}}=(s_{l'}, s_{l})$: otherwise, $\efrom{s_{l}}\cross (\bullet,\circ) $, which by Proposition \ref{prop-back3} cannot happen. Therefore the moves of $s_{0}\ldots s_{j-1}\, \bullet$ that are in $\trim{\mu}{s}$ are exactly those in $\trim{\mu}{(s_{0}\ldots s_{j-1}\,\bullet)}$.
What we have to show now is that for every $l$, with $1\leq l\leq z$: i) if $\bullet_{l}$ is in $\trim{\mu}{s}$, then $\circ_{l}$ is in $\trim{\mu}{s}$ as well, and ii) if $\bullet_{l}$ is not in $\trim{\mu}{s}$, then  $\circ_{l}$ and all the moves between $\circ_{l}$ and $\bullet_{l}$ are not in $\trim{\mu}{s}$ either.
  $s$ is represented as:
$$\begin{tikzpicture}

%\node  at (0.5,0) {$\ldots$};
\node at (3,0.04)   {$r$};
\node  (z) at (3.5,0) {$ \circ_{z}$};
\node  at (4,0) {$\ldots$};
\node (bk) at (4.5,0) {$\bullet_{z}$};
\node  at (5,0) {$\ldots$};
\node (ck-1) at (5.5,0) {$\circ_{l}$};
\node (z2) at (6.15,0) {$\ldots$};
\node (bk-1) at (6.7,0) {$\bullet_{l}$};
\node at (7.2, 0)   {$\ldots$};
\node (c2) at (7.7,0) {$\circ_{2}$};
\node (z2) at (8.2,0) {$\ldots$};
\node (b2) at (8.7,0) {$\bullet_{2}$};
\node  (c1) at (9,0) {$\circ_{1}$};
\node  at (9.5,0) {$\ldots$};
\node  (b1) at (10,0) {$\bullet_{1}$};
\node (d) at (10.5,0.09) {$r'$};
%\draw[-latex,bend right]  (d) edge (z0);
\draw[-latex,bend right]  (b1) edge (c1);
\draw[-latex,bend right]  (b2) edge (c2);
\draw[-latex,bend right]  (bk) edge (z);
\draw[-latex,bend right]  (bk-1) edge (ck-1);
\end{tikzpicture}$$

To show i), assume $\bullet_{l}$ is in $\trim{\mu}{s}$ and, for the sake of contradiction, that $\circ_{l}$ is not in it. Then there is a $p$ move $a$ in $s$  such that $\blev({a})=n$, $a$ points in $s$ to a move on the left of $\circ_{l}$, $a$ is on the right of $\circ_{l}$ but it is not the case that $a$ is on the right of $\bullet_{l}$. This means that $a$ lies between  $\circ_{l}$ and $\bullet_{l}$. But then $\efrom{a}\cross (\circ_{l}, \bullet_{l})$, impossible by Proposition \ref{prop-back3}.

To show ii), assume $\bullet_{l}$ is not in $\trim{\mu}{s}$ and, for the sake of contradiction, that either $\circ_{l}$ or some move between $\circ_{l}$ and $\bullet_{l}$ is in it. Then there is a $p$  move $a$ such that $\blev({a})=n$, and $(\circ_{l},\bullet_{l})\cross \efrom{a}$ or $a$ points to $\circ_{l}$. As before, $a$ cannot lie between $\circ_{h}$ and $\bullet_{h}$, for any $h< l$, otherwise $\efrom{a}\cross (\circ_{h}, \bullet_{h})$, impossible by Proposition \ref{prop-back3}. We have already showed that $a$ cannot lie on the right of the first move of $r'$. Therefore $a=\circ_{h}$, for some $0\leq h < l$. But by visibility of $s$, this is impossible: $\circ_{h}$ should point to one of the moves $\bullet_{1}, \bullet_{2},\ldots, \bullet_{z}, r$.
%{then  $\circ_{l}$ and all the moves between $\circ_{l}$ and $\bullet_{l}$ are not in it as well.Which moves of $s$ lying between $\bullet$ and $\circ$ are not in $\trim{\mu}{s}$? We have to consider the edges coming from moves of backtracking level $n$ in $s$, because all the moves in the interior will be erased in passing to $\trim{\mu}{s}$.  If $a$ lies  between the  moves $\circ_{i}$ and $\bullet_{i}$, for some $1\leq i\leq k$, then ${a}$ cannot be at the same time of backtracking level $n$ and point to a move on the left of $\bullet_{i}$ , otherwise $\efrom{a}\cross (\circ_{i}, \bullet_{i})$: we can thus ignore such edges, because they remove moves that confirm the shape of $\trim{\mu}{s}$. The edges of the form $(\circ_{i}, \bullet_{i})$ are no problem. If $\circ$ is of backtracking level $n$, then $\trim{\mu}{s}= \ldots\bullet\,\circ\ldots$ and we are done. We are only left to study the edges $\efrom{\circ_{i}}$, for $1\leq i\leq k$. $\efrom{\circ_{i}}$ cannot point to a move on the left of $\bullet$, otherwise we would have $\efrom{\circ_{i}}\cross (\bullet, \circ)$. Hence, by visibility of $s$, $\efrom{\circ_{i}}$ must point to some $\bullet_{j}$ for $j\geq i$. This confirms again that $\trim{\mu}{s}$ has the desired property, which concludes the proof. }
\end{enumerate}
\end{proof}
The Trimming operation takes a visible play $s$ and returns a sequence of moves $s^{\mu}$. It is not evident that $\trim{\mu}{s}$ is a visible play, for we have not yet studied in detail the shape of the views in $\trim{\mu}{s}$, 
nor it is immediate to assess what is the backtracking level of $\trim{\mu}{s}$. 
This study will be carried out in Proposition \ref{lem-trim} below, where we  establish that views in $\trim{\mu}{s}$ are nothing but subsequences of views in $s$.  Moreover,  if $\mu=(n,p)$ and $p$ has backtracking level $n$ in $s$, then $p$ has backtracking level less than or equal to  $n-1$ in $\trim{\mu}{s}$. Thus $\trim{\mu}{s}$ is exactly the subsequence of $s$ we were looking for: a visible play where $p$ plays only moves with backtracking level strictly less than $n$. 
\begin{proposition}[Properties of Trimming]\label{lem-trim}
Suppose $s=s_{0}\, s_{1}\ldots\, s_{k}$ is any visible play, $p$ a player of backtracking level $n>0$ in $s$ and $\mu=(n,p)$. Then:
\begin{enumerate}
\item For any player $q$, the $q$-view of any initial segment $IM$ of $\trim{\mu}{s}$ is a subsequence of the $q$-view of the initial segment $IS$ of $s$ that ends with the last move of $IM$. 
\item $\trim{\mu}{s}$ is a visible play.
\item $p$ is in $\trim{\mu}{s}$ of backtracking level less than or equal to $n-1$.
\end{enumerate}
\end{proposition}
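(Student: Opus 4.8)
The plan is to prove the three claims in order, deriving each from the previous, with Part 3 being the real difficulty. Throughout, the crucial geometric input is Proposition \ref{prop-back3}: since the trimmed moves are exactly the interiors of the edges from $p$-moves of maximal backtracking level $n$, and such edges cannot be crossed, the trimmed intervals are pairwise nested or disjoint, so the view computation in $\trim{\mu}{s}$ follows the very same pointers as in $s$. A first consequence I would record is that the justifier of any surviving move survives as well (if it were trimmed, it would sit inside a maximal edge that its move's own edge would have to cross or be nested in, contradicting Proposition \ref{prop-back3}); hence $\trim{\mu}{s}$ is a justified sequence with its pointers inherited from $s$.

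For Part 1, I would argue by induction on the length of the initial segment $IM$ of $\trim{\mu}{s}$, writing $m=s_i$ for its last move, so that $IS=s_0\ldots s_i$. I distinguish whether the predecessor of $m$ in $\trim{\mu}{s}$ is $s_{i-1}$ or not. If some moves are deleted immediately before $m$, then directly from Definition \ref{def-trim} one of those moves lies inside a maximal edge whose right endpoint must coincide with $m$, forcing $m$ to be a $p$-move with $\blev(m)=n$ whose justifier is precisely its $\trim{\mu}{s}$-predecessor (using Lemma \ref{lemma-trim} and Proposition \ref{prop-back3}); otherwise the predecessor is $s_{i-1}$. In each case I split on the polarity of $m$ relative to $q$ and unfold the recursive definition of the $q$-view: when $m$ is a $q$-move I append it to the view of the shorter segment; when $m$ is a $\overline{q}$-move I follow its pointer to its (surviving) justifier $b$ and recurse on the part before $b$. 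Invoking the standard fact that the $q$-view of a segment truncated at a move lying in its $q$-view is a prefix of that view, the inductive hypothesis then gives that $\pview{IM}$ (resp. $\oview{IM}$) is a subsequence of the corresponding view of $IS$.

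Part 2 follows from this analysis. For alternation, two moves adjacent in $\trim{\mu}{s}$ are either adjacent in $s$, hence of opposite polarity since $s$ is a play, or separated by a trimmed block, in which case they are the endpoints of an edge $(s_j,s_i)$ and so of opposite polarity by the arena's alternation condition; thus $\trim{\mu}{s}$ is a play. For visibility, take a move $m$ with justifier $b$: if $m$ is a $p$-move with $\blev(m)=n$ then $b$ is the immediate predecessor of $m$ and trivially lies in the relevant view; otherwise $m$ has the same predecessor as in $s$, $b$ lies in the corresponding view of $s$ by visibility of $s$, and since $b$ is not trimmed the correspondence from Part 1 places $b$ in the view of the $\trim{\mu}{s}$-prefix before $m$.

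For Part 3 I would compute backtracking levels through views by Proposition \ref{prop-back2}: for a $p$-move $m$ of $\trim{\mu}{s}$, its level in $\trim{\mu}{s}$ is the length of the longest inactivation chain into $\efrom{m}$ in the $p$-view $w_t$ of the $\trim{\mu}{s}$-segment ending at $m$, and $w_t$ is a subsequence of the corresponding $p$-view $w_s$ of $s$ by Part 1 (all chain edges being $p$-edges by Proposition \ref{prop-back1}). If $\blev(m)=n$ in $s$, then the interior of $\efrom{m}$ is removed, $b$ becomes adjacent to $m$, and the level of $m$ in $\trim{\mu}{s}$ is $0\le n-1$. If $\blev(m)<n$ in $s$, I must show the level does not rise, i.e. that every inactivation chain into $\efrom{m}$ in $w_t$ is already one in $w_s$. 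Crossing is preserved by passing to the subsequence $w_t$; the only danger is that a trimmed $p$-edge crossing a chain edge was keeping it inactive in $w_s$, so that deleting it spuriously activates that chain edge in $w_t$. \textbf{This activity control is the main obstacle.} The intended resolution is again Proposition \ref{prop-back3}: a trimmed move lies inside the uncrossable maximal edge $E$ from a level-$n$ $p$-move, and one checks that any trimmed edge actually crossing a surviving chain edge must itself be crossed by $E$, hence is inactive wherever it could affect the chain. Consequently trimming cannot lengthen the chains into $\efrom{m}$, so the level of $m$ in $\trim{\mu}{s}$ is at most its level in $s$, which is $\le n-1$; taking the maximum over all $p$-moves shows that $p$ has backtracking level at most $n-1$ in $\trim{\mu}{s}$.
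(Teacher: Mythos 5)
Your overall architecture coincides with the paper's: Parts 1 and 2 rest on the canonical view shape (Proposition \ref{prop-view} together with Lemma \ref{lemma-trim}) plus the observation that justifiers of surviving moves survive, and Part 3 rests on transferring inactivation chains from the trimmed play back to $s$, with Proposition \ref{prop-back3} supplying the activity control. You have also correctly isolated the crux — that a deleted edge could spuriously re-activate a surviving chain edge — and the correct remedy: a trimmed edge $d$ crossing a surviving edge is itself crossed by the uncrossable maximal edge $E$ whose interior contains $d$'s move, hence $d$ is inactive. This is exactly the mechanism of the paper's proof, and Parts 1 and 2 of your proposal are sound.

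The execution of Part 3, however, has a concrete gap: you run the activity argument inside the views $w_t \subseteq w_s$, but $E$ is an edge of $s$, and it is not immediate that its right endpoint (the level-$n$ $p$-move) occurs in $w_s$ just because interior moves of $E$ do; it in fact does, but only by a separate argument (the backward view-walk can enter the interval of $E$ only through its right endpoint, since a pointer jumping from beyond $E$ into its interior would cross $E$, contradicting Proposition \ref{prop-back3}), which you neither prove nor can cite. The paper sidesteps this entirely by using Proposition \ref{prop-back2} to move chains between views and full plays, and doing all activity bookkeeping in $s$ and $\trim{\mu}{s}$ themselves. Moreover, your phrase ``inactive wherever it could affect the chain'' compresses the two steps where the paper's real work lies: (i) since activity is defined by backward recursion over the entire prefix, the invariance of the status of \emph{surviving} edges between corresponding prefixes of $\trim{\mu}{s}$ and $s$ must itself be established by backward induction on the position of the edge — this is the paper's biconditional that $e$ is active in an initial segment $r'$ of $\trim{\mu}{s}$ iff active in the corresponding segment $r$ of $s$; and (ii) $d$ is inactive in a given prefix only if that prefix already contains the endpoint move of $E$ (a short prefix ending just after $d$'s move leaves $d$ active), so one must verify, as the paper does with its check that $i'\leq l$ excludes $l<i''$ and with the survival of the move $\bullet''$ preceding $\circ'$, that every prefix relevant to a chain does contain it. Both points are fixable with the tools you name, but as written they are the missing heart of Part 3.
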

\begin{proof}\mbox{}
\begin{enumerate}
\item By induction on $s$. Let $IM$ be any initial segment of $\trim{\mu}{s}$ and let  $s_{i}=\circ$ its last move. Let us fix also a player $q$. We have two cases:
\begin{itemize}
\item
$\lambda(\circ)=q$. We can write $s$ as
$$s=r \circ_{z}\ldots\bullet_{z}  \ldots\circ_{2}\ldots\bullet_{2}\circ_{1}\ldots\bullet_{1}\, \circ\, \ldots s_{k} $$
  where, for $1\leq i \leq z$, $\bullet_{i+1}$ immediately precedes $\circ_{i}$ and there is an edge $(\circ_{i}, \bullet_{i})$ in $s$ and $r\in I$ or $r$ is the empty sequence. By Lemma \ref{lemma-trim}, point 3,
$$\trim{\mu}{s}=r\circ_{i_m}\ldots\bullet_{i_m}  \ldots\circ_{i_{2}}\ldots\bullet_{i_2}\circ_{i_1}\ldots\bullet_{i_{1}}\circ\ldots s_{k}$$
for some sequence of numbers $1\leq i_{1}< i_{2}\ldots < i_{m}\leq z$. Therefore
$$IM=r\,\circ_{i_m}\ldots\bullet_{i_m}  \ldots\circ_{i_{2}}\ldots\bullet_{i_2}\circ_{i_1}\ldots\bullet_{i_{1}}\circ$$
$$IS=r\, \circ_{z}\ldots\bullet_{z}  \ldots\circ_{2}\ldots\bullet_{2}\circ_{1}\ldots\bullet_{1}\, \circ$$
Now, assuming that $r'$ is the $q$-view of $r$, the $q$-view of $IM$ is 
$$r'\,\circ_{i_{m}}\,\bullet_{i_{m}}  \ldots\circ_{i_2}\,\bullet_{i_2}\circ_{i_1}\,\bullet_{i_1}\, \circ$$
while the $q$-view of $IS$ is 
$$r'\,\circ_{z}\,\bullet_{z}  \ldots\circ_{2}\,\bullet_{2}\circ_{1}\,\bullet_{1}\, \circ$$ 
 which is what we wanted to show.
  
\item  $\lambda(\circ)\neq q$. Letting $\circ_{1}=\circ$, we can write $s$ as
$$s=r \bullet_{z}\ldots\circ_{z}  \ldots\bullet_{2}\ldots\circ_{2}\bullet_{1}\ldots\circ_{1}\ldots s_{k} $$
  where, for $1\leq i \leq z$, $\circ_{i+1}$ immediately precedes $\bullet_{i}$ and there is an edge $(\bullet_{i}, \circ_{i})$ in $s$ and $r\in I$ or $r$ is the empty sequence. As $\circ_{1}$ is in $\trim{\mu}{s}$, by Lemma \ref{lemma-trim}, point 3, we have
$$\trim{\mu}{s}=r\bullet_{i_m}\ldots\circ_{i_m}  \ldots\bullet_{i_{1}}\ldots\circ_{i_1}\bullet_{1}\ldots\circ_{{1}}\ldots s_{k}$$
for some sequence of numbers $1< i_{1}< i_{2}\ldots < i_{m}\leq z$. Therefore
$$IM=r\,\bullet_{i_m}\ldots\circ_{i_m}  \ldots\bullet_{i_{1}}\ldots\circ_{i_1}\bullet_{1}\ldots\circ_{{1}}$$ 
$$IS=r\, \bullet_{z}\ldots\circ_{z}  \ldots\bullet_{2}\ldots\circ_{2}\bullet_{1}\ldots\circ_{1}$$
Now,  assuming that $r'$ is the $q$-view of $r$, the $q$-view of $IM$ is 
$$r'\,\bullet_{i_{m}}\,\circ_{i_{m}}  \ldots\bullet_{i_1}\,\circ_{i_1}\bullet_{1}\,\circ_{1}$$
while the $q$-view of $IS$ is 
$$r'\,\bullet_{z}\,\circ_{z}  \ldots\bullet_{2}\,\circ_{2}\bullet_{1}\,\circ_{1}$$ 
 which is what we wanted to show.
\end{itemize}

\vspace{1ex}
\item
First of all, $\trim{\mu}{s}$ is a justified sequence: if in $s$ there is an edge $(s_{j}, s_{i})$ and $s_{i}$ is in $\trim{\mu}{s}$, then $s_{j}$ must be in $\trim{\mu}{s}$ as well, otherwise there would be an index $l$ such that $j<l<i$, $\lambda(s_{l})=p$, $\blev(s_{l})=n$ in $s$ and $\efrom{s_{l}}\cross (s_{j}, s_{i})$; by Proposition \ref{prop-back3}, that is impossible.\\
To show that $\trim{\mu}{s}$ is visible, assume that $\circ$ is the last move of an initial segment $IM$ of $\trim{\mu}{s}$, with $\lambda(\circ)=q$, and that $IS$ is the initial segment of $s$ ending with the last move of $IM$. We can write $s$ as
$$s=r \circ_{z}\ldots\bullet_{z}  \ldots\circ_{2}\ldots\bullet_{2}\circ_{1}\ldots\bullet_{1}\, \circ\, \ldots s_{k} $$
  where, for $1\leq i \leq z$, $\bullet_{i+1}$ immediately precedes $\circ_{i}$ and there is an edge $(\circ_{i}, \bullet_{i})$ in $s$ and $r\in I$ or $r$ is the empty sequence.
  By Lemma \ref{lemma-trim}, point 3.,
$$\trim{\mu}{s}=r\circ_{i_m}\ldots\bullet_{i_m}  \ldots\circ_{i_{2}}\ldots\bullet_{i_2}\circ_{i_1}\ldots\bullet_{i_{1}}\circ\ldots s_{k}$$
for some sequence of numbers $1\leq i_{1}< i_{2}\ldots < i_{m}\leq z$. Therefore
$$IM=r\,\circ_{i_m}\ldots\bullet_{i_m}  \ldots\circ_{i_{2}}\ldots\bullet_{i_2}\circ_{i_1}\ldots\bullet_{i_{1}}\circ$$
$$IS=r\, \circ_{z}\ldots\bullet_{z}  \ldots\circ_{2}\ldots\bullet_{2}\circ_{1}\ldots\bullet_{1}\, \circ$$
Now,  assuming that $r'$ is the $q$-view of $r$, by point 1., the $q$-view of $IM$ is 
$$r'\,\circ_{i_{m}}\,\bullet_{i_{m}}  \ldots\circ_{i_2}\,\bullet_{i_2}\circ_{i_1}\,\bullet_{i_1}\, \circ$$
while the $q$-view of $IS$ is 
$$r'\,\circ_{z}\,\bullet_{z}  \ldots\circ_{2}\,\bullet_{2}\circ_{1}\,\bullet_{1}\, \circ$$ 
We have to show that $\circ$ points in $IM$ to $r$ or some $\bullet_{i_{n}}$, with $1\leq n\leq m$. Since $s$ is visible, $\circ$ points in $IS$ to $r$ or some $\bullet_{l}$, with $1\leq l\leq z$. But since $IM$ is justified, $\bullet_{l}$ is in $IM$, and thus $l=i_{n}$,  with $1\leq n\leq z$. \\

\item We first show that for every edge $e=(s_{j}, s_{i})$ of $\trim{\mu}{s}$, %$e$  is active in an initial segment $r$ of $s$ if and only if $e$ is active in the subsequence $r'$ of $r$ made by the move in   $\trim{\mu}{s}$, \emph{provided} $r$ and $r'$ have the same last move.
$e$  is active in an initial segment $r'$ of $\trim{\mu}{s}$ if and only if $e$ is active in the initial segment $r$ of $s$ ending with the last move of $r'$. We proceed by induction on $k-i$.

 If  $i=k$, then $e=(s_{j}, s_{k})$, $r'=\trim{\mu}{s}$ and $r=s$, so $e$ must be active both in $r'$ and $r$, for it is the last edge of both plays. 
 
 If  $i<k$, we prove separately the two directions of the implication.
 \begin{itemize}
 \item
 For one direction, suppose that $e$ is active in $r'$: we want to show that $e$ is active also in $r$, and thus that if $e'=(s_{j'}, s_{i'})$ is an edge of $r$ such that $e\cross e'$, then $e'$ is inactive in $r$. Now, if $e'$ is in $r'$, then $e'$ is inactive in $r'$, and by induction hypothesis $e'$ is inactive also in $r$. So suppose $e'$ is not in $r'$. Then, there is a $p$-edge $e''=(s_{j''}, s_{i''})$ in $s$ such that $\blev(s_{i''})=n$ in $s$ and $j''<i'<i''$. A picture may be useful to summarize how  $s$ looks like:
 \vspace{-1ex}
 $$\begin{tikzpicture}
\node (a) at (0,0) {$\ldots s_{j}$};
\node (z1) at (1,0) {$\ldots$};
\node (b) at (2,0) {$s_{j'}$};
\node at (2, 0.9)  {$e$};
\node (z2) at (3,0) {$\ldots$};
\node (c) at (4,0) {$s_{i}$};
\node at (4, 0.95)  {$e'$};
\node (z3) at (5,0) {$\ldots$};
\node (d) at (6,0) {$s_{i'}$};
\node (z4) at (7,0) {$\ldots$};
\node (e) at (8,0) {$\efrom{s_{i''}}\ldots$};
\draw[-latex,bend right]  (d) edge (b);
\draw[-latex,bend right]  (c) edge (a);
\end{tikzpicture}$$
 We are first going  to determine where $s_{i''}$ points to, that is, where $s_{j''}$ lies. Since $s_{i}$ is in $r'$, it holds $i\leq j''$ and we already know that $j''<i'$; therefore    $e'\cross e''$.
 %Since $e\cross e'$, we have $j'<i$ and since $s_{i}$ is in $r'$, $i\leq j''$; by transitivity, $j'<j''$. Therefore, $j'<j''<i'$, which means   $e'\cross e''$.
 Hence $s$ is of the shape
 \vspace{-2ex}
  $$\begin{tikzpicture}
\node (a) at (0,0) {$\ldots s_{j}$};
\node (z1) at (1,0) {$\ldots$};
\node (b) at (2,0) {$s_{j'}$};
\node (z2) at (3,0) {$\ldots$};
\node (c) at (4,0) {$s_{i}$};
\node at (2, 0.9)  {$e$};
\node (z3) at (5,0) {$\ldots$};
\node at (5, 1.25)  {$e'$};
\node (f) at (6,0) {$ s_{j''}$};
\node (z4) at (7,0) {$\ldots$};
\node (d) at (8,0) {$s_{i'}$};
\node at (8, 0.95)  {$e''$};
\node (z5) at (9,0) {$\ldots$};
\node (e) at (10,0) {$s_{i''}\ldots$};
\draw[-latex,bend right]  (d) edge (b);
\draw[-latex,bend right]  (c) edge (a);
\draw[-latex,bend right]  (e) edge (f);
\end{tikzpicture}$$

  Finally, let $s_{l}$ be the last move of $r$. By hypothesis, $s_{l}$ is also in $r'$. Moreover, since $i'\leq l$, it is not the case that $l < i''$: otherwise, $s_{l}$ would not be in $r'$.  We conclude that  $e''$ is in $r$ and, since by Proposition \ref{prop-back3} $e''$ is active in $r$, by definition $e'$ is inactive in $r$. 
 
 \item For the other direction, suppose that $e$ is active in ${r}$: we want to show that $e$ is active also in $r'$, and thus that if $e'$ is an edge of $r'$ such that $e\cross e'$, $e'$ is inactive in $r'$. Indeed, since $e'$ is in $r'$, it is as well in $r$; hence, $e'$ is inactive in $r$. By induction hypothesis, $e'$ is inactive in $r'$ as well. 

\end{itemize}

Suppose now that $e\inact e'$ in $\trim{\mu}{s}$; we want to show that $e\inact e'$ in $s$ as well. Indeed, let us consider the shortest initial segment $r'$ of $\trim{\mu}{s}$ that contains $e$ and $e'$; let $r$ be the initial segment of $s$ ending with the last move of $r'$. By Proposition \ref{prop-back1}, $e=(\bullet, \circ)$ and $e'=(\bullet', \circ')$, with $\lambda(\circ)=\lambda(\circ')$. Then for some move $\bullet''$, we have
\vspace{-1ex}
$$r=\ldots \bullet\ldots\bullet'\ldots \circ\ldots \bullet''\circ'$$
%Since $\circ'$ by hypothesis occurs in $\trim{\mu}{s}$, no edge of backtracking level $n$ in $s$ crosses $e'$; this means that 
and we claim that $r'$ must be of the shape
\vspace{-1ex}
$$r'=\cdots \bullet\cdots\bullet'\cdots \circ\cdots \bullet''\circ'$$
possibly with fewer moves than $r$: we have just to check that $\bullet''$ must be in $r'$. Indeed, we have $\lambda(\circ')\neq p$ or $\blev(\circ')\neq n$ in $s$, otherwise $\circ$ would not be in $r'$; moreover, no move in $s$ different from $\circ'$ can point to the left of $\bullet''$ without pointing to the left of $\circ'$, which implies that if $\bullet''$ was not in $r'$, also $\circ'$ would not in $r'$, a contradiction. 
Since $e\inact e'$ in $\trim{\mu}{s}$, $e$ is active in the sequence obtained from $r'$ by removing the last move: $\cdots \bullet\cdots\bullet'\cdots \circ\cdots \bullet''$; therefore, by what we have just proved, $e$ is active in the sequence obtained from $r$ by removing the last move: $\ldots \bullet\ldots\bullet'\ldots \circ\ldots \bullet''$. We conclude that $e\inact e'$ in $r$ and thus in $s$.

We are finally able to prove the thesis: suppose $e$ is a $p$-edge from a move of backtracking level $n-1$ in $\trim{\mu}{s}$; we must show that for no edge $e'$, $e\inact e'$ in $\trim{\mu}{s}$. Indeed, suppose $e\inact e'$ in $\trim{\mu}{s}$. Then, there is in $\trim{\mu}{s}$ an inactivation chain $c=e_{1}\inact \ldots \inact e_{n-2}\inact e\inact e'$. We have proved that $c$ must be an inactivation chain also in $s$. Therefore, by Proposition \ref{prop-back1}, $e'$ is a $p$-edge from a move of backtracking level $n$ in $s$; since $e\cross e'$, $e$ should not be in $\trim{\mu}{s}$, contradicting our hypothesis.
\end{enumerate}
\end{proof}

\vspace{-2ex}
\section{Upper Bounds on the Length of Interactions}\label{sec-bounds}
In this Section, we establish our main results about the length of visible plays and interactions between bounded strategies. 

We first introduce another numerical measure $\com{s}{i}$ of how complex is a move $s_{i}$ in a play $s$, which is very different from the backtracking level of $s_{i}$: it represents how advanced the move is in the current view. Given this association between moves of a play and numbers, we can map each visible play $s$ into a sequence of numbers, which represents its complexity. The complexity $\comu{\mu}{s}$ of $s^{\mu}$ is instead defined as the restriction of the complexity of $s$ to $s^{\mu}$: in other terms, $\comu{\mu}{s}$ is the sequence of the complexities that the moves of $\trim{\mu}{s}$ have \emph{in} $s$. The lexicographic ordering $\prec$ of finite sequences of numbers is then used to order the complexities of plays.

\begin{definition}[Complexity of Moves and Plays, Lexicographic Ordering $\prec$]\label{def-ord}Let $s=s_{0}\, s_{1} \ldots s_{k}$ be a visible play.
\begin{itemize}
%\item Let $s=s_{0}\, s_{1} \ldots s_{k}$ be a visible play. Suppose $1\leq i\leq k$. We define the complexity of $s_{i}$ in $s$, denoted with $\comp{s}(s_{i})$, as the length of $\pview{s_{0}\, s_{1}\ldots s_{i}}$ if $\lambda(s_{i})=P$, as the length of $\oview{s_{0}\, s_{1}\ldots s_{i}}$ if $\lambda(s_{i})=O$.

\item Assume $0\leq i\leq k$. We define the complexity of $s_{i}$ in $s$, denoted with $\com{s}{i}$, as the length of $\pview{s_{0}\, s_{1}\ldots s_{i}}$ if $\lambda(s_{i})=P$, as the length of $\oview{s_{0}\, s_{1}\ldots s_{i}}$ if $\lambda(s_{i})=O$.
\item Let $n\in\mathbb{N}$,  $p$ be a player and $\mu=(n, p)$. Suppose $\trim{\mu}{s}=s_{i_{0}}\, s_{i_{1}}\ldots s_{i_{m}}$. We define 
$$\comu{\mu}{s} = \com{s}{i_{0}}\, \com{s}{i_{1}}\ldots \com{s}{i_{m}}$$
\item
Let $s=a_{0}\, a_{1} \ldots a_{n}$ and $r=b_{0}\, b_{1} \ldots b_{m}$ be two sequences of natural numbers.  We define that 
$s \prec r$  if and only if either
$$n<m\ \land\ \forall i.\ 0\leq i\leq n \implies a_{i}=b_{i}$$ 
or 
$$\exists k.\ 0\leq k\leq n\, \land \, (\forall i.\ 0\leq i < k\,\implies a_{i}=b_{i})\ \land\  a_{k}< b_{k}$$
 \end{itemize}
\end{definition}
The next Proposition \ref{prop-ord0} is simple, but is really crucial: it says that if a player answers an adversary move, this answer is more complex than the player's first reaction to that move and than any of his other previous non-retracted answers to that move.  Intuitively, changing one's mind  and backtracking costs, and this cost prevents the backtracking  during a play between bounded strategies to be unlimited. This phenomenon was also noticed and exploited by Coquand (\cite{Coquand}, pp. 332).
\begin{proposition}\label{prop-ord0}
Let  $s=s_{0}\, s_{1}\ldots s_{k}$ be a visible play and $(s_{j}, s_{i})$ is a $p$-edge of $s$, with $i\neq j+1$. Then: \begin{enumerate}
 \item $\com{s}{j+1}< \com{s}{i}$.  
 \item Suppose that $j<l<i$, that $(s_{j}, s_{l})$ is an edge of $s$ and for no egde $e$ of $s$, $(s_{j}, s_{l})\cross e$. Then $\com{s}{l}< \com{s}{i}$.
\end{enumerate}
%Suppose $s=r\, s_{j}\, s_{j+1}\, r'\, s_{i} $ is a visible play and that $e=(s_{j}, s_{i})$ is a $p$-edge of $s$. Then $\com{s}{j+1}< \com{s}{i}$. 
%and $(r\, \bullet\, \circ'\, r') \prec (r\, \bullet\, \circ)$.
\end{proposition}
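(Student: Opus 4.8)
The plan is to reduce both claims to a single computation of the view using the zigzag shape guaranteed by Proposition \ref{prop-view}. Write $p=\lambda(s_{i})$, so that $\com{s}{i}$ is the length of the $p$-view $\pview{s_{0}\ldots s_{i}}$. Since $(s_{j},s_{i})$ is a $p$-edge, the alternation condition forces $\lambda(s_{j})=\overline{p}$, and I would apply Proposition \ref{prop-view} to the initial segment $s_{0}\ldots s_{i}=r\,s_{j}\,r'\,s_{i}$ (itself visible) to obtain the decomposition $s_{0}\ldots s_{i}=r\,s_{j}\,\circ_{k}\ldots\bullet_{k}\ldots\circ_{1}\ldots\bullet_{1}\,s_{i}$, where each $\bullet_{m+1}$ immediately precedes $\circ_{m}$, each $(\circ_{m},\bullet_{m})$ is an edge, $\circ_{k}=s_{j+1}$ and $\bullet_{1}=s_{i-1}$. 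Unfolding the view definition along this zigzag gives $\pview{s_{0}\ldots s_{i}}=\pview{s_{0}\ldots s_{j}}\,\circ_{k}\,\bullet_{k}\ldots\circ_{1}\,\bullet_{1}\,s_{i}$, so that, writing $L=|\pview{s_{0}\ldots s_{j}}|$, one has $\com{s}{i}=L+2k+1$. Because $i\neq j+1$, the zigzag is nonempty, i.e. $k\geq 1$.

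For the first claim, the move $s_{j+1}$ is a $p$-move by alternation, hence $\pview{s_{0}\ldots s_{j+1}}=\pview{s_{0}\ldots s_{j}}\,s_{j+1}$ directly from the first clause of the view definition, giving $\com{s}{j+1}=L+1$. Comparing with $\com{s}{i}=L+2k+1\geq L+3$ yields $\com{s}{j+1}<\com{s}{i}$. Note that the first claim needs nothing about where $s_{j+1}$ points; it is precisely the extreme case $\circ_{k}=s_{j+1}$ of the picture, and requires no non-crossing hypothesis.

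For the second claim the key step is to locate $s_{l}$ inside this same zigzag using the non-crossing hypothesis. The positions strictly between $j$ and $i$ are partitioned into the zigzag moves $\circ_{m},\bullet_{m}$ and the open ``gaps'' $(\circ_{m},\bullet_{m})$, there being no gap between the adjacent $\bullet_{m+1}$ and $\circ_{m}$. If $s_{l}$ lay in a gap, i.e. $\circ_{m}<l<\bullet_{m}$ for some $m$, then since $j<\circ_{m}$ the edge $(\circ_{m},\bullet_{m})$ would cross $(s_{j},s_{l})$, contradicting the assumption that $(s_{j},s_{l})$ is crossed by no edge. Hence $s_{l}$ is a zigzag move; since $s_{l}$ is a $p$-move and every $\bullet_{m}$ is a $\overline{p}$-move, I conclude $s_{l}=\circ_{m}$ for some $1\leq m\leq k$. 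Unfolding the view from $\circ_{m}$ along the same zigzag then gives $\pview{s_{0}\ldots s_{l}}=\pview{s_{0}\ldots s_{j}}\,\circ_{k}\,\bullet_{k}\ldots\circ_{m+1}\,\bullet_{m+1}\,\circ_{m}$, a prefix of $\pview{s_{0}\ldots s_{i}}$ of length $L+2(k-m)+1$; thus $\com{s}{l}=L+2(k-m)+1<L+2k+1=\com{s}{i}$, using $m\geq 1$.

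The main obstacle is the sub-claim $s_{l}=\circ_{m}$: it is the only point where the non-crossing hypothesis is genuinely used, and it is what lets me identify $\pview{s_{0}\ldots s_{l}}$ with a prefix of $\pview{s_{0}\ldots s_{i}}$. Everything else is a mechanical unfolding of the view definition along the zigzag of Proposition \ref{prop-view}. The one detail demanding care is verifying that the view walk started from $\circ_{m}$ traverses exactly the tail $\circ_{k}\,\bullet_{k}\ldots\circ_{m}$ of the walk started from $s_{i}$; this holds because all the moves and justification pointers involved in that tail lie to the left of $s_{l}$ and are therefore retained in the initial segment $s_{0}\ldots s_{l}$.
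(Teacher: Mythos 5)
Your proposal is correct and takes essentially the same route as the paper's proof: both apply Proposition \ref{prop-view} to the visible initial segment $s_{0}\ldots s_{i}$, use the non-crossing hypothesis (together with polarity) to force $s_{l}$ to be one of the zigzag moves $\circ_{m}$, and then read off $\pview{s_{0}\ldots s_{l}}$ as a proper prefix of $\pview{s_{0}\ldots s_{i}}$. The only cosmetic difference is that the paper folds claim 1 into the same computation as the extreme case $s_{j+1}=\circ_{z}$, whereas you verify it directly before observing it is that extreme case; your explicit gap-versus-crossing argument merely spells out what the paper states tersely.
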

\begin{proof}
%Moves by $\overline{p}$ will be represented as black circles, while those by ${p}$ as white circles.   
By Proposition \ref{prop-view}
$$s_{0}\ldots s_{i}\ =\ s_{0}\ldots s_{j} \circ_{z}\ldots\bullet_{z}  \ldots\circ_{2}\ldots\bullet_{2}\circ_{1}\ldots\bullet_{1}\, s_{i} $$
  where, for $1\leq n \leq z$, $\bullet_{n+1}$ immediately precedes $\circ_{n}$ and there is an edge $(\circ_{n}, \bullet_{n})$ in $s$.  Since $(s_{j}, s_{l})$ is crossed by no edge of $s$, it must be the case that $s_{l}=\circ_{h}$, with $1\leq h \leq z$. Thus to prove 1. and 2. simultaneously, it is enough to assume $s_{n}=\circ_{h}$, with $1\leq h \leq z$, and show $\com{s}{n}< \com{s}{i}$.
Indeed, let $v$ be the $p$-view of $s_{0}\ldots s_{j}$; $s_{i}$ and $\circ_{h}$ are $p$ moves,  so the $p$-view of $s_{0}\ldots s_{i}$ is $$v\, \circ_{z}\,\bullet_{z}  \ldots \circ_{h+1}\,\bullet_{h+1}\circ_{h}\,\bullet_{h}\ldots \circ_{2}\,\bullet_{2}\circ_{1}\,\bullet_{1}\, s_{i}$$ 
and the $p$-view of $s_{0}\ldots \circ_{h}$ is 
$$v\,  \circ_{z}\,\bullet_{z}  \ldots \circ_{h+1}\,\bullet_{h+1}\circ_{h}$$ As $s_{n}=\circ_{h}$,
we get  $\com{s}{n}< \com{s}{i}$. %and $(r\,\bullet\, \circ'\, r') \prec (r\, \bullet\, \circ)$.
\end{proof}
We now prove that any visible play $s_{0}\, s_{1}\ldots s_{k}$ in which some player $p$ has backtracking level $n$ can be seen as a sequence of visible plays in which $p$ has backtracking level strictly less than $n$. The plays of the sequence are obtained just by applying the Trimming operation, with $\mu=(n,p)$, from left to right to all initial segments of $s$. The plays are each less complex than the next one, according to the relation $\prec$. %More precisely, if we define $r_{i}= \trim{\mu}{(s_{0}\, s_{1}\ldots s_{i})}$, then $r_{0}\prec r_{1}\prec \ldots \prec r_{k}$. 
More precisely, define $r_{i}= s_{0}\, s_{1}\ldots s_{i}$. If we imagine to watch the sequence $\trim{\mu}{(r_{0})}, \trim{\mu}{(r_{1})}, \ldots, \trim{\mu}{(r_{k})}$ as a succession of photograms, we would see that for some time the $\trim{\mu}{(r_{i})}$ are extended by one element, then they are reduced by removing some final segment, then they are extended again, and so forth. Something like a spring, which is stretched for some time, then suddenly is compressed only to be stretched again and so on: a sort of $1$-backtracking game \cite{BerCoqHay}, as we observed in Section \S\ref{sec-proofidea}. Moreover, this series of extensions and compressions come with a cost: $\comu{\mu}{r_{0}}\prec \comu{\mu}{r_{1}}\prec \ldots \prec \comu{\mu}{r_{k}}$. 
%Hence, we call Proposition \ref{prop-ord} below Spring Property. Theorem \ref{thm-abstB} will show the spring cannot stretch itself infinitely far.
\begin{proposition}[Spring Property]\label{prop-ord}
Let $s=s_{0}\, s_{1}\ldots s_{k}$ be a visible play, $p$ a player of backtracking level $n>0$ in $s$ and $\mu=(n,p)$. Then, whenever $1\leq i< k$, it holds
$$\comu{\mu}{s_{0}\, s_{1}\ldots s_{i}}\prec \comu{\mu}{s_{0}\, s_{1}\ldots s_{i+1}}$$
\end{proposition}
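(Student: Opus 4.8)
The plan is to fix $i$ with $1\le i<k$, write $r_i=s_{0}\ldots s_{i}$ and $r_{i+1}=s_{0}\ldots s_{i+1}$, and compare $\comu{\mu}{r_i}$ with $\comu{\mu}{r_{i+1}}$. A preliminary observation makes the bookkeeping painless: by Proposition \ref{prop-back2} the inactivation chains ending in $\efrom{s_l}$ depend only on the view of $s_{0}\ldots s_{l}$, so $\blev(s_l)$ is the same whether computed in $r_i$, in $r_{i+1}$ or in $s$; likewise $\com{s}{l}$ depends only on $s_{0}\ldots s_{l}$. Hence the trimming criterion and the complexities of all moves of index $\le i$ are insensitive to which of these plays we work in, and I may speak of a single ambient value $\com{s}{l}$ and a single level assignment. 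In particular, applied to an initial segment the Trimming operation either removes nothing (when no level-$n$ $p$-move has yet appeared) or is governed by Lemma \ref{lemma-trim} with respect to the ambient level $n$, so the identities of Lemma \ref{lemma-trim} are available for the segments I use. I then split on the last move $s_{i+1}$.

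If $\blev(s_{i+1})\ne n$ or $\lambda(s_{i+1})\ne p$, then Lemma \ref{lemma-trim}(1) gives $\trim{\mu}{r_{i+1}}=\trim{\mu}{r_i}\,s_{i+1}$, so $\comu{\mu}{r_{i+1}}=\comu{\mu}{r_i}\,\com{s}{i+1}$ is a one-element extension of $\comu{\mu}{r_i}$; by the first clause of $\prec$ we are done.

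The interesting case is $\blev(s_{i+1})=n$ and $\lambda(s_{i+1})=p$, say $\efrom{s_{i+1}}=(s_j,s_{i+1})$ with $j<i$. Here Lemma \ref{lemma-trim}(2) yields $\trim{\mu}{r_{i+1}}=\trim{\mu}{(s_{0}\ldots s_j)}\,s_{i+1}$, hence $\comu{\mu}{r_{i+1}}=\comu{\mu}{(s_{0}\ldots s_j)}\,\com{s}{i+1}$. The first key step is to show that $\trim{\mu}{(s_{0}\ldots s_j)}$ is a prefix of $\trim{\mu}{r_i}$. Indeed, a move $s_l$ with $l\le j$ that is trimmed in $r_i$ but not in $s_{0}\ldots s_j$ would sit in the interior of a level-$n$ $p$-edge $(s_{c},s_{d})$ with $c<j<d\le i$; but then $c<j<d<i+1$, so $\efrom{s_{d}}$ is crossed by $\efrom{s_{i+1}}$, contradicting the maximal-edge Proposition \ref{prop-back3}. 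The same observation applied with $l=j$ shows that $s_j$ survives. Thus $\comu{\mu}{r_i}$ and $\comu{\mu}{r_{i+1}}$ share the common prefix $\comu{\mu}{(s_{0}\ldots s_j)}$, and since the last move $s_i$ of $r_i$ always survives and $i>j$, the sequence $\comu{\mu}{r_i}$ strictly extends this prefix. Writing $s_{l^{*}}$ for the first surviving move after $s_j$ in $\trim{\mu}{r_i}$, it remains only to prove $\com{s}{l^{*}}<\com{s}{i+1}$; the second clause of $\prec$ (agreement on the prefix, strict decrease at position $l^{*}$) then gives $\comu{\mu}{r_i}\prec\comu{\mu}{r_{i+1}}$.

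The crux is the inequality $\com{s}{l^{*}}<\com{s}{i+1}$, which I read off from Proposition \ref{prop-ord0} applied to the $p$-edge $(s_j,s_{i+1})$ (legitimate since $i+1\ne j+1$). If $l^{*}=j+1$ this is exactly Proposition \ref{prop-ord0}(1). If instead $s_{j+1}$ was trimmed, then, arguing as above, the deletions immediately to the right of $s_j$ can only be produced by level-$n$ $p$-edges that point to $s_j$ (a deleting edge $(s_c,s_d)$ with $c<j$ would again satisfy $c<j<d<i+1$ and be crossed by $\efrom{s_{i+1}}$, impossible by Proposition \ref{prop-back3}); the interiors of these edges cover $s_{j+1}\ldots s_{l^{*}-1}$, so $s_{l^{*}}$ is the endpoint of the rightmost of them, whence $\efrom{s_{l^{*}}}=(s_j,s_{l^{*}})$ and, being maximal, this edge is crossed by nothing (Proposition \ref{prop-back3}). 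Proposition \ref{prop-ord0}(2) then yields $\com{s}{l^{*}}<\com{s}{i+1}$, completing the comparison. I expect the main difficulty to lie exactly here — pinning down $s_{l^{*}}$ and verifying that $(s_j,s_{l^{*}})$ is an uncrossed edge — since this is the only place where the combinatorics of crossings genuinely enters; the entire case analysis rests on Proposition \ref{prop-back3}, each unwanted configuration being dispatched by exhibiting a crossing of a maximal edge, which is forbidden.
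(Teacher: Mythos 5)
Your proposal is correct and takes essentially the same route as the paper's own proof: the same case split on whether $s_{i+1}$ is a level-$n$ $p$-move, the same use of Lemma \ref{lemma-trim} (points 1 and 2) to exhibit the common prefix $\trim{\mu}{(s_{0}\ldots s_{j})}$, and the same identification of the first surviving move after $s_{j}$ as either $s_{j+1}$ or the endpoint of an uncrossed level-$n$ $p$-edge $(s_{j}, s_{l^{*}})$, so that Propositions \ref{prop-back3} and \ref{prop-ord0} yield $\com{s}{l^{*}}<\com{s}{i+1}$ and the lexicographic comparison closes the argument. Your explicit preliminary remark that backtracking levels and complexities depend only on the initial segment up to the move is used tacitly in the paper, and your iterative treatment of the deleted stretch is slightly more detailed, but these are cosmetic differences.
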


\begin{proof}
There are two possibilities: either $\blev({s_{i+1}})=n$   and $\lambda(s_{i+1})=p$ or not. 

i) Suppose $\blev({s_{i+1}})=n$  and $\lambda(s_{i+1})=p$. Assume $\efrom{s_{i+1}}=(s_{j}, s_{i+1})$. Then $j\neq i$, since $n>0$, and 
by Lemma \ref{lemma-trim} (point 2. and 1.)
$$ \trim{\mu}{(s_{0}\, s_{1}\ldots s_{i+1})}=\trim{\mu}{(s_{0}\, s_{1}\ldots s_{j-1})}s_{j}\, s_{i+1}$$
Moreover, $\lambda(s_{i})=\lambda(s_{j})\neq p$ and for every $l$ such that $j<l\leq i$, $\lambda(s_{l})=p$ and $\blev(s_{l})= n$ in $s$, it is not the case that $\efrom{s_{l}}=(s_{h}, s_{l})$ with $h<j$, otherwise $\efrom{s_{l}}\cross \efrom{s_{i+1}}$ and by Proposition \ref{prop-back3} it is impossible. Therefore, since $i\neq j+1$, there is an $l$ such that $j<l<i$
 $$ \trim{\mu}{(s_{0}\, s_{1}\ldots s_{i})}=\trim{\mu}{(s_{0}\, s_{1}\ldots s_{j-1})}s_{j}\,s_{l}\ldots s_{i}$$
Now, either $l=j+1$ or $\lambda(s_{l})=p$, $\blev(s_{l})=n$ in $s$ and $(s_{j}, s_{l})$ is in $s$
%(assuming $l\neq j+1$, then $s_{l-1}$ is not in $ \trim{\mu}{(s_{0}\, s_{1}\ldots s_{i})}$, thus it must hold that for some $h$, with $l\leq h$, $\lambda(s_{h})=p$, $\blev(s_{h})=n$ and $(s_{j}, s_{h})$ is in $s$; moreover, $h=l$,  otherwise $s_{l}$ would not be in $ \trim{\mu}{(s_{0}\, s_{1}\ldots s_{i})}$);
 (because if we assume $l\neq j+1$, then $s_{j+1}$ is not in $ \trim{\mu}{(s_{0}\, s_{1}\ldots s_{i})}$, thus it must hold that for some $h$, with $j+1<h\leq l$, $\lambda(s_{h})=p$, $\blev(s_{h})=n$ and $(s_{j}, s_{h})$ is in $s$; moreover, $h=l$,  because $s_{h}$ must be  in $ \trim{\mu}{(s_{0}\, s_{1}\ldots s_{i})}$, since $\efrom{s_{h}}$ is crossed by no edge by Proposition \ref{prop-back3}); 
 in either case, by Propositions  \ref{prop-back3} and \ref{prop-ord0}, $\com{s}{l}<\com{s}{i+1}$, thus we get 
$$
\begin{aligned}
\comu{\mu}{s_{0}\, s_{1}\ldots s_{i}}=&\ \comu{\mu}{s_{0}\, s_{1}\ldots s_{j-1}}\, \com{s}{{j}}\, \com{s}{{l}}\ldots \com{s}{{i}}\\
\prec &\ \comu{\mu}{s_{0}\, s_{1}\ldots s_{j-1}}\, \com{s}{{j}}\, \com{s}{{i+1}}\\
= &\ \comu{\mu}{s_{0}\, s_{1}\ldots s_{i+1}}
\end{aligned}
$$

 %By Proposition \ref{prop-ord0} applied to  $\trim{\mu}{(s_{0}\, s_{1}\ldots s_{i})}\,s_{i+1}$ and $ \trim{\mu}{(s_{0}\, s_{1}\ldots s_{i+1})}$, we get $$\trim{\mu}{(s_{0}\, s_{1}\ldots s_{i})}\prec \trim{\mu}{(s_{0}\, s_{1}\ldots s_{i+1})}$$

ii) Suppose $\blev({s_{i+1}})\neq n$  or $\lambda(s_{i+1})\neq {p}$. Then, by Lemma \ref{lemma-trim} (point 1.)
$$ \trim{\mu}{(s_{0}\, s_{1}\ldots s_{i+1})}=\trim{\mu}{(s_{0}\, s_{1}\ldots s_{i})} s_{i+1}$$
 Therefore
 $$\comu{\mu}{s_{0}\, s_{1}\ldots s_{i}}\prec \comu{\mu}{s_{0}\, s_{1}\ldots s_{i}}\, \com{s}{i+1} = \comu{\mu}{s_{0}\, s_{1}\ldots s_{i+1}}$$

\end{proof}
Finally, we are able to prove our first main Theorem, which tells that the length of any visible play of real backtracking level $b$ is bounded by a tower of exponentials of height $b+1$. More precisely, define the \textbf{hyperexponential function} $a_{n}^{m}$, with $a,n,m\in \mathbb{N}$, by: $a_{0}^{m}=m$ and $a_{n+1}^{m}=a^{a_{n}^{m}}$; then we prove the following statement.
\begin{theorem}[Abstract Min-Backtracking-Theorem]\label{thm-abstB}
Suppose $s=s_{0}\, s_{1}\ldots s_{l}$ is a visible play of real backtracking level $b$, $p$ is the principal player of $s$, $k$ is the length of the longest $p$-view or $\overline{p}$-view  of an initial segment of $s$.  Then
$$l\ \leq \ \underbrace{k^{k^{.^{.^{k^{k}}}}}}_{b+1} \leq \ \underbrace{2^{2^{.^{.^{2^{k(\log k)\cdot 2}}}}}}_{b+1} =2_{b}^{k(\log k)2} $$ 
\end{theorem}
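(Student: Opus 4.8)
The plan is to argue by induction on the real backtracking level $b$, peeling off one exponential per level by means of the Trimming operation of Definition \ref{def-trim} with $\mu=(b,p)$ (where $p$ is the principal player, so that $p\blev(s)=b$) together with the Spring Property of Proposition \ref{prop-ord}. A preliminary observation that makes the induction go through cleanly is that backtracking levels are \emph{stable under prefixes}: by Definition \ref{def-inact}, whether $e\inact e'$ holds depends only on the initial segment ending with the later edge $e'$, so an entire inactivation chain terminating at $\efrom{s_j}$ is already determined by $s_{0}\ldots s_{j}$. Hence $\blev(s_j)$ is the same whether computed in $s$ or in any initial segment $s_0\ldots s_m$ with $m\geq j$; in particular $p\blev(s_0\ldots s_i)\leq p\blev(s)=b$ for every $i$, with equality exactly when $s_0\ldots s_i$ already contains a $p$-move of level $b$.

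For the base case $b=0$ the principal player $p$ never backtracks, i.e. every $p$-move answers the immediately preceding move. Inspecting the recursive clauses defining $\pview{}$ and $\oview{}$, the $\overline p$-view can only shrink when a $p$-move points strictly before the immediately preceding move; since this never happens, the $\overline p$-view of $s_0\ldots s_i$ grows by exactly one at each step, so the $\overline p$-view of $s$ has length $l+1$. As every view in $s$ has length at most $k$, we conclude $l\leq k-1<k=k_0^{k}$, which is the claim for $b=0$.

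For the inductive step assume $b>0$, put $r_i=s_0\ldots s_i$ and $\mu=(b,p)$. By Proposition \ref{prop-ord} we have $\comu{\mu}{r_0}\prec\comu{\mu}{r_1}\prec\cdots\prec\comu{\mu}{r_l}$, so these $l+1$ sequences are pairwise distinct, and I bound how many values they may take. Each entry of $\comu{\mu}{r_i}$ is a complexity $\com{s}{j}$, i.e. a view length, hence a number in $\{1,\dots,k\}$; and the length of $\comu{\mu}{r_i}$ is $|\trim{\mu}{r_i}|$. The crucial point is that $\trim{\mu}{r_i}$ is always a visible play of real backtracking level $\leq b-1$ with every view of length $\leq k$: if $r_i$ contains a $p$-move of level $b$, this is precisely Proposition \ref{lem-trim} (prefix-stability of levels guarantees that its hypotheses, stated for the play being trimmed, indeed hold for $r_i$); otherwise trimming deletes nothing, $\trim{\mu}{r_i}=r_i$ with $p\blev(r_i)<b$, so the real backtracking level is again $\leq b-1$, and visibility and the view-length bound are inherited from $s$. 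Applying the induction hypothesis to $\trim{\mu}{r_i}$ (and monotonicity of $k_{b'}^{k}$ in $b'$) bounds its length by $k_{b-1}^{k}$; write $L:=k_{b-1}^{k}$.

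It remains to count. The number of nonempty sequences over $\{1,\dots,k\}$ of length at most $L$ is $\sum_{j=1}^{L}k^{j}<k^{L+1}$, so the strictly increasing chain forces $l+1\leq k^{L+1}$, whence $l<k^{\,k_{b-1}^{k}+1}$, a tower of $k$'s of height $b+1$. A routine verification of the hyperexponential arithmetic — in which the harmless extra exponent $+1$ and the $\log k$ factors accumulated along the induction are absorbed by the slack deliberately built into the exponent $k(\log k)2$ — then yields the announced chain $l\leq k_{b}^{k}\leq 2_{b}^{k(\log k)2}$, completing the induction. I expect the main obstacle to be the inductive step's claim that \emph{every} trimmed prefix $\trim{\mu}{r_i}$ is a legitimate visible play of real backtracking level strictly below $b$: this is exactly where prefix-stability of backtracking levels is indispensable, both to transfer the hypotheses of Proposition \ref{lem-trim} from $s$ to its initial segments and to handle uniformly the case in which no level-$b$ $p$-move has yet appeared, so that the two controlling quantities $k$ and $b-1$ are kept in check simultaneously for all $i$ rather than only for $s$. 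The remaining delicate point is purely quantitative: holding the tower height at exactly $b+1$ while confirming that the off-by-one in the top exponent and the logarithmic factors stay within the margin furnished by $k(\log k)2$.
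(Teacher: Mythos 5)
Your proposal is correct and is essentially the paper's own proof: induction on $b$, with the base case handled via the step-by-step growth of $\overline{p}$-views when the principal player never backtracks, and the inductive step combining the Spring Property (Proposition \ref{prop-ord}) with Proposition \ref{lem-trim} applied to the trimmed prefixes $\trim{\mu}{(r_i)}$, followed by a count of the $\prec$-increasing sequences $\comu{\mu}{r_i}$ over $\{1,\dots,k\}$ and the same hyperexponential arithmetic using $b\leq k$. Your explicit prefix-stability observation and the case split for prefixes containing no level-$b$ $p$-move (where $\trim{\mu}{(r_i)}=r_i$) make precise a step the paper applies tacitly, and your marginally looser count $k^{L+1}$ (versus the paper's claimed $k^{n}$) is absorbed into the final $2_{b}^{k(\log k)2}$ bound exactly as the paper's own small arithmetic slack is.
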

\begin{proof} We prove the first inequality, by induction on $b$. 
\begin{itemize}
\item Case $b=0$. For every $p$ move $s_{j}$, with $j>0$,  $\blev(s_{j})=0$, so one has by definition $\efrom{s_{j}}=(s_{j-1}, s_{j})$. Therefore, for every $i$, the $\overline{p}$-view of $s_{0}\,s_{1}\ldots s_{i}$ is  $s_{0}\,s_{1}\ldots s_{i}$. Since the length of $\overline{p}$-views is bounded by $k$, we conclude $l=k$, which is what we wanted to show. \\

\item Case $b>0$. Let $\mu=(b,p)$. For every $i$, with $0\leq i\leq l$, let $r_{i}=s_{0}\,s_{1}\ldots s_{i}$. By Proposition \ref{prop-ord}
$$\comu{\mu}{r_{0}}\prec \comu{\mu}{r_{1}}\prec \comu{\mu}{r_{2}}\prec \ldots \prec \comu{\mu}{r_{l}}$$
By Proposition \ref{lem-trim}, for every $i$, $p$ has in $\trim{\mu}{(r_{i})}$ backtracking level  less than or equal to $b-1$ and no view of no initial segment of $\trim{\mu}{(r_{i})}$ is longer than $k$, thus by induction hypothesis  $\trim{\mu}{(r_{i})}$ is no longer than 
 $$ n=\underbrace{k^{k^{.^{.^{k^{k}}}}}}_{b}$$
Moreover,  for every $0\leq m\leq l$, $\com{s}{m}\leq k$. Therefore the set of sequences
$$\{\comu{\mu}{r_{i}}\ |\  0\leq i\leq l\}$$
has cardinality less than or equal to $k^{n}$. By Definition \ref{def-ord} of $\prec$, we conclude that $l\leq k^{n}$, which is the first inequality we wanted to show.\\
\end{itemize}
Now, let us prove by induction on $i\in\mathbb{N}$ that $k_i^{k}\leq 2_i^{(k+i)\log k}$. If $i=0$, then $k_i^{k}=k$ and $2_i^{(k+i)\log k}=k\log k$, and we are done. Moreover, by induction hypothesis and standard inequalities:
$$\begin{aligned}&k_{i+1}^{k}= k^{k_{i}^{k}}= 2^{k_{i}^{k}\log k}\leq\\
\leq &\, 2^{2_{i}^{(k+i)\log k}\log k}\leq\, 2^{2_{i}^{(k+i)\log k}\,2_i^{\log k}}\leq\, 2^{2_{i}^{(k+i)\log k+\log k}}=2^{2_{i}^{(k+i+1)\log k}}=2_{i+1}^{(k+i+1)\log k}
\end{aligned}
$$
Since by Proposition \ref{prop-back2}, $b\leq k$, we conclude 
$$l\leq\, k_b^k\leq\, 2_b^{(k+b)\log k}\leq\, 2_b^{(k+k)\log k} $$
\end{proof}
We now readily prove our second main Theorem:  bounded strategies have only finite interactions, whose length is bounded by a tower of exponentials of height equal to the minimum among the backtracking levels of the strategies plus one.  
\begin{theorem}[Min-Backtracking-Theorem]\label{thm-B}
Let $\sigma$ be a Player $n$-backtracking strategy bounded by $k$, $\tau$  an Opponent $m$-backtracking strategy bounded by $k$ and $b=\mathsf{min}(n,m)$. Assume $s\in\sigma\star\tau$ and $s=s_{0}\, s_{1}\ldots s_{l}$. Then 
$$l\ \leq \ \underbrace{k^{k^{.^{.^{k^{k}}}}}}_{b+1} \leq \underbrace{2^{2^{.^{.^{2^{k(\log k)\cdot 2}}}}}}_{b+1}=2_{b}^{k(\log k)2}$$
\end{theorem}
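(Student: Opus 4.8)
The plan is to obtain the statement as an essentially immediate corollary of the Max-Min Theorem \ref{thm-maxmin} and the Abstract Min-Backtracking-Theorem \ref{thm-abstB}; the only genuine work is to verify that boundedness of the two strategies by $k$ supplies exactly the view-length bound that \ref{thm-abstB} requires. First I would apply Theorem \ref{thm-maxmin} to $s\in\sigma\star\tau$. This gives at once that $s$ is a visible play and that its real backtracking level is some $b'\leq \mathsf{min}(n,m)=b$. Let $p$ be the principal player of $s$, so that the smaller of the two players' backtracking levels in $s$ is attained at $p$ and equals $b'$.

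Next I would establish the view bound: every Player-view $\pview{r}$ and every Opponent-view $\oview{r}$ of an initial segment $r$ of $s$ has length at most $k$. The mechanism is the parity/closure argument already used inside the proof of \ref{thm-maxmin}: by the alternation condition and the fact that initial moves are Opponent moves, every even-length prefix of $s$ ends with a Player move and, by Definition \ref{def-stratvisint} of interaction together with closure of $\sigma$ and $\tau$ under prefixes, belongs to $\sigma$; dually, every odd-length prefix belongs to $\tau$. Since $\sigma$ is bounded by $k$, the Player-view (the matching view) of each even prefix has length $\leq k$; since $\tau$ is bounded by $k$, the Opponent-view of each odd prefix has length $\leq k$. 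For the complementary, ``wrong-parity'' view of a \emph{proper} prefix $r$, I would note that appending the next move $m$ of $s$ yields a prefix $r\,m$ of opposite parity whose matching view is precisely the complementary view of $r$ with $m$ appended (for instance, if $m$ is a Player move then $\pview{r\,m}=\pview{r}\,m$); hence that complementary view of $r$ has length $\leq k-1$. In particular the complexity $\com{s}{i}$, which by Definition \ref{def-ord} is always the matching view of $s_{0}\ldots s_{i}$, satisfies $\com{s}{i}\leq k$ for every $i$.

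With the view bound in hand I would invoke Theorem \ref{thm-abstB} on $s$, with principal player $p$ and bound $k$, obtaining
$$l\ \leq\ \underbrace{k^{k^{.^{.^{k^{k}}}}}}_{b'+1}\ \leq\ 2_{b'}^{k(\log k)2}.$$
Finally, since $b'\leq b$ and the hyperexponential tower is non-decreasing in its height (for $k\geq 1$), I replace $b'$ by $b$ to conclude
$$l\ \leq\ \underbrace{k^{k^{.^{.^{k^{k}}}}}}_{b+1}\ \leq\ \underbrace{2^{2^{.^{.^{2^{k(\log k)\cdot 2}}}}}}_{b+1}=2_{b}^{k(\log k)2}.$$

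The main obstacle I anticipate is precisely the view-length bookkeeping of the second paragraph. Theorem \ref{thm-abstB} is stated with $k$ equal to the longest view of \emph{any} initial segment for \emph{either} player, whereas boundedness of $\sigma$ (resp. $\tau$) directly constrains only the Player-views of $\sigma$-plays (resp. the Opponent-views of $\tau$-plays). One must therefore rule out that some ``wrong-parity'' view overshoots $k$. For proper prefixes the one-move-extension trick settles this (it even gives $\leq k-1$); the delicate case is the \emph{full} play $s$, where no further move is available to append, so one must check directly that the definition of boundedness—phrased through views rather than through raw length—still caps the complementary view of $s$ by $k$. Once this is secured, the theorem is just the composition of \ref{thm-maxmin} and \ref{thm-abstB} with the monotonicity of the tower in its height.
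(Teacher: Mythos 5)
Your proposal is correct and takes essentially the same route as the paper: apply Theorem~\ref{thm-maxmin} to get visibility and real backtracking level at most $b$, bound the view lengths using boundedness of $\sigma$ and $\tau$, and conclude by Theorem~\ref{thm-abstB}. You are in fact more careful than the paper, whose proof simply asserts $\pview{s'}\leq k$ and $\oview{s'}\leq k$ for every initial segment $s'$ of $s$; your parity analysis and one-move-extension argument for the wrong-parity views (including your flagging of the full-play case, which the paper's one-line claim also glosses over) supplies exactly the bookkeeping the paper omits.
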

\begin{proof}
By Theorem \ref{thm-maxmin}, $s$ is visible and of real backtracking level $b$. By Definition \ref{def-stratvisint} of $\sigma\star\tau$, every initial segment $s'$ of $s$ is in $\sigma$ or $\tau$ and since both strategies are bounded by $k$, $\pview{s'}\leq k$ and $\oview{s'}\leq k$. By Theorem \ref{thm-abstB}, 
$$l\ \leq \ \underbrace{k^{k^{.^{.^{k^{k}}}}}}_{b+1}\leq \underbrace{2^{2^{.^{.^{2^{k(\log k)\cdot 2}}}}}}_{b+1}$$
\end{proof}
%We remark that Theorem \ref{thm-B} holds regardless of the arena depth, as opposed to Clairambault's estimate \cite{Clairambaultjour}, which assumes that the arena depth is finite. 

\section{Backtracking Level and Arena Depth}\label{sec-gadget}
In this Section, we study the relationship between backtracking level and arena depth, removing the fourth and last obstacle in our way to our main result. One would expect that all plays and strategies relative to an arena of depth $d$ have backtracking level at most equal to $d$. But the following example shows that a badly specified strategy may have backtracking level $6$ in an arena of depth $5$:
 $$\begin{tikzpicture}
 \node (0) at (-0.9, 0) {$*$};
 \node (x) at (-0.1,0) {$\bullet_{1}$};
\node (a) at (0.7,0) {$\circ_{1}$};
\node (b) at (1.3,0) {$\bullet_{2}$};
\node (c) at (2.1,0) {$\circ_{2}$};
\node (d) at (2.6,0) {$\bullet_{3}$};
\node (e) at (3.4,0) {$\circ_{3}$};
\node (f)  at (3.9, 0)  {$\bullet_{4}$};
\node (g)  at (4.7, 0)  {$\circ_{4}$};
\node (h) at (5.2,0) {$\bullet_{5}$};
\node (i) at (6,0) {$\circ_{5}$};
\node (l) at (6.5,0) {$\bullet_{6}$};
\node (m) at (7.3,0) {$\circ_{6}$};
\node (n)  at (7.8, 0)  {$\bullet_{7}$};
\node (o)  at (8.6, 0)  {$\circ_{7}$};
\node (p) at (9.1,0) {$\bullet_{8}$};
\node (q) at (9.9,0) {$\circ_{8}$};
\node (r) at (10.4,0) {$\bullet_{9}$};
\node (s) at (11.2,0) {$\circ_{9}$};
\node (t)  at (11.7, 0)  {$\bullet_{10}$};
\draw[-latex, right]  (x) edge (0);
\draw[-latex, right]  (a) edge (x);
\draw[-latex, right]  (c) edge (b);
\draw[-latex, right]  (e) edge (d);
\draw[-latex, right]  (g) edge (f);
\draw[-latex, right]  (i) edge (h);
\draw[-latex, right]  (m) edge (l);
\draw[-latex, right]  (o) edge (n);
\draw[-latex, right]  (q) edge (p);
\draw[-latex, right]  (s) edge (r);
\draw[-latex,bend right]  (b) edge (0);
\draw[-latex,bend right]  (l) edge (a);
\draw[-latex,bend right]  (n) edge (c);
\draw[-latex,bend right]  (p) edge (e);
\draw[-latex,bend right]  (r) edge (g);
\draw[-latex,bend right]  (t) edge (i);
\end{tikzpicture}$$
For the sake of legibility we have not displayed the edges from the moves $\bullet_3,\bullet_{4}, \bullet_{5}$, which are all supposed to point to $*$. We have $\efrom{\bullet_{2}}\inact\efrom{\bullet_{6}}\inact \efrom{\bullet_{7}}\inact \efrom{\bullet_{8}}\inact \efrom{\bullet_{9}}\inact \efrom{\bullet_{10}}$, therefore $\bullet_{10}$ is of backtracking level $6$.  Of course, there is nothing special about the number 6: this construction can be generalized to produce strategies with as great a backtracking level as one wishes. The pattern is: the first $n$ black moves point to $*$, the following $n$ black moves point to the move that comes exactly $2(n-1)$ moves before.

This example notwithstanding, we are going to show that, by inserting some dummy moves into any play relative to an arena of depth $d\geq 2$, we can lower the backtracking level of the play to $d-2$. \\
How? For energy saving reasons, turning off the light when leaving a room is in order. Likewise, to avoid that the backtracking level of a strategy or of a play grows arbitrarily, it is enough to inactivate backtracking edges immediately after they have been played. This idea can be implemented by inserting in any play some administrative moves, whose only purpose is to lower the backtracking level of a selected player, in our case $O$. We insert a pair of moves $\off\;\on$ immediately after every $P$ move. In this way, before any non-starting $O$  move $\bullet$ there will be the move $\on$ and immediately after the $P$ move $\circ$ that follows $\bullet$ there will be the move $\off$. The sequence $\on\,\bullet\,\circ\,\off$ forms what we call an \textbf{inactivation gadget}: $\on$ ``turns on'' the edge from $\bullet$, while $\off$  ``turns it off''. We would want the move $\off$ to simply point back to the nearest occurrence of $\on$. That works when the play is an $O$-view, but in general, to preserve visibility, $\off$ must refer back to an occurrence of $\on$ which is placed much before.

\begin{definition}[Inactivation Gadget]\label{def-gadget} 
We add to our fixed arena $\mathbb{A}$ special moves $\on$ and $\off$ which, for simplicity,  are justified by any other move of opposite polarity and $\lambda(\on)=P$ and $\lambda(\off)=O$. We denote with $\mathbb{A}^{+}$ the enlarged arena.
Given a play $s=\bullet_{0}\,\circ_{0}\,\bullet_{1}\,\circ_{1}\ldots \bullet_{k}\,\circ_{k}\, \bullet_{k+1}$, we define  
$$s^{\medtriangledown}=  \bullet_{{0}}\, \circ_{{0}}\, \off\; \on\, \bullet_{{1}}\, \circ_{{1}}\, \off\;\on  \ldots \bullet_{{k}}\, \circ_{k}\, \off\; \on\, \bullet_{k+1}$$
%$$s^{\medtriangledown}=  \bullet_{{0}}\, \circ_{{0}}\, \off\; \on\, \bullet_{{1}}\, \circ_{{1}}\, \off  \ldots \on\,\bullet_{{k}}\, \circ_{k}\, \off\; \on\, \bullet_{k+1}$$
which is always assumed to be a play such that: the moves $\bullet_{i}$ and $\circ_{i}$ for $0\leq i\leq k+1$ point to the same moves they point to in $s$; the first occurrence of $\off$ points to $\circ_{0}$; for every edge $(\bullet_{i}, \circ_{j})$ in $s$, with $j>0$, the occurrence of the move $\off$ immediately following $\circ_{j}$,  points to the occurrence of the move $\on$ immediately preceding $\bullet_{i}$;   each occurrence of the move $\on$ points to the immediately preceding move $\off$. In a picture, when $j>0$
 $$\begin{tikzpicture}
% \node (0) at (0.2, 0) {$$};
\node (a) at (1,0) {$$};
\node (b) at (2,0) {$\on$};

\node (c) at (3,0) {$\bullet_{i}$};
\node       at (4,0) {$\ldots$};
\node (d) at (5,0) {$\circ_{j}$};

\node (e) at (6,0) {$\off$};
%\node (f)  at (7, 0)  {$\mathsf{n_{2}}$};
%\draw[-latex, right]  (a) edge (0);
\draw[-latex, right]  (b) edge (a);
\draw[-latex,bend right]  (d) edge (c);
\draw[-latex,bend right]  (e) edge (b);
\draw[-latex,bend right]  (c) edge (x);
%\draw[-latex, right]  (f) edge (e);
\end{tikzpicture}$$
We shall denote with $\on^{(i)}$ and $\off^{(i)}$ respectively the $i$-th occurrence of $\on$ and  the $i$-th occurrence of $\off$ in $s^{\medtriangledown}$.
\end{definition}

%\begin{definition}[Systematic Inactivation]
%Let $\medtriangledown\subseteq \mathbb{N}$ and $s=\bullet_{0}\,\circ_{0}\,\bullet_{1}\,\circ_{1}\ldots \bullet_{k}\,\circ_{k}$ be a play. We define $s^{\medtriangledown}$ as the sequence of obtained from $s$ by replacing, for every $i\in \medtriangledown$, with $1\leq i\leq k$, the sequence $\bullet_{i}\, \circ_{i+1}$ with $\on\, \bullet_{i}\, \circ_{i+1}\, \off$. Thus 
%$$s^{\medtriangledown}= \bullet_{0}\ldots \circ_{i_{1-1}} \on\, \bullet_{i_{1}}\, \circ_{i_{1}+1}\, \off\, \bullet_{i_{1}+2}\ldots \circ_{i_{2}-1}\,  \on\, \bullet_{i_{2}}\, \circ_{i_{2}+1}\, \off  \ldots \on\,\bullet_{i_{n}}\, \circ_{i_{n}+1}\, \off\ldots $$
%whenever $\medtriangledown=\{i_{1}, i_{2},\ldots, i_{n}\}$.
%the moves $\on$ between $s_{i-1}$ and $s_{i}$ and the moves $\off$ between $s_{i+1}$ and $s_{i+2}$. 
%\end{definition}
We now characterize the views of $s^{\medtriangledown}$ and prove that $s^{\medtriangledown}$ is a visible play over $\mathbb{A}^{+}$ whenever $s$ is a visible play over $\mathbb{A}$. In particular we show that the operation of $O$-view commutes with $^{\medtriangledown}$, which means: $\oview{s^{\medtriangledown}}=\oview{s}^{\medtriangledown}$.  Moreover, the operation of $P$-view eliminates all the gadgets:   $\pview{s^{\medtriangledown}}=\pview{s}$.
\begin{proposition}[Views and Visibility of $s^{\medtriangledown}$]\label{prop-gadgetvis}
Let $s=\bullet_{0}\,\circ_{0}\,\bullet_{1}\,\circ_{1}\ldots \bullet_{k}\,\circ_{k}\, \bullet_{k+1}$ be a visible play over $\mathbb{A}$.
\begin{enumerate}
\item Assume $0\leq j\leq k$ and there are indexes $i_{1},j_{1},\ldots, i_{m}, j_{m}$ such that
$$\oview{\bullet_{0}\,\circ_{0}\,\bullet_{1}\,\circ_{1}\ldots \bullet_{j}\,\circ_{j}}=\bullet_{i_{1}}\, \circ_{j_{1}}\ldots\bullet_{{i_{m}}}\, \circ_{j_{m}}$$
Then
$$\oview{(\bullet_{{0}}\, \circ_{{0}}\, \off\; \on\, \bullet_{{1}}\, \circ_{{1}}\, \off\;\on  \ldots \bullet_{{j}}\, \circ_{j})} =  \bullet_{{i_{1}}}\, \circ_{{j_{1}}}\, \off\; \on\, \bullet_{{i_{2}}}\, \circ_{{j_{2}}}\, \off\;\on \ldots \bullet_{i_{m}}\, \circ_{j_{m}}$$
$$\oview{(\bullet_{{0}}\, \circ_{{0}}\, \off\;\on \ldots \bullet_{{j}}\, \circ_{j}\, \off\,\on\, \bullet_{j+1})} = \bullet_{i_{1}}\, \circ_{j_{1}}\, \off\;\on \ldots \bullet_{{i_{m}}}\, \circ_{j_{m}}\, \off\,\on\, \bullet_{j+1}$$
$$\pview{(\bullet_{{0}}\, \circ_{{0}}\, \off\;\on \ldots \bullet_{{j}}\, \circ_{j}\, \off\,\on\, \bullet_{j+1})} = \pview{\bullet_{0}\,\circ_{0}\,\bullet_{1}\,\circ_{1}\ldots \bullet_{j}\,\circ_{j}\,\bullet_{j+1}}$$
$$\pview{(\bullet_{{0}}\, \circ_{{0}}\, \off\; \on\, \bullet_{{1}}\, \circ_{{1}}\, \off\;\on  \ldots \bullet_{{j}}\, \circ_{j} \off)} =  \on^{(i_{1})}\, \off^{(j_1)}\, \ldots\ \on^{(i_{m})}\, \off^{(j_m)}$$

\item $s^{\medtriangledown}$ is a visible play over $\mathbb{A}^{+}$. 
\end{enumerate}
\end{proposition}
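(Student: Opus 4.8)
The plan is to prove both parts by a single strong induction on $j$, establishing the four view identities of part~1 simultaneously and then reading off visibility (part~2) from them. The guiding structural insight is that the three view operations behave uniformly on $s^{\medtriangledown}$: taking the $O$-view commutes with gadget insertion (it reproduces the $O$-view of $s$ with a pair $\off\,\on$ spliced in front of every non-initial $O$-move), taking the $P$-view of a prefix ending in an original $O$-move $\bullet_{j+1}$ erases every gadget (because in $s^{\medtriangledown}$ the move $\bullet_{j+1}$ still points directly to the original move $\circ_{l'}$ it answers in $s$, jumping over all intervening $\off\,\on$ blocks), while taking the $P$-view of a prefix ending in a gadget move $\off$ runs in exact lockstep with the $O$-view computation, returning the ``dual'' alternating sequence of $\on$ and $\off$ moves whose occurrence indices mirror the pairs $\bullet_{i_l}\,\circ_{j_l}$ of the corresponding $O$-view.

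First I would fix the induction hypothesis to be the conjunction of all four identities for every prefix ending at a move of index at most $j$ (and at the following $\off$); strong induction is needed because the pointers of $\circ_{j+1}$ and $\bullet_{j+1}$ reach back to earlier original moves. The base case $j=0$ is checked by direct computation, paying attention to the special first occurrence $\off^{(1)}$, which by Definition~\ref{def-gadget} points to $\circ_0$ rather than to an $\on$; this special pointer is exactly what lets the $P$-view-at-$\off$ recursion bottom out at the initial segment $\bullet_0\,\circ_0\,\off^{(1)}$.

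For the inductive step I would process the block $\off^{(j+1)}\,\on^{(j+1)}\,\bullet_{j+1}\,\circ_{j+1}$ one move at a time, applying the recursive clauses of the view definitions together with the gadget pointers of Definition~\ref{def-gadget}. Concretely: appending $\off^{(j+1)}$ (an $O$-move) and then $\on^{(j+1)}$ (a $P$-move pointing to the immediately preceding $\off^{(j+1)}$) leaves the $O$-view unchanged except for the spliced pair, and appending $\bullet_{j+1}$ then yields the second identity from the first; the third identity follows because the $P$-view clause for $\bullet_{j+1}$ discards the whole interval between $\circ_{l'}$ and $\bullet_{j+1}$, after which one invokes the induction hypothesis for the prefix ending at $\bullet_{l'}$; finally the fourth identity is obtained by observing that following the pointer of $\off^{(j+1)}$ to the $\on$ just before $\bullet_{i_m}$, then the pointer of that $\on$ back to the preceding $\off$, and so on, visits precisely the moves $\on^{(i_m)},\off^{(i_m)},\on^{(i_{m-1})},\ldots$ sitting between the $\circ_{j_{l-1}}$ and $\bullet_{i_l}$ of the $O$-view, so the two recursions advance in step. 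The first identity at $j+1$ is recovered by appending $\circ_{j+1}$ and following its pointer to $\bullet_{i'}$ exactly as the $O$-view of $s$ does, using the hypothesis at the prefix ending in $\circ_{i'-1}$.

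With the four identities in hand, part~2 is routine. Alternation and the backward-pointing property are immediate from the insertion pattern and the gadget pointers, so $s^{\medtriangledown}$ is a play over $\mathbb{A}^{+}$. For visibility I would check each move against the appropriate view: $\circ_i$ and $\bullet_i$ keep their $s$-targets, which lie in the corresponding view of $s^{\medtriangledown}$ because that view is the gadget-insertion of the $s$-view containing the target (identities one to three, together with the visibility of $s$); each $\on$ points to the $\off$ immediately preceding it, which is the final move of the $P$-view given by identity four; and each $\off^{(j+1)}$ points to the $\on$ preceding the move $\circ_j$ answers, which identity one places in the $O$-view of the prefix ending at $\circ_j$. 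The main obstacle I anticipate is identity four: keeping the lockstep between the $O$-view and the $P$-view-at-$\off$ recursions honest requires careful bookkeeping of the gadget occurrence indices and separate treatment of the initial segment, where $\off^{(1)}$ points to $\circ_0$ instead of to an $\on$.
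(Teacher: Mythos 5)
Your proposal follows essentially the same route as the paper's own proof: a simultaneous induction on $j$ that establishes the four view identities by processing the block $\off\,\on\,\bullet_{j+1}\,\circ_{j+1}$ through the recursive view clauses and the gadget pointers of Definition \ref{def-gadget} (invoking the hypothesis at the earlier prefix each pointer reaches back to), followed by a case analysis that derives visibility of $s^{\medtriangledown}$ from those identities exactly as the paper does. The only blemish is a harmless index slip in your sketch of the fourth identity — the $\off$ occurrences should carry the $j$-indices mirroring the $O$-view pairs (i.e.\ $\off^{(j_l)}$, not $\off^{(i_m)}$) — which the careful occurrence bookkeeping you already plan would catch.
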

\begin{proof}
\mbox{}
\begin{enumerate}
\item
 By induction on $j$.  Let
 $$r=\bullet_{{0}}\, \circ_{{0}}\, \off\; \on\, \bullet_{{1}}\, \circ_{{1}}\, \off\;\on  \ldots \bullet_{{j}}\, \circ_{j}$$
and let $\efrom{\circ_{j}}=(\bullet_{i}, \circ_{j})$. Then
 $$r=\bullet_{{0}}\, \circ_{{0}}\, \off\;\on\, \bullet_{{1}}\, \circ_{{1}}\, \off\;\on\ldots \bullet_{{i-1}}\, \circ_{i-1}\, \off\, \on\, \bullet_{i}\ldots \circ_{j}$$
By definition of $O$-view
$$\oview{r}=\oview{(\bullet_{{0}}\, \circ_{{0}}\, \off\;\on\, \bullet_{{1}}\, \circ_{{1}}\, \off\;\on \ldots \bullet_{{i-1}}\, \circ_{i-1})}\, \off\,\on\, \bullet_{i}\, \circ_{j}$$
Letting 
$$\oview{\bullet_{0}\,\circ_{0}\,\bullet_{1}\,\circ_{1}\ldots \bullet_{i-1}\, \circ_{i-1}}=\bullet_{i_{1}}\, \circ_{j_{1}}\ldots\bullet_{{j_{n}}}\, \circ_{j_{n}}$$
we have by definition of $O$-view and induction hypothesis
$$\oview{\bullet_{0}\,\circ_{0}\,\bullet_{1}\,\circ_{1}\ldots \bullet_{j}\,\circ_{j}}=\bullet_{i_{1}}\, \circ_{j_{1}}\ldots\bullet_{{j_{n}}}\, \circ_{j_{n}}\,\bullet_{i}\circ_{j}$$
$$\oview{r}=\bullet_{{i_{1}}}\, \circ_{{j_{1}}}\, \off\; \on  \ldots \bullet_{i_{n}}\, \circ_{j_{n}}\, \off\, \on \, \bullet_{i}\, \circ_{j}$$
which establishes the first equation of the thesis. The second one is established by
$$\begin{aligned} 
&\oview{(\bullet_{{0}}\, \circ_{{0}}\, \off\;\on \ldots \bullet_{{j}}\, \circ_{j}\, \off\,\on\, \bullet_{j+1})}\\
=& \oview{(\bullet_{{0}}\, \circ_{{0}}\, \off\;\on \ldots \bullet_{{j}}\, \circ_{j})}\, \off\,\on\, \bullet_{j+1}\\
= &\,\bullet_{i_{1}}\, \circ_{j_{1}}\, \off\;\on \ldots \bullet_{{i_{m}}}\, \circ_{j_{m}}\, \off\,\on\, \bullet_{j+1}
\end{aligned}
$$
We now prove the third equation. Assume  $\efrom{\bullet_{j+1}}=(\circ_{i}, \bullet_{j+1})$. Then by induction hypothesis
$$\begin{aligned} 
&\pview{(\bullet_{{0}}\, \circ_{{0}}\, \off\;\on \ldots \bullet_{{j}}\, \circ_{j}\, \off\,\on\, \bullet_{j+1})}\\
=& \pview{(\bullet_{{0}}\, \circ_{{0}}\, \off\;\on \ldots \on\;\off\,\bullet_{{i}})}\, \circ_{i}\, \bullet_{j+1}\\
=&  \pview{(\bullet_{{0}}\, \circ_{{0}} \ldots \bullet_{{i}})}\, \circ_{i}\, \bullet_{j+1}\\
=&  \pview{(\bullet_{{0}}\, \circ_{{0}} \ldots \bullet_{{j}}\, \circ_{j}\, \bullet_{j+1})}
\end{aligned}
$$
Finally, let us prove the fourth equation. Let
 $$r=\bullet_{{0}}\, \circ_{{0}}\, \off\; \on\, \bullet_{{1}}\, \circ_{{1}}\, \off\;\on  \ldots \bullet_{{j}}\, \circ_{j} \off$$
and let $\efrom{\circ_{j}}=(\bullet_{i}, \circ_{j})$. Then
 $$r=\bullet_{{0}}\, \circ_{{0}}\, \off\;\on \ldots \bullet_{{i-1}}\, \circ_{i-1}\, \off\, \on^{(i)}\, \bullet_{i}\ldots \circ_{j}\,\off^{(j)}$$
 and there is an edge $(\on^{(i)}, \off^{(j)})$ in $r$.
By definition of $P$-view
$$\pview{r}=\pview{(\bullet_{{0}}\, \circ_{{0}}\, \off\;\on \ldots \bullet_{{i-1}}\, \circ_{i-1}\, \off )}\, \on^{(i)}\,\off^{(j)}$$
Letting 
$$\oview{\bullet_{0}\,\circ_{0}\,\bullet_{1}\,\circ_{1}\ldots \bullet_{i-1}\, \circ_{i-1}}=\bullet_{i_{1}}\, \circ_{j_{1}}\ldots\bullet_{{j_{n}}}\, \circ_{j_{n}}$$
we have by definition of $O$-view and induction hypothesis
$$\oview{\bullet_{0}\,\circ_{0}\,\bullet_{1}\,\circ_{1}\ldots \bullet_{j}\,\circ_{j}}=\bullet_{i_{1}}\, \circ_{j_{1}}\ldots\bullet_{{j_{n}}}\, \circ_{j_{n}}\,\bullet_{i}\circ_{j}$$
$$\pview{r}=\on^{(i_{1})}\, \off^{(j_1)} \ldots\ \on^{(i_{n})}\, \off^{(j_n)}\, \on^{(i)}\,\off^{(j)}$$
which is the thesis.

\item By Definition \ref{def-gadget}
$$s^{\medtriangledown}=  \bullet_{{0}}\, \circ_{{0}}\, \off\; \on\, \bullet_{{1}}\, \circ_{{1}}\, \off\;\on  \ldots \bullet_{{k}}\, \circ_{k}\, \off\;\on\, \bullet_{k+1}$$
Let $r\, a$ be any initial segment of $s^{\medtriangledown}$, with $r$ non empty. 
%where $a$ is a non-initial move.
 We must show that $a$ points to a move in $\pview{r}$ if $\lambda(a)=P$, and that $a$ points to a move in $\oview{r}$ if $\lambda(a)=O$. The thesis is trivial when $a$ points to the immediately previous move, thus when $a=\on$ or $a=\off^{(0)}$. We have other three cases, according to the remaning possible shapes of $a$.
\begin{itemize}
%\item If $a=\mathsf{m}_{2}^{(j)}$, then $\lambda(a)=P$ and  by Definition \ref{def-gadget}, $\pview{r}=\mathsf{m}_{1}^{(j)}$. Since $\mathsf{m}_{2}^{(j)}$ points to $\mathsf{m}_{1}^{(j)}$, we are done.

%\item If $a=\mathsf{n}_{2}^{(j)}$, then $\lambda(a)=P$ and by Definition \ref{def-gadget} $$r=\ldots \mathsf{m}_{1}^{(i)}\, \mathsf{m}_{2}^{(i)}\, \bullet_{i}\ldots \circ_{j}\,\mathsf{n}_{1}^{(j)}$$ and the edges $(\bullet_{i}, \circ_{j})$ and $(\mathsf{m}_{2}^{(i)}, \mathsf{n}_{1}^{(j)})$ are in $s^{\medtriangledown}$. Since $\mathsf{n}_{2}^{(j)}$ points to $\mathsf{n}_{1}^{(j)}$ and $\pview{r}=\mathsf{m}_{1}\, \mathsf{m}_{2}^{(i)}\, \mathsf{n}_{1}^{(j)}$, we are done.

\item If $a=\off^{(j)}$, with $j>0$, then $\lambda(a)=O$ and by Definition \ref{def-gadget} 
$$r=\ldots \on^{(i)}\, \bullet_{i}\ldots \circ_{j}$$
and the edges $(\bullet_{i}, \circ_{j})$ and $(\on^{(i)}, \off^{(j)})$ are in $s^{\medtriangledown}$.
Since  $\oview{r}=\ldots \on^{(i)}\,\bullet_{i}\,\circ_{j}$, we are done.

\item If $a=\circ_{0}$, then $\lambda(a)=P$, thus $\pview{r}=\bullet_{0}$ and we are done. If $a=\circ_{j+1}$, then $\lambda(a)=P$ and by the third equation of point 1., we have 
$$\pview{r}=\pview{(\bullet_{{0}}\, \circ_{{0}}\, \off\;\on \ldots \bullet_{{j}}\, \circ_{j}\, \off\,\on\, \bullet_{j+1})} = \pview{\bullet_{0}\,\circ_{0}\,\bullet_{1}\,\circ_{1}\ldots \bullet_{j}\,\circ_{j}\,\bullet_{j+1}}$$
Since $s$ is visible, $\circ_{j+1}$ points in $s$ -- and thus in $s^{\medtriangledown}$ -- to some $\bullet_{i}$ in $\pview{r}$, with $0\leq i\leq j+1$, and we are done.
\item If $a=\bullet_{j+1}$, then $\lambda(a)=O$ and as a consequence of the first equation of  point 1., 
we have 
$$\oview{r}=\oview{(\bullet_{{0}}\, \circ_{{0}}\, \off\;\on \ldots \bullet_{{j}}\, \circ_{j}\, \off\,\on)} = \,\bullet_{i_{1}}\, \circ_{j_{1}}\, \off\;\on \ldots \bullet_{{j_{m}}}\, \circ_{j_{m}}\, \off\,\on$$
with 
$$\oview{\bullet_{0}\,\circ_{0}\ldots \bullet_{j}\,\circ_{j}}=\bullet_{i_{1}}\, \circ_{j_{1}}\ldots\bullet_{{j_{m}}}\, \circ_{j_{m}}$$
Since $s$ is visible, $\bullet_{j+1}$ points in $s$ -- and thus in $s^{\medtriangledown}$ --  to some $\circ_{j_{n}}$, with $1\leq n\leq m$, and we are done.
\end{itemize}
\end{enumerate}
\end{proof}
Thanks to the results of Section \S\ref{sec-maxmin}, we know that in order to compute the backtracking level of a visible play, it is sufficient to compute the backtracking level of its views. Thus we start by proving that for every $O$-view $s$, the transformation mapping $s$ into $s^{\medtriangledown}$ effectively lowers the backtracking level of $O$ to $d-2$, assuming the arena is of depth $d\geq 2$. 
\begin{lemma}\label{lem-backview}
Let $s=\bullet_{0}\,\circ_{0}\,\bullet_{1}\,\circ_{1}\ldots \bullet_{k}\,\circ_{k}\, \bullet_{k+1}$ be a visible play over the arena $\mathbb{A}$ and $d\geq 2$ the depth of $\mathbb{A}$. Assume that for all $i$, the edge $(\bullet_{i},\circ_{i})$ is in $s$. %Suppose $\medtriangledown\subseteq \mathbb{N}$ is such that: $$(i\in \medtriangledown\ \land\  (\circ_{i+1}, \bullet_{j})=\efrom{\bullet_{j}})\implies j\in\medtriangledown$$ and for every $i$, if there exists in $s$ an inactivation chain $\efrom{\bullet_{i_{1}}}\inact \efrom{\bullet_{i_{2}}}\inact\ldots \inact\efrom{\bullet_{i_{n}}}$, with $n\geq d$, then for every $j$, $i_{j}\in \medtriangledown$. \\
  Then the backtracking level of $O$ in $s^{\medtriangledown}$ is less than or equal to $d-2$. 
\end{lemma}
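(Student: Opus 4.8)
The plan is to analyse directly the inactivation relation between $O$-edges of $s^{\medtriangledown}$. By Proposition \ref{prop-gadgetvis} we already know $s^{\medtriangledown}$ is a visible play over $\mathbb{A}^{+}$, and by Definition \ref{def-backlevel} the backtracking level of $O$ is governed by the longest inactivation chains ending in an $O$-edge. Since $s$ is an $O$-view, all its $P$-moves point to the immediately preceding move, and the gadget $P$-moves $\on$ do too, so every $P$-edge is trivial and only $O$-edges matter. The $O$-edges of $s^{\medtriangledown}$ come in two kinds: the original edges $\efrom{\bullet_{j}}$ inherited from $s$, and the gadget edges $\efrom{\off_{j}}=(\on_{j},\off_{j})$, where $\on_{j}$ is the $\on$ immediately preceding $\bullet_{j}$; the first gadget edge $\efrom{\off_{0}}=(\circ_{0},\off_{0})$ is the only trivial one.

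First I would record the elementary crossing facts, each an immediate position check on the block $\on_{j}\,\bullet_{j}\,\circ_{j}\,\off_{j}$: (i) $\efrom{\bullet_{j}}\cross\efrom{\off_{j}}$, and in fact $\efrom{\bullet_{j}}\inact\efrom{\off_{j}}$ always holds, since nothing crosses $\efrom{\bullet_{j}}$ in the segment ending at $\circ_{j}$; (ii) a gadget edge $\efrom{\off_{i}}$ is crossed by a bullet edge only by those $\efrom{\bullet_{j}}$ for which $\bullet_{j}$ points to $\circ_{i}$, i.e. $\circ_{i}\vdash\bullet_{j}$; (iii) no gadget edge crosses another, and $\efrom{\off_{j}}$ crosses a bullet edge $\efrom{\bullet_{i}}$ only when $i=j$. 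Consequently the only inactivations of the forms $\efrom{\bullet}\inact\efrom{\off}$ and $\efrom{\off}\inact\efrom{\bullet}$ are, respectively, $\efrom{\bullet_{j}}\inact\efrom{\off_{j}}$ and $\efrom{\off_{i}}\inact\efrom{\bullet_{j}}$ with $\circ_{i}\vdash\bullet_{j}$, and there is no $\efrom{\off}\inact\efrom{\off}$.

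The crux is to rule out any inactivation $\efrom{\bullet_{i}}\inact\efrom{\bullet_{j}}$. For this I would prove the shielding lemma: whenever $\efrom{\bullet_{i}}\cross\efrom{\bullet_{j}}$ (so $i<j$), the edge $\efrom{\bullet_{i}}$ is already inactive just before $\bullet_{j}$. The argument is a descent. If $\efrom{\bullet_{i}}$ were active there, then its gadget crosser $\efrom{\off_{i}}$, which sits before $\bullet_{j}$, would have to be inactive, hence crossed by some active $\efrom{\bullet_{j_{1}}}$ with $\circ_{i}\vdash\bullet_{j_{1}}$ and $i<j_{1}<j$; but $\efrom{\bullet_{j_{1}}}$ active forces its own gadget $\efrom{\off_{j_{1}}}$ inactive, producing $\efrom{\bullet_{j_{2}}}$ with $j_{1}<j_{2}<j$, and so on. This yields a strictly increasing sequence $i<j_{1}<j_{2}<\cdots<j$ that can always be extended, which is impossible. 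Hence $\efrom{\bullet_{i}}\not\inact\efrom{\bullet_{j}}$; in particular the only edge that can inactivate a bullet edge is its own gadget edge.

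With the inactivation graph pinned down, every inactivation chain of $O$-edges must alternate
$$\efrom{\bullet_{a_{1}}}\inact\efrom{\off_{a_{1}}}\inact\efrom{\bullet_{a_{2}}}\inact\efrom{\off_{a_{2}}}\inact\cdots$$
with $\circ_{a_{k}}\vdash\bullet_{a_{k+1}}$ for each $k$, and, since a bullet edge is inactivated only by its own gadget edge (which must be non-trivial), $a_{1}\geq 1$. Reading off the enabling steps, the chain traces a simple play $\bullet_{a_{1}}\vdash\circ_{a_{1}}\vdash\bullet_{a_{2}}\vdash\cdots\vdash\bullet_{a_{r}}\vdash\circ_{a_{r}}$ in $\mathbb{A}$. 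Because $a_{1}\geq 1$, the move $\bullet_{a_{1}}$ is not initial: it points to some $\circ_{c}$ with $\bullet_{c}\vdash\circ_{c}\vdash\bullet_{a_{1}}$, so $\bullet_{a_{1}}$ is at distance at least $3$ from an initial move. Hence the justification chain from an initial move down to $\circ_{a_{r}}$ has length at least $2r+2$, and since $\mathbb{A}$ has depth $d$ this gives $2r+2\leq d$. A chain ending in $\efrom{\off_{a_{r}}}$ has $2r-1$ edges below it, so $\off_{a_{r}}$ has backtracking level $2r\leq d-2$; chains ending in a bullet edge are shorter and bounded the same way. Taking the maximum over all $O$-moves yields backtracking level of $O$ at most $d-2$. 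The main obstacle is the shielding lemma of the third paragraph: it is precisely where the gadget does its work, and controlling the descent (using that the crossers of a gadget edge are only the bullet edges re-answering the same move) is exactly what makes the final bound come out to $d-2$ rather than $d$.
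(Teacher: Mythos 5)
Your proof is correct and follows essentially the same route as the paper: the paper's central claim (that each $\efrom{\bullet_{l}}$ is inactive in every prefix ending with $\on$, proved by induction on $j-i$ using that the only non-trivial edges crossing a gadget edge $\efrom{\off^{(i)}}$ are bullet edges re-answering $\circ_{i}$) is exactly your shielding lemma, which you establish instead by deriving an impossible strictly increasing sequence of indices below $j$, and both proofs then read off the same alternating $\efrom{\bullet}/\efrom{\off}$ inactivation chains and convert them into a simple play over $\mathbb{A}$ of length at least $2r+2$, yielding the bound $d-2$. The only cosmetic difference is that you merge the paper's two chain shapes into a single one by observing that $\efrom{\bullet_{j}}\inact\efrom{\off^{(j)}}$ always holds (and $\efrom{\off^{(0)}}$ is trivial), so every maximal chain begins with a bullet edge $\efrom{\bullet_{a_{1}}}$ with $a_{1}\geq 1$.
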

\begin{proof}  
By Definition \ref{def-gadget}
$$s^{\medtriangledown}=  \bullet_{{0}}\, \circ_{{0}}\, \off\; \on\, \bullet_{{1}}\, \circ_{{1}}\, \off\;\on  \ldots \bullet_{{k}}\, \circ_{k}\, \off\;\on\, \bullet_{k+1}$$
We first prove that if $0<i<k+1$, then $\efrom{\bullet_{j}}$ is inactive for every $j\geq i$ in the following initial segment of $s^{\medtriangledown}$:
$$r\, =\, \bullet_{{0}}\, \circ_{{0}}\, \off\ldots \on\, \bullet_{{i}}\, \circ_{{i}}\, \off  \ldots \on\,\bullet_{{j}}\, \circ_{j}\, \off\; \on$$
We argue by induction on $j-i$.  By Definition \ref{def-gadget}, for $i\leq l\leq j$, the sequence $\on\, \bullet_{{l}}\, \circ_{{l}}\, \off$, with justification pointers, must have the following shape:
 $$\begin{tikzpicture}
\node (a) at (1,0) {$$};
\node (b) at (2,0) {$\on$};
\node (c) at (3,0) {$\bullet_{l}$};
\node (d) at (4,0) {$\circ_{l}$};
\node (e) at (5,0) {$\off$};
\draw[-latex, right]  (b) edge (a);
\draw[-latex,bend right]  (c) edge (x);
\draw[-latex, right]  (d) edge (c);
\draw[-latex,bend right]  (e) edge (b);
\end{tikzpicture}$$
In particular the sequence $\on\, \bullet_{{j}}\, \circ_{{j}}\, \off$ has the following shape:
 $$\begin{tikzpicture}
\node (a) at (1,0) {$$};
\node (b) at (2,0) {$\on$};
\node (c) at (3,0) {$\bullet_{j}$};
\node (d) at (4,0) {$\circ_{j}$};
\node (e) at (5,0) {$\off$};
\draw[-latex, right]  (b) edge (a);
\draw[-latex,bend right]  (c) edge (x);
\draw[-latex, right]  (d) edge (c);
\draw[-latex,bend right]  (e) edge (b);
\end{tikzpicture}$$
Since the above $j$-th occurrence of $\efrom{\off}$ is active in $r$, we infer that $\efrom{\bullet_{j}}$ is inactive in $r$. Moreover, for every $l$ such that $i< l < j$, by induction hypothesis $\efrom{\bullet_{l}}$ is inactive in $r$.   Now consider the sequence $\on\, \bullet_{{i}}\, \circ_{{i}}\, \off$:
$$\begin{tikzpicture}
\node (a) at (1,0) {$$};
\node (b) at (2,0) {$\on$};
\node (c) at (3,0) {$\bullet_{i}$};
\node (d) at (4,0) {$\circ_{i}$};
\node (e) at (5,0) {$\off$};
\draw[-latex, right]  (b) edge (a);
\draw[-latex,bend right]  (c) edge (x);
\draw[-latex, right]  (d) edge (c);
\draw[-latex,bend right]  (e) edge (b);
\end{tikzpicture}$$
Since every edge $e$ of $r$ such that $\efrom{\off}\cross e$ must be of the form  $e=\efrom{\bullet_{l}}$, with $i<l\leq j$ and $e$ is inactive in $r$, we conclude that $\efrom{\off}$ is active in $r$, and finally that $\efrom{\bullet_{i}}$ is inactive in $r$.\\ 
From what we have just proved we conclude that for no $i,j$ it is the case that  $\efrom{\bullet_{i}}\inact \efrom{\bullet_{j}}$ in $s^{\medtriangledown}$. Hence, every maximal inactivation chain $c$ in $s^{\medtriangledown}$ that determines the backtracking level of $O$ is contained in a chain having one of the two following shapes:
$$\efrom{\bullet_{i_{1}}}\inact \efrom{\off^{(i_{1})}}\inact\efrom{\bullet_{i_{2}}}\inact \efrom{\off^{(i_{2})}}\inact\ldots\inact \efrom{\bullet_{i_{m}}}\inact \efrom{\off^{(i_{m})}}$$
$$ \efrom{\off^{(i_{1})}}\inact\efrom{\bullet_{i_{2}}}\inact \efrom{\off^{(i_{2})}}\inact\ldots\inact \efrom{\bullet_{i_{m}}}\inact \efrom{\off^{(i_{m})}}$$
%where $\off^{(j)}$ denotes the $j$-th occurrence of $\off$ in $s^{\medtriangledown}$. 
In the first case, there is a play
$$\bullet_{i_{1}}\leftarrow \circ_{i_{1}}\leftarrow \bullet_{i_{2}}\leftarrow \circ_{i_{2}}\leftarrow \ldots\leftarrow \bullet_{i_{m}}\leftarrow \circ_{i_{m}}$$
Since $\bullet_{i_{1}}$ points backward, it cannot be the first move and thus points to a Player move, which in turn must point to an Opponent move; therefore, $\bullet_{i_{1}}$  must be the last move of a simple play over $\mathbb{A}$ of at least $3$ moves. Since the arena depth is  $d$, we conclude $2m\leq d-2$, which is the thesis. In the second case, there is a play 
$$ \circ_{i_{1}}\leftarrow \bullet_{i_{2}}\leftarrow \circ_{i_{2}}\leftarrow \ldots\leftarrow \bullet_{i_{m}}\leftarrow \circ_{i_{m}}$$
Now, $i_{1}=0$ is impossible, for $\off^{(0)}$ points to the immediately preceding move $\circ_{0}$ and it cannot be inactivated by any edge. Therefore, $\circ_{i_{1}}$ points to the immediately preceding Opponent move, which points to a Player move pointing to yet another Opponent move; this means that $\circ_{i_{1}}$ must be the last move of a simple play over $\mathbb{A}$ of at least $4$ moves. We conclude $2m-1\leq d-2$, which is the thesis. 
\end{proof}
We are now in a position to prove that for any visible play $s$ over $\mathbb{A}$, the backtracking level of $O$ in $s^{\medtriangledown}$ is less than or equal to the depth $d$ of the arena minus $2$.
\begin{theorem}[Backtracking Level Normalization]\label{thm-downback}
Let $s=\bullet_{0}\,\circ_{0}\,\bullet_{1}\,\circ_{1}\ldots \bullet_{k}\,\circ_{k}\, \bullet_{k+1}$ be a visible play over the arena $\mathbb{A}$ and $d\geq 2$ the depth of $\mathbb{A}$. %Suppose $\medtriangledown\subseteq \mathbb{N}$ is such that: $$(i\in \medtriangledown\ \land\  (\circ_{i+1}, \bullet_{j})=\efrom{\bullet_{j}})\implies j\in\medtriangledown$$ and for every $i$, if there exists in $s$ an inactivation chain $\efrom{\bullet_{i_{1}}}\inact \efrom{\bullet_{i_{2}}}\inact\ldots \inact\efrom{\bullet_{i_{n}}}$, with $n\geq d$, then for every $j$, $i_{j}\in \medtriangledown$. \\
  Then the backtracking level of $O$ in $s^{\medtriangledown}$ is less than or equal to $d-2$. 
\end{theorem}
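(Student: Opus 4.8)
The plan is to \emph{reduce the statement to its special case} Lemma~\ref{lem-backview}, in which the play is already an $O$-view. The bridge is the principle, established in Section~\ref{sec-maxmin}, that a player's backtracking level is entirely read off from the views of the play, combined with the fact from Proposition~\ref{prop-gadgetvis} that the gadget construction commutes with the $O$-view operator.

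First I would observe that the backtracking level of every single $O$-move of $s^{\medtriangledown}$ is governed by an $O$-view. Fix an $O$-move $a$ of $s^{\medtriangledown}$ and let $w_{a}$ be the $O$-view of the initial segment of $s^{\medtriangledown}$ ending at $a$. By Proposition~\ref{prop-back2} (applied with $p=O$ and $e_{n}=\efrom{a}$), the longest inactivation chain ending at $\efrom{a}$ in $s^{\medtriangledown}$ has the same length as the longest such chain in $w_{a}$; hence $\blev(a)$ is computed entirely inside $w_{a}$, independently of what happens in $s^{\medtriangledown}$ after $a$. Consequently $O\blev(s^{\medtriangledown})=\max_{a}\blev(a)$ is bounded as soon as we bound, for every $O$-view $w_{a}$ arising this way, the backtracking level of its final $O$-move.

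Next I would identify these $O$-views using Proposition~\ref{prop-gadgetvis}(1): the $O$-view of any initial segment of $s^{\medtriangledown}$ is the gadget transform $^{\medtriangledown}$ of the $O$-view of the corresponding initial segment of $s$. An $O$-view of $s$ has, by the very definition of $\oview{\cdot}$, the alternating shape $\bullet_{i_{1}}\,\circ_{j_{1}}\ldots\bullet_{i_{m}}\,\circ_{j_{m}}$ in which each Player move $\circ_{j_{l}}$ points to the immediately preceding Opponent move $\bullet_{i_{l}}$ --- precisely the hypothesis of Lemma~\ref{lem-backview}. When $a$ is one of the original Opponent moves $\bullet_{j+1}$, the view $w_{a}$ is literally $t^{\medtriangledown}$ for such an $O$-view $t$ of $s$ ending in an Opponent move, so Lemma~\ref{lem-backview} applies verbatim and yields $\blev(a)\leq O\blev(t^{\medtriangledown})\leq d-2$.

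The remaining case, which I expect to be the \textbf{main obstacle}, is when $a$ is one of the administrative moves $\off$: the $O$-view ending at $\off^{(j)}$ has the form $\bullet_{i_{1}}\,\circ_{j_{1}}\,\off\,\on\ldots\bullet_{i_{m}}\,\circ_{j_{m}}\,\off^{(j)}$, which terminates in an $\off$ rather than in an Opponent move of $\mathbb{A}$, so it is not literally of the form $t^{\medtriangledown}$ and Lemma~\ref{lem-backview} cannot be quoted directly. Here I would note that this view is exactly the prefix, up to and including $\off^{(j)}$, of the transform $t^{\medtriangledown}$ of the genuine $O$-view $t=\oview{\bullet_{0}\ldots\circ_{j}\,\bullet_{j+1}}$ (which does end in the Opponent move $\bullet_{j+1}$ and hence meets the hypothesis of Lemma~\ref{lem-backview}). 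Since, as noted above, $\blev(\off^{(j)})$ depends only on the initial segment up to $\off^{(j)}$, it coincides with the backtracking level of that same $\off$ inside $t^{\medtriangledown}$, and Lemma~\ref{lem-backview} bounds the latter by $d-2$. Taking the maximum over all Opponent moves $a$ of $s^{\medtriangledown}$ then gives $O\blev(s^{\medtriangledown})\leq d-2$. The delicate point throughout is this endpoint bookkeeping: verifying, via Proposition~\ref{prop-gadgetvis}, that every $O$-view occurring in $s^{\medtriangledown}$ --- including those cut off at an administrative $\off$ --- is captured (as a view or as such a prefix) by an application of Lemma~\ref{lem-backview} to a bona fide $O$-view of $s$.
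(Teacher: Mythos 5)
Your proposal is correct and takes essentially the same route as the paper's proof: Proposition \ref{prop-back2} localizes the backtracking level of each $O$-move in the $O$-view of the corresponding initial segment of $s^{\medtriangledown}$, Proposition \ref{prop-gadgetvis} identifies those views as gadget transforms of $O$-views of $s$, and Lemma \ref{lem-backview} then yields the bound $d-2$. Your explicit prefix argument for the views terminating in $\off$ (which are not literally of the form $t^{\medtriangledown}$, since the level of a move depends only on the play up to that move) is in fact slightly more careful than the paper, which invokes Lemma \ref{lem-backview} for those views directly.
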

\begin{proof}
By Definition \ref{def-gadget}
$$s^{\medtriangledown}=  \bullet_{{0}}\, \circ_{{0}}\, \off\; \on\, \bullet_{{1}}\, \circ_{{1}}\, \off\;\on  \ldots \bullet_{{k}}\, \circ_{k}\, \off\;\on\, \bullet_{k+1}$$
By Proposition \ref{prop-gadgetvis}, the $O$-views of the initial segments of $s^{\triangledown}$ that we have to consider are of the form
$$\begin{aligned} 
&\oview{(\bullet_{{0}}\, \circ_{{0}}\, \off\;\on \ldots \bullet_{{j}}\, \circ_{j}\, \off\,\on\, \bullet_{j+1})}\\
= &\,\bullet_{i_{1}}\, \circ_{j_{1}}\, \off\;\on \ldots \bullet_{{i_{m}}}\, \circ_{j_{m}}\, \off\,\on\, \bullet_{j+1}
%\\
%= &\mbox{init. seg. of }(\on\,\bullet_{i_{1}}\, \circ_{j_{1}}\, \off \ldots \on\,\bullet_{{j_{m}}}\, \circ_{j_{m}}\, \off\,\on\, \bullet_{j}\, \circ_{j}\, \off )
\end{aligned}$$
and 
$$\begin{aligned} 
&\oview{(\bullet_{{0}}\, \circ_{{0}}\, \off\;\on  \ldots \bullet_{{j}}\, \circ_{j}\, \off)}\\
= &\,\bullet_{{i_{1}}}\, \circ_{{j_{1}}}\, \off\;\on \ldots \bullet_{i_{m}}\, \circ_{j_{m}}\,\off
\end{aligned}
$$
with 
$$\oview{\bullet_{0}\,\circ_{0}\,\bullet_{1}\,\circ_{1}\ldots \bullet_{j}\,\circ_{j}}=\bullet_{i_{1}}\, \circ_{j_{1}}\ldots\bullet_{{j_{m}}}\, \circ_{j_{m}}$$
and thus, for $1\leq n\leq m$, the edges $(\bullet_{j_{n}}, \circ_{j_{n}})$ are in $s$ and so in $s^{\medtriangledown}$.
By Proposition \ref{prop-back2}, every inactivation chain in $s^{\medtriangledown}$ ending with an $O$-edge is already contained in the $O$-view of the initial segment of $s^{\medtriangledown}$ ending with that edge. %Therefore, the backtracking level of $O$ in $s^{\medtriangledown}$ is bounded by the backtracking levels of $O$ in the $O$-views of the initial segments of $s^{\medtriangledown}$. 
Moreover, by Lemma \ref{lem-backview},  every $O$-view of any initial segment of $s^{\medtriangledown}$ terminating with $\bullet_{j+1}$ or $\off$ is of backtracking level less than or equal to $d-2$.   We conclude that the backtracking level of $O$ in $s^{\medtriangledown}$ is less than or equal to $d-2$.
\end{proof}
We conclude by proving a strengthened version of Theorem \ref{thm-B}, which guarantees that the tower of exponentials defining the complexity of strategy interactions is never higher than the arena depth minus $2$.   

\begin{theorem}[Refined-Min-Backtracking-Theorem]\label{thm-B2}
Let $\sigma$ be a Player $n$-backtracking strategy bounded by $k$, $\tau$  an Opponent $m$-backtracking strategy bounded by $k$, $d\geq 2$ the depth of the arena $\mathbb{A}$ and $b=\mathsf{min}(n,m, d-2)$. Assume $s\in\sigma\star\tau$ and $s=s_{0}\, s_{1}\ldots s_{l}$. Then 
$$l\ \leq \ \underbrace{(2k)^{(2k)^{.^{.^{(2k)^{2k}}}}}}_{b+1}\leq  \underbrace{2^{2^{.^{.^{2^{2k(\log 2k)\cdot 2}}}}}}_{b+1}=2_{b}^{2k (\log 2k)\cdot 2} $$
\end{theorem}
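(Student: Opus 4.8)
The plan is to reduce everything to results already in hand, via a dichotomy governed by the comparison of $d-2$ with $\mathsf{min}(n,m)$. If $d-2 > \mathsf{min}(n,m)$, then $b=\mathsf{min}(n,m)$ and there is nothing genuinely new to do: Theorem \ref{thm-B} applied to $s\in\sigma\star\tau$ already gives $l\leq k_{b}^{k}\leq (2k)_{b}^{2k}$, the last step by monotonicity of the hyperexponential in its base. So the interesting regime is $d-2\leq \mathsf{min}(n,m)$, where $b=d-2$ and it is the arena depth, not the strategies' levels, that controls the tower's height. Here I would invoke the inactivation gadget of Definition \ref{def-gadget}: replace $s$ by $s^{\medtriangledown}$, bound the latter's length with the abstract Theorem \ref{thm-abstB}, and transfer the bound back, using that $s^{\medtriangledown}$ is just $s$ with dummy moves inserted, so if $l'$ is the last index of $s^{\medtriangledown}$ then $l\leq l'$ and it suffices to bound $l'$.

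Three facts about $s^{\medtriangledown}$ must then be established. First, $s^{\medtriangledown}$ is a visible play over $\mathbb{A}^{+}$: this is exactly Proposition \ref{prop-gadgetvis}, point 2. Second, every view of every initial segment of $s^{\medtriangledown}$ has length at most $2k$. This is the most computational step, but it follows mechanically from the view characterisations of Proposition \ref{prop-gadgetvis}, point 1: since every initial segment of $s$ lies in $\sigma$ or $\tau$ and both are bounded by $k$, the $P$- and $O$-views of initial segments of $s$ have length $\leq k$; the four equations of Proposition \ref{prop-gadgetvis}, point 1, together with a single append for segments ending in $\circ_{j}$, $\on$ or $\off$, then show that each view of $s^{\medtriangledown}$ arises from a view of $s$ of length $2m\leq k$ by turning every pair $\bullet_{i}\,\circ_{j}$ into a block $\bullet_{i}\,\circ_{j}\,\off\,\on$, hence has length $\leq 4m\leq 2k$ (while $\pview{s^{\medtriangledown}}=\pview{s}$ stays $\leq k$).

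Third, and crucially, the real backtracking level of $s^{\medtriangledown}$ is $\leq b=d-2$. For Opponent this is precisely Theorem \ref{thm-downback}, which gives $O$ backtracking level $\leq d-2$ in $s^{\medtriangledown}$. For Player I would show that the gadget cannot raise his level: the new moves $\on$ always point to the immediately preceding $\off$, so they have backtracking level $0$, whereas for an original move $\circ_{j}$ the $P$-view of the initial segment of $s^{\medtriangledown}$ ending at $\circ_{j}$ coincides, with its pointers, with the corresponding $P$-view in $s$ (Proposition \ref{prop-gadgetvis}, point 1, third equation, plus one append). Since by Proposition \ref{prop-back2} the backtracking level of a move is computed entirely inside that $P$-view, $\blev(\circ_{j})$ is the same in $s^{\medtriangledown}$ and in $s$, so Player's level in $s^{\medtriangledown}$ equals his level in $s$, which is $\leq n$ by Theorem \ref{thm-maxmin}. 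Hence the real backtracking level of $s^{\medtriangledown}$ is $\leq \mathsf{min}(n,d-2)=d-2=b$ (using $d-2\leq n$ in this regime), and Theorem \ref{thm-abstB} applied to $s^{\medtriangledown}$ with view-bound $2k$ yields $l\leq l'\leq (2k)_{b}^{2k}$.

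The final inequality $(2k)_{b}^{2k}\leq 2_{b}^{2k(\log 2k)\cdot 2}$ is the very hyperexponential estimate proved inside Theorem \ref{thm-abstB}, namely $a_{i}^{a}\leq 2_{i}^{(a+i)\log a}$, specialised to $a=2k$, $i=b$, together with $b\leq k$ (so $2k+b\leq 4k$ and $(2k+b)\log 2k\leq 2k(\log 2k)\cdot 2$). I expect the main obstacle to be not any single estimate but the recognition that the gadget must not be applied unconditionally: because $^{\medtriangledown}$ forces Opponent's level down to $d-2$ but can equally push a genuinely low Opponent level \emph{up} toward $d-2$, using it when $\mathsf{min}(n,m)<d-2$ would manufacture a tower that is too tall. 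The dichotomy above is exactly what confines the gadget to the regime $d-2\leq\mathsf{min}(n,m)$, where it can only help, while Theorem \ref{thm-B} dispatches the complementary regime; getting this interplay right, and carefully verifying both the $2k$ view-bound and the invariance of Player's level, is where the real care lies.
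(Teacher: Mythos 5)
Your proposal is correct and follows essentially the same route as the paper: the same dichotomy on whether $\mathsf{min}(n,m)<d-2$ (dispatching that regime via Theorem \ref{thm-B}), and in the other regime the same combination of the gadget $s^{\medtriangledown}$, Theorem \ref{thm-downback}, the $2k$ view bound from Proposition \ref{prop-gadgetvis}, and Theorem \ref{thm-abstB} applied to $s^{\medtriangledown}$ with $l\leq l'$. Your only deviation is the careful verification that Player's backtracking level is unchanged by the gadget, which is correct but superfluous: since the real backtracking level of a play is the \emph{minimum} of the two players' levels, Theorem \ref{thm-downback}'s bound on Opponent alone already caps the real level of $s^{\medtriangledown}$ at $d-2$, which is why the paper omits that step.
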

\begin{proof}
By Theorem \ref{thm-maxmin}, $s$ is visible and of real backtracking level $b$. If $n< d-2$ or $m<d-2$, the thesis follows by Theorem \ref{thm-B}. Otherwise, by Theorem \ref{thm-downback}, $O$ is of backtracking level no greater than $d-2$ in $s^{\medtriangledown}$.  By Definition \ref{def-stratvisint} of $\sigma\star\tau$, every initial segment $s'$ of $s$ is in $\sigma$ or $\tau$ and since both strategies are bounded by $k$, $\pview{s'}\leq k$ and $\oview{s'}\leq k$. From the characterization of views of Proposition \ref{prop-gadgetvis}, it easily follows that for every initial segment $s'$ of $s^{\triangledown}$, it holds $\pview{s'}\leq 2k$ and $\oview{s'}\leq 2k$.  By Theorem \ref{thm-abstB} applied to $s^{\medtriangledown}$, and since $s$ is shorter than $s^{\medtriangledown}$, we obtain the thesis.
\end{proof}

\textbf{Concluding remarks.} Theorem \ref{thm-B2} can also be derived in another way. When the minimum among the backtracking levels of the strategies is smaller than the depth of the arena minus $2$, one uses Theorem  \ref{thm-abstB}. When it is not the case, one can use Coquand's Trimming technique; with little adjustments, our main calculations would work just fine and one would obtain the correct upper bounds. The techniques of this section, however, are sharper: they open the possibility of optimizing strategies. For example, if just a proper initial segment of a strategy has excessive backtracking level, using our techniques one may very well take the overall backtracking level even below the arena depth, thus obtaining better bounds.

Our bounds, moreover, are sharp. This easily follows from Clairambault's lower bounds \cite{Clairambaultjour} or any others. Indeed, whenever a strategy debate takes as many steps as computed by a tower of exponentials having height $d-2$, we know that the minimum among the backtracking levels of the players  must be at least $d-2$, so it can be taken to be  exactly $d-2$.\\

\textbf{Acknowledgments}. This work was inspired by interesting conversations with Stefan Hetzl and Daniel Weller on their expansion trees with cut \cite{HetzlWeller}, which provided my motivation for studying in depth game semantical debates between strategies. I would also like to thank the first for his academic support during my first year at TU Wien.

\end{document}